\numberwithin{equation}{section}
\newtheorem{theorem}{Theorem}[section]
\newtheorem{lemma}[theorem]{Lemma}
\newtheorem{corollary}[theorem]{Corollary}
\newtheorem{remark}[theorem]{Remark}
\newtheorem{definition}[theorem]{Definition}
\newcommand \N {\mathbb{N}} 
\newcommand \R {\mathbb{R}} 
\newcommand \ER {\mathrm{ER}}
\begin{document}

\title{Glauber dynamics on the Erd\H{o}s-R\'enyi random graph}

\author{F.\ den Hollander}
\address{Mathematical Institute, Leiden University, P.O.\ Box 9512,
2300 RA Leiden, The Netherlands.}
\email{denholla@math.leidenuniv.nl}

\author{O.\ Jovanovski}
\address{Bank of Montreal, 100 King Street West, Toronto, 
ON M5X 1A9 Canada.}
\email{oliver.jovanovski@bmo.com}

\begin{abstract}
We investigate the effect of disorder on the Curie-Weiss model with Glauber dynamics. 
In particular, we study metastability for spin-flip dynamics on the Erd\H{o}s-R\'enyi random 
graph $\ER_n(p)$ with $n$ vertices and with edge retention probability $p \in (0,1)$. Each 
vertex carries an Ising spin that can take the values $-1$ or $+1$. Single spins interact with 
an external magnetic field $h \in (0,\infty)$, while pairs of spins at vertices connected by an 
edge interact with each other with ferromagnetic interaction strength $1/n$. Spins flip according 
to a Metropolis dynamics at inverse temperature $\beta$. The standard Curie-Weiss model 
corresponds to the case $p=1$, because $\ER_n(1) = K_n$ is the complete graph on $n$ 
vertices. For $\beta>\beta_c$ and $h \in (0,p \chi(\beta p))$ the system exhibits \emph{metastable 
behaviour} in the limit as $n\to\infty$, where $\beta_c=1/p$ is the \emph{critical inverse temperature} 
and $\chi$ is a certain \emph{threshold function} satisfying $\lim_{\lambda\to\infty} \chi(\lambda) =1$ 
and $\lim_{\lambda \downarrow 1} \chi(\lambda)=0$. We compute the average crossover time 
from the \emph{metastable set} (with magnetization corresponding to the `minus-phase') to 
the \emph{stable set} (with magnetization corresponding to the `plus-phase'). We show that 
the average crossover time grows exponentially fast with $n$, with an exponent that is the 
same as for the Curie-Weiss model with external magnetic field $h$ and with ferromagnetic 
interaction strength $p/n$. We show that the correction term to the exponential asymptotics 
is a multiplicative error term that is \emph{at most polynomial} in $n$. For the complete 
graph $K_n$ the correction term is known to be a multiplicative constant. Thus, apparently, 
$\ER_n(p)$ is so homogeneous for large $n$ that the effect of the fluctuations in the disorder 
is small, in the sense that the metastable behaviour is controlled by the average of the disorder. 
Our model is the first example of a metastable dynamics on a random graph where the correction 
term is estimated to high precision. 
\end{abstract}

\keywords{Erd\H{o}s-R\'enyi random graph, Glauber spin-flip dynamics, metastability, crossover time.}
\subjclass[2010]{60C05; 60K35; 60K37; 82C27}
\thanks{The research in this paper was supported by NWO Gravitation Grant 024.002.003-NETWORKS}

\date{June 29, 2020}

\maketitle

\small
\tableofcontents
\normalsize


\section{Introduction and main results}

In Section~\ref{ss:background} we provide some background on metastability. In 
Section~\ref{ss:model} we define our model: spin-flip dynamics on the 
Erd\H{o}s-R\'enyi random graph $\ER_n(p)$. In Section~\ref{ss:metpair} we identify 
the metastable pair for the dynamics, corresponding to the `minus-phase' and the 
`plus-phase', respectively. In Section~\ref{ss:perturbedCW} we recall the definition 
of spin-flip dynamics on the complete graph $K_n$, which serves as a comparison 
object, and recall what is known about the average metastable crossover time for 
spin-flip dynamics on $K_n$ (Theorem~\ref{thm: class CW result} below). In 
Section~\ref{ss:metatheorems} we transfer the sharp asymptotics for $K_n$ to a 
somewhat rougher asymptotics for $\ER_n(p)$ (Theorem~\ref{thm:metER} below). 
In Section~\ref{ss:disc} we close by placing our results in the proper context and 
giving an outline of the rest of the paper.


\subsection{Background}
\label{ss:background}

Interacting particle systems, evolving according to a \emph{Metropolis dynamics} 
associated with an energy functional called the \emph{Hamiltonian}, may end up 
being trapped for a long time near a state that is a local minimum but not a global 
minimum. The deepest local minima are called \emph{metastable states}, the 
global minimum is called the \emph{stable state}. The transition from a metastable 
state to the stable state marks the relaxation of the system to equilibrium. To describe 
this relaxation, it is of interest to compute the crossover time and to identify the set 
of critical configurations the system has to visit in order to achieve the transition. 
The critical configurations represent the saddle points in the energy landscape:
the set of mini-max configurations that must be hit by any path that achieves the 
crossover.

Metastability for interacting particle systems on \emph{lattices} has been 
studied intensively in the past three decades. Various different approaches 
have been proposed, which are summarised in the monographs by Olivieri 
and Vares~\cite{OV05}, Bovier and den Hollander~\cite{BdH15}. Recently, 
there has been interest in metastability for interacting particle systems on 
\emph{random graphs}, which is much more challenging because the crossover 
time typically depends in a delicate manner on the realisation of the graph.

In the present paper we are interested in metastability for spin-flip dynamics 
on the \emph{Erd\H{o}s-R\'enyi random graph}. Our main result is an estimate 
of the average crossover time from the `minus-phase'  to the `plus-phase' 
when the spins feel an external magnetic field at the vertices in the graph 
as well as a ferromagnetic interaction along the edges in the graph. Our paper
is part of a larger enterprise in which the goal is to understand metastability 
on graphs. Jovanovski~\cite{J17} analysed the case of the \emph{hypercube}, 
Dommers~\cite{D17} the case of the \emph{random regular graph}, Dommers, 
den Hollander, Jovanovski and Nardi~\cite{DdHJN17} the case of the 
\emph{configuration model}, and den Hollander and Jovanovski~\cite{dHJ17} 
the case of the \emph{hierarchical lattice}. Each case requires carrying out a 
detailed combinatorial analysis that is model-specific, even though the metastable 
behaviour is ultimately universal. For lattices like the hypercube and the hierarchical 
lattice a full identification of the relevant quantities is possible, while for random 
graphs like the random regular graph and the configuration model so far only 
the communication height is well understood, while the set of critical configurations 
and the prefactor remain somewhat elusive.    

The equilibrium behaviour of the Ising model on random graphs is well understood.
See e.g.\ Dommers, Giardin\`a and van der Hofstad~\cite{DGvdH10}, \cite{DGvdH14}.


\subsection{Spin-flip dynamics on $\ER_n(p)$}
\label{ss:model}

Let $\ER_n(p)=\left(V,E\right)$ be a realisation of the Erd\H{o}s-R\'enyi random graph 
on $\left|V\right|=n\in\N$ vertices with edge retention probability $p\in(0,1)$, i.e., each 
edge in the complete graph $K_{n}$ is present with probability $p$ and absent with 
probability $1-p$, independently of other edges (see Fig.~\ref{fig:ERRG}). We write
$\mathbb{P}_{\ER_n(p)}$ to denote the law of $\ER_n(p)$. For typical properties of $\ER_n(p)$, 
see van der Hofstad~\cite[Chapters 4--5]{vdH17}. 

\begin{figure}[htbp]
\includegraphics[width=.2\textwidth]{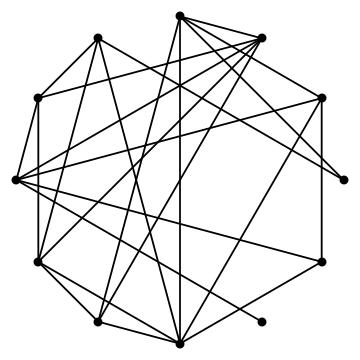}
\caption{A realization of the Erd\H{o}s-R\'enyi random graph with $n=12$ and $p=\tfrac13$.}
\label{fig:ERRG}
\end{figure}

\noindent
Each vertex carries an Ising spin that can take the values $-1$ or $+1$. Let $S_{n}
=\left\{-1,+1\right\}^{V}$ denote the set of spin configurations on $V$, and let $H_{n}$ 
be the \emph{Hamiltonian} on $S_{n}$ defined by
\begin{equation}
H_{n}\left(\sigma\right)=-\frac{1}{n}\sum_{\left(v,w\right)\in E}
\sigma(v)\sigma(w) -h\sum_{v\in V}\sigma(v),\quad\sigma\in S_{n}.
\label{eq:Hn}
\end{equation}
In other words, single spins interact with an \emph{external magnetic field} $h \in (0,\infty)$, while 
pairs of neighbouring spins interact with each other with a \emph{ferromagnetic coupling strength} 
$1/n$. 

Let $\boxminus=\left\{-1\right\} ^{V}$ and $\boxplus=\left\{+1\right\} ^{V}$ denote the 
configuration where all spins are $-1$, respectively, $+1$. Since 
\begin{equation}
H_{n}\left(\boxminus\right)=-\frac{\left|E\right|}{n}+hn,
\label{eq:H boxminus}
\end{equation}
we have the geometric representation
\begin{equation}
H_{n}\left(\sigma\right)=H_{n}\left(\boxminus\right)
+\frac{2}{n}\left|\partial_{E}\sigma\right|
-2h\left|\sigma\right|,\quad\sigma\in S_{n},
\label{eq:Hn as boundary size}
\end{equation}
where 
\begin{equation}
\partial_{E}\sigma=\left\{ \left(v,w\right)\in E\colon \sigma (v) = -\sigma (w) = +1\right\} 
\end{equation}
is the \emph{edge-boundary} of $\sigma$ and 
\begin{equation}
\left|\sigma\right|=\left\{ v\in \ER_n(p)\colon\,\sigma(v)=+1\right\} 
\end{equation}
is the \emph{vertex-volume} of $\sigma$. 

In the present paper we consider a spin-flip dynamics on $S_{n}$ commonly referred to as \emph{Glauber 
dynamics}, defined as the continuous-time Markov process with transition rates 
\begin{equation}
r\left(\sigma,\sigma\right)=
\begin{cases}
e^{-\beta [H_{n}(\sigma')-H_{n}(\sigma)]_{+}}, 
& \mbox{if }\left\Vert \sigma-\sigma'\right\Vert =2,\\
0, & \mbox{if }\left\Vert \sigma-\sigma'\right\Vert >2,
\end{cases}
\quad \sigma,\sigma' \in S_{n},
\label{eq:rate r}
\end{equation}
where $\|\cdot\|$ is the $\ell_1$-norm on $S_{n}$. This dynamics has as \emph{reversible} 
stationary distribution the Gibbs measure 
\begin{equation}
\label{eq:mu}
\mu_n\left(\sigma\right)=\frac{1}{Z_n} e^{-\beta H_{n}(\sigma)}, 
\quad \sigma \in S_{n},
\end{equation}
where $\beta \in (0,\infty)$ is the \emph{inverse temperature} and $Z_n$ is the 
normalizing partition sum. We write
\begin{equation}
\label{eq:ERdyn}
\left\{\xi_{t}\right\} _{t\geq0}
\end{equation} 
to denote the path of the random dynamics and $\mathbb{P}_\xi$ to denote its law given $\xi_0=\xi$. 
For $\chi \subset S_n$, we write
\begin{equation}
\tau_\chi = \inf\{t \geq 0\colon\, \xi_t \in \chi,\,\xi_{t-} \notin \chi\}.
\end{equation} 
to denote the \emph{first hitting/return time} of $\chi$.

We define the \emph{magnetization} of $\sigma$ by 
\begin{equation}
m\left(\sigma\right)=\frac{1}{n}\sum_{v\in V}\sigma(v),
\label{eq:magnetization}
\end{equation}
and observe the relation 
\begin{equation}
m\left(\sigma\right)=\frac{2\left|\sigma\right|}{n}-1, \quad \sigma \in S_{n}.
\label{eq:mag-vol-relat}
\end{equation}
We will frequently switch between working with volume and working with magnetization. 
Equation (\ref{eq:mag-vol-relat}) ensures that these are in one-to-one correspondence. 
Accordingly, we will frequently look at the dynamics from the perspective of the induced 
\emph{volume process} and \emph{magnetization process},
\begin{equation}
\label{eq:volmag}
\left\{\left|\xi_{t}\right|\right\}_{t\geq0}, \quad \left\{m(\xi_{t})\right\}_{t\geq0},
\end{equation} 
which are \emph{not} Markov.


\subsection{Metastable pair}
\label{ss:metpair}

For fixed $n$, the Hamiltonian in \eqref{eq:Hn} achieves a \emph{global minimum} at $\boxplus$ 
and a \emph{local minimum} at $\boxminus$. In fact, $\boxminus$ is the deepest local minimum 
not equal to $\boxplus$ (at least for $h$ small enough). However, in the limit as $n\to\infty$, these 
do \emph{not} form a metastable pair of configurations because \emph{entropy} comes into play. 

\begin{definition}[{\bf Metastable regime}]
\label{def:metreg} 
\rm
The parameters $\beta,h$ are said to be in the \emph{metastable regime} when
\begin{equation}
\label{eq:metreg}
\beta \in (1/p,\infty), \qquad h \in \big(0,p \chi(\beta p)\big)
\end{equation}
with (see Fig.~\ref{fig:threshold})
\begin{equation}
\label{eq:chidef}
\chi(\lambda) = \sqrt{1-\tfrac{1}{\lambda}} 
- \tfrac{1}{2\lambda} \log\left[\lambda\left(1+\sqrt{1-\tfrac{1}{\lambda}}\,\right)^{2}\right],
\qquad \lambda \geq 1.
\end{equation}

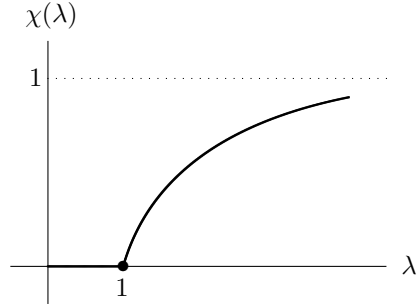
\begin{figure}[htbp]
\begin{center}
\setlength{\unitlength}{0.5cm}
\begin{picture}(10,6)(0,1)
\put(-1,0){\line(10,0){10}}
\put(0,-1){\line(0,7){7}}
\qbezier[40](0,5)(4.5,5)(9,5)
{\thicklines
\qbezier(0,0)(1,0)(2,0)
\qbezier(2,0)(3,3.5)(8,4.5)
}
\put(-.5,4.8){$1$}
\put(1.8,-.8){$1$}
\put(-.6,6.5){$\chi(\lambda)$}
\put(9.4,-.2){$\lambda$}
\put(2,0){\circle*{.3}}
\end{picture}
\end{center}
\vspace{1cm}
\caption{Plot of $\lambda\mapsto\chi(\lambda)$.}
\label{fig:threshold}
\end{figure}

\noindent
We have $\lim_{\lambda\to\infty} \chi(\lambda) = 1$ and $\lim_{\lambda \downarrow 1} \chi(\lambda) = 0$ 
(with slope $\tfrac12$). Hence, for $\beta \to \infty$ any $h \in (0,p)$ is metastable, while for $\beta 
\downarrow 1/p$ or $p \downarrow 0$ no $h \in (0,\infty)$ is metastable. The latter explains why we do 
not consider the non-dense Erd\H{o}s-R\'enyi random graph with $p=p_{n} \downarrow 0$ as $n\to\infty$.
\hfill$\blacksquare$
\end{definition}

\noindent
The threshold $\beta_c=1/p$ is the \emph{critical temperature}: the static model has a phase transition
at $h=0$ when $\beta>\beta_c$ and no phase transition when $\beta \leq \beta_c$ (see e.g.\ Dommers,
Giardin\`a and van der Hofstad~\cite{DGvdH14}).  

To define the proper metastable pair of configurations, we need the following definitions. Let 
\begin{equation}
\label{eq:InJndef}
\Gamma_n = \{-1,-1+\tfrac{2}{n},\ldots,1-\tfrac{2}{n},1\}, \quad
I_n(a) = - \frac{1}{n} \log \binom{n}{\tfrac{1+a}{2}n}, \quad
J_n(a) = 2\beta(pa+h) - 2 I'_n(a).
\end{equation} 
Define
\begin{equation}
\begin{aligned}
\mathbf{m}_n &= \min\left\{a \in \Gamma_n\colon\, J_n(a) \leq 0\right\},\\
\mathbf{t}_n &= \min\left\{a\in \Gamma_n\colon\, a>\mathbf{m}_n,\, J_n(a) \geq 0\right\},\\
\mathbf{s}_n &= \min\left\{a\in \Gamma_n\colon\, a>\mathbf{t}_n,\, J_n(a) \leq 0\right\}.
\end{aligned}
\label{eq: mts}
\end{equation} 
The numbers in the left-hand side of \eqref{eq: mts} play the role of magnetizations; it will be made clear below that when in the metastable regime, these quantities are well defined. 
Further define 
\begin{equation}
\label{eq:MTSdef}
\mathbf{M}_n= \frac{n}{2}(\mathbf{m}_n+1), \quad \mathbf{T}_n= \frac{n}{2}(\mathbf{t}_n+1), 
\quad \mathbf{S}_n= \frac{n}{2}(\mathbf{s}_n+1),
\end{equation}  
which are the volumes corresponding to \eqref{eq: mts}, and
\begin{equation}
A_k =\left\{ \sigma\in S_{n}\colon\,\left|\sigma\right|=k\right\},
\qquad k \in \{0,1,\ldots,n-1,n \},
\label{eq:def Ak}
\end{equation}
the set of configurations with volume $k$. Define
\begin{equation}
\label{eq:potential pre}
R_n(a)=-\frac{1}{2}pa^{2}-ha+\frac{1}{\beta}I_n(a)
\end{equation}
and note that
\begin{eqnarray}
\label{eq:diffR=J pre}
R'_n(a) = -pa-h+\frac{1}{\beta}I'_n(a) = - \frac{1}{2\beta} J_n(a).
\end{eqnarray}
The motivation behind the definitions in \eqref{eq:InJndef}, \eqref{eq: mts} and \eqref{eq:potential pre} will 
become clear in Section~\ref{s:preprep}. Via Stirling's formula it follows that
\begin{equation}
\label{eq:Stirling}
J_n(a) = 2\beta(pa+h) +\log\left(\frac{1-a+\frac{1}{n}}{1+a+\frac{1}{n}}\right) + O(n^{-2}), 
\quad a \in \Gamma_n.
\end{equation}  
We will see that, in the limit as $n\to\infty$ when $(\beta,h)$ is in the metastable regime defined by 
\eqref{eq:metreg}, the numbers in \eqref{eq: mts} are well-defined: $A_{\mathbf{M}_n}$ is the 
\emph{metastable set}, $A_{\mathbf{S}_n}$ is the \emph{stable set}, $A_{\mathbf{T}_n}$ is the 
\emph{top set}, i.e., the set of saddle points that lie in between $A_{\mathbf{M}_n}$ and $A_{\mathbf{S}_n}$. 
Our key object of interest will be the \emph{crossover time} from $A_{\mathbf{M}_n}$ to $A_{\mathbf{S}_n}$ 
via $A_{\mathbf{T}_n}$. 

Note that 
\begin{equation}
\label{eq:InJnlim}
\Gamma_n \to [-1,1], \quad I_n(a) \to I(a), \quad J_n(a) \to J_{p,\beta,h}(a), \qquad n\to\infty, 
\end{equation}
with 
\begin{equation}
\label{Jadef}
J_{p,\beta,h}(a) = 2\beta(pa+h) + \log\left(\frac{1-a}{1+a}\right)
\end{equation}
and
\begin{equation}
\label{Idef}
I(a) = \frac{1-a}{2} \log \left(\frac{1-a}{2}\right) + \frac{1+a}{2} \log \left(\frac{1+a}{2}\right).
\end{equation}
Accordingly, 
\begin{equation}
\mathbf{m}_n \to \mathbf{m}, \quad \mathbf{t}_n \to \mathbf{t}, \quad \mathbf{s}_n \to \mathbf{s}, 
\qquad n\to\infty,
\end{equation}
with $\mathbf{m}$, $\mathbf{t}$, $\mathbf{s}$ the three successive zeroes of $J$ (see Fig.~\ref{fig:CWfe} 
and recall \eqref{eq: mts}). Define 
\begin{equation}
\label{eq:potential}
R_{p,\beta,h}(a)=-\frac{1}{2}pa^{2}-ha+\frac{1}{\beta}I(a)
\end{equation}
Note that $R_{p,\beta,h}(a)$ plays the role of \emph{free energy}: $-\frac{1}{2}pa^{2}-ha$ and $\frac{1}{\beta} I(a)$ 
represent the energy, respectively, entropy at magnetisation $a$. Note that $I(a)$ equals the relative 
entropy of the probability measure $\tfrac12(1+a)\delta_{+1} + \tfrac12(1-a)\delta_{-1}$ with respect to 
the counting measure $\delta_{+1} + \delta_{-1}$. Also note that 
\begin{eqnarray}
\label{eq:diffR=J}
R'_{p,\beta,h}(a) = -pa-h+\frac{1}{\beta}I'(a) = - \frac{1}{2\beta} J_{p,\beta,h}(a).
\end{eqnarray}

\begin{remark}
\label{rem:hbounds} \rm 
As shown in Corollary~\ref{cor:pleqh} below, if $h \in (p,\infty)$, then (\ref{eq:rate r}) leads to 
\emph{non-metastable} behaviour where the dynamics `drifts' through a sequence of configurations 
with volume growing from $\mathbf{M}$ to $\mathbf{S}$ within time $O(1)$. \hfill$\blacksquare$
\end{remark}


\subsection{Metastability on $K_n$}
\label{ss:perturbedCW}

\begin{figure}[htbp]
\includegraphics[width=.2\textwidth]{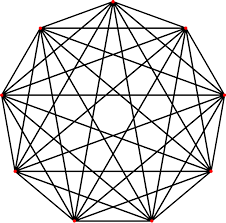}
\caption{The complete graph with $n=9$.}
\label{fig:complete}
\end{figure}

Let $K_n$ be the complete graph on $n$ vertices (see Fig.~\ref{fig:complete}). Spin-flip 
dynamics on $K_{n}$, commonly referred to as \emph{Glauber dynamics for the 
Curie-Weiss model}, is defined as in Section~\ref{ss:model} but with the \emph{Curie-Weiss 
Hamiltonian} 
\begin{equation}
\label{eq:CW}
H_{n}(\sigma)=-\frac{1}{2n} \sum_{1\leq i,j\leq n}\sigma(i)\sigma(j)
-h\sum_{1\leq i\leq n}\sigma(i),\quad\sigma\in S_{n}.
\end{equation}
This is the special case of \eqref{eq:Hn} when $p=1$, except for the diagonal term $-\frac{1}{2n}
\sum_{1 \leq i \leq n}\sigma(i)\sigma(i)=-\tfrac12$, which shifts $H_{n}$ by a constant and has no 
effect on the dynamics. The advantage of (\ref{eq:CW}) is that we may write
\begin{equation}
H_{n}(\sigma) = n \big[-\tfrac12 m(\sigma)^2-hm(\sigma)\big],
\end{equation}
which shows that the energy is a function of the magnetization only, i.e., the Curie-Weiss
model is a \emph{mean-field} model. Clearly, this property fails on $\ER_n(p)$.

For the Curie-Weiss model it is known that there is a \emph{critical inverse temperature} $\beta_c 
= 1$ such that, for $\beta > \beta_c$, $h$ small enough and in the limit as $n\to\infty$, the stationary 
distribution $\mu_n$ given by \eqref{eq:mu} and \eqref{eq:CW} has two phases: the `minus-phase', 
where the majority of the spins are $-1$, and the `plus-phase', where the majority of the spins 
are $+1$. These two phases are the \emph{metastable state}, respectively, the \emph{stable state} 
for the dynamics. In the limit as $n\to\infty$, the dynamics of the magnetization introduced in 
\eqref{eq:volmag} (which is Markov) converges to a Brownian motion on $[-1,+1]$ in the 
\emph{double-well potential} $a \mapsto R_{1,\beta,h}(a)$ (see Fig.~\ref{fig:CWfe}). 

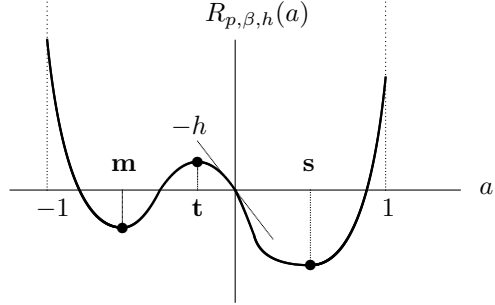
\begin{figure}[htbp]
\begin{center}
\setlength{\unitlength}{0.5cm}
\begin{picture}(10,6)(-4.5,-.4)
\put(-6,0){\line(12,0){12}}
\put(0,-3){\line(0,7){7}}
\qbezier[30](2,0)(2,-1)(2,-2)
\qbezier[30](-3,0)(-3,-.5)(-3,-1)
\qbezier[20](-1,0)(-1,.5)(-1,.75)
\qbezier[50](4,0)(4,2)(4,5)
\qbezier[50](-5,0)(-5,2)(-5,5)
\qbezier[80](1,-1.3)(0,0)(-1,1.3)
{\thicklines
\qbezier(.5,-1.2)(.7,-2)(2,-2)
\qbezier(.5,-1.2)(.2,-.4)(0,0)
\qbezier(2,-2)(3.5,-2)(4,3)
\qbezier(-3,-1)(-2.5,-1)(-2,0)
\qbezier(-3,-1)(-4.5,-1)(-5,4)
\qbezier(0,0)(-1,1.5)(-2,0)
}
\put(6.5,-.1){$a$}
\put(-.8,4.5){$R_{p,\beta,h}(a)$}
\put(-3.3,.5){$\mathbf{m}$}
\put(1.8,.5){$\mathbf{s}$}
\put(-1.15,-.7){$\mathbf{t}$}
\put(-1.7,1.5){$-h$}
\put(3.9,-.7){$1$}
\put(-5.3,-.7){$-1$}
\put(-1,.75){\circle*{.3}}
\put(2,-2){\circle*{.3}}
\put(-3,-1){\circle*{.3}}
\end{picture}
\end{center}
\vspace{1cm}
\caption{Plot of $R_{p,\beta,h}(a)$ as a function of the magnetization $a$. The metastable set 
$A_{\mathbf{M}}$ has magnetization $\mathbf{m}<0$, the stable set $A_{\mathbf{S}}$ 
has magnetization $\mathbf{s}>0$, the top set has magnetization $\mathbf{t}<0$. Note
that $R_{p,\beta,h}(-1) = -\tfrac12p+h$, $R_{p,\beta,h}(0) = - \beta^{-1}\log 2$, $R_{p,\beta,h}(+1) 
= -\tfrac12 p-h$ and $R_{p,\beta,h}'(-1) = -\infty$, $R_{p,\beta,h}'(0) = - h$, $R_{p,\beta, h}'(+1) = \infty$.}
\label{fig:CWfe}
\end{figure}

The following theorem can be found in Bovier and den Hollander~\cite[Chapter 13]{BdH15}.
For $p=1$, the metastable regime in \eqref{eq:metreg} becomes
\begin{equation}
\label{eq:metregCW}
\beta \in (1,\infty), \qquad h \in \big(0,\chi(\beta)\big).
\end{equation}

\begin{theorem}[{\bf Average crossover time  on $K_n$}]
\label{thm: class CW result}
Subject to \eqref{eq:metregCW}, as $n\to\infty$, uniformly in $\xi \in A_{\mathbf{M}_n}$, 
\begin{equation}
\mathbb{E}_{\xi}\left[\tau_{A_{\mathbf{S}_n}}\right]
= [1+o_n(1)]\,\frac{\pi}{1+\mathbf{t}} \sqrt{\frac{1-\mathbf{t}^{2}}{1-\mathbf{m}^{2}}}
\frac{1}{\beta\sqrt{R_{1,\beta,h}''(\mathbf{m})[-R_{1,\beta,h}''(\mathbf{t})]}}
\,e^{\beta n [R_{1,\beta,h}(\mathbf{t})-R_{1,\beta,h}(\mathbf{m})]}.
\end{equation}
\end{theorem}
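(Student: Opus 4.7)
The plan is to exploit the mean-field structure of $K_n$. Because the Curie-Weiss Hamiltonian in \eqref{eq:CW} depends on $\sigma$ only through $m(\sigma)$, the volume process $\{|\xi_t|\}_{t\geq 0}$ is itself a birth-and-death Markov chain on $\{0,1,\ldots,n\}$, with birth rate $\hat b_n(k)=(n-k)\,e^{-\beta[H_n(A_{k+1})-H_n(A_k)]_+}$, death rate $\hat d_n(k)=k\,e^{-\beta[H_n(A_{k-1})-H_n(A_k)]_+}$, and reversible measure $\hat\mu_n(k)=\binom{n}{k}e^{-\beta H_n(A_k)}/Z_n$. Stirling's formula gives $\hat\mu_n(k)\sim Z_n^{-1}\sqrt{2/(\pi n(1-a^2))}\,e^{-\beta n R_{1,\beta,h}(a)}$ with $a=2k/n-1$, so the problem reduces to a sharp hitting-time estimate for a one-dimensional reversible chain.

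I would then invoke the standard potential-theoretic identity (see \cite[Ch.~7]{BdH15})
\begin{equation}
\mathbb{E}_{\nu}[\tau_{A_{\mathbf{S}_n}}]
=\frac{\sum_{\eta\in S_n}\mu_n(\eta)\,h_{A_{\mathbf{M}_n},A_{\mathbf{S}_n}}(\eta)}{\mathrm{cap}(A_{\mathbf{M}_n},A_{\mathbf{S}_n})},
\end{equation}
where $\nu$ is the harmonic measure on $A_{\mathbf{M}_n}$ and $h_{\cdot,\cdot}$ is the equilibrium potential. For the one-dimensional volume chain the capacity admits the exact series representation
\begin{equation}
\mathrm{cap}(A_{\mathbf{M}_n},A_{\mathbf{S}_n})
=\left(\sum_{k=\mathbf{M}_n}^{\mathbf{S}_n-1}\frac{1}{\hat\mu_n(k)\,\hat b_n(k)}\right)^{-1},
\end{equation}
so no test-flow construction is required — the Dirichlet and Thomson bounds coincide.

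Next I would apply Laplace's method to both the capacity sum and the numerator. The capacity sum concentrates in an $O(\sqrt{n})$-window around $k=\mathbf{T}_n$: expanding $R_{1,\beta,h}$ quadratically around $\mathbf{t}$ (where $R'_{1,\beta,h}(\mathbf{t})=0$ and $R''_{1,\beta,h}(\mathbf{t})<0$) produces a Gaussian integral whose width, combined with the Stirling prefactor at the saddle and the saddle rate $\hat b_n(\mathbf{T}_n)$, cancels all powers of $n$ outside the exponent $e^{\beta n R_{1,\beta,h}(\mathbf{t})}$. A parallel Laplace expansion around $\mathbf{m}$ treats $\mu_n(A_{\mathbf{M}_n})$. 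A standard one-dimensional Dirichlet bound then shows that $h_{A_{\mathbf{M}_n},A_{\mathbf{S}_n}}(\eta)=1+o(1)$ on the metastable well $\{k\colon \mathbf{M}_n\leq k<\mathbf{T}_n\}$ and is $o(1)$ past the saddle, so the numerator equals $[1+o(1)]\,\mu_n(A_{\mathbf{M}_n})$. Dividing and simplifying yields the stated formula. To promote the harmonic-measure average to uniformity over $\xi\in A_{\mathbf{M}_n}$, I would use vertex-permutation symmetry together with a brief coupling inside the metastable well to show that the starting configuration is forgotten on a timescale polynomial in $n$, which is negligible against $e^{\beta n[R_{1,\beta,h}(\mathbf{t})-R_{1,\beta,h}(\mathbf{m})]}$.

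The main obstacle is the Laplace bookkeeping. One must track how the Stirling prefactors $\sqrt{1/(n(1-\mathbf{t}^2))}$ and $\sqrt{1/(n(1-\mathbf{m}^2))}$, the Gaussian widths $\sqrt{1/(\beta n R''_{1,\beta,h})}$, and the saddle rate $\hat b_n(\mathbf{T}_n)\sim \tfrac12 n(1-\mathbf{t})$ combine so that every power of $n$ cancels and the specific geometric constants $\pi/(1+\mathbf{t})$, $\sqrt{(1-\mathbf{t}^2)/(1-\mathbf{m}^2)}$, and $1/\beta\sqrt{R''_{1,\beta,h}(\mathbf{m})\,[-R''_{1,\beta,h}(\mathbf{t})]}$ emerge exactly as written. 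The potential-theoretic framework is essentially automatic once the one-dimensional reduction is made; the model-specific geometry lives entirely in this arithmetic verification.
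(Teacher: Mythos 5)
The paper does not prove Theorem~\ref{thm: class CW result}; it cites it from \cite[Chapter 13]{BdH15}, and what appears in Section~\ref{s:proofmettheorem} (Step~2) is only the short accounting needed to see how perturbing $h$ by $O(\log n/n)$ shifts the prefactor. Your reconstruction follows the same route as that reference: reduce to the one-dimensional birth-and-death chain for $|\xi_t|$ (possible precisely because the Curie-Weiss Hamiltonian is a function of the magnetization alone), express the expected hitting time through the mean-hitting-time formula from potential theory, use the exact one-dimensional capacity series, and extract the prefactor via Stirling plus Gaussian/Laplace expansions around $\mathbf{t}$ and $\mathbf{m}$. The ingredients and their roles line up with the paper's Eqs.~(13.2.4)--(13.2.11) as quoted in Step~2 (the factor $2/(1-\mathbf{t})$ is the saddle rate you call $\hat b_n(\mathbf{T}_n)$, the $\sqrt{(1-\mathbf{t}^2)/(1-\mathbf{m}^2)}$ comes from Stirling via $R_n-R$, and the $\pi/(\beta\sqrt{R''(\mathbf{m})[-R''(\mathbf{t})]})$ is the Gaussian width product).

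One small simplification you are entitled to but did not use: for Curie-Weiss the law of $\{|\xi_t|\}_{t\geq 0}$ started from any $\xi\in A_{\mathbf{M}_n}$ is \emph{identical} (not just asymptotically so) by permutation symmetry of the vertices, so $\mathbb{E}_\xi[\tau_{A_{\mathbf{S}_n}}]$ is a constant on $A_{\mathbf{M}_n}$. The coupling you invoke to pass from harmonic-measure average to uniformity in $\xi$ is therefore unnecessary here; uniformity is exact. (That coupling machinery \emph{is} essential in the paper for $\ER_n(p)$, where the volume process is not Markov and the starting configuration genuinely matters.) Apart from this, the proposal is sound and consistent with the cited source.
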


\noindent
Fig.~\ref{fig:CWfe} illustrates the setting: the average crossover time from $A_{\mathbf{M}_n}$ to 
$A_{\mathbf{S}_n}$ depends on the energy barrier $R_{1,\beta,h}(\mathbf{t})-R_{1,\beta,h}(\mathbf{m})$ 
and on the curvature of $R_{1,\beta,h}$ at $\mathbf{m}$ and $\mathbf{t}$. Note that $\mathbf{m},\mathbf{s},
\mathbf{t}$ in Fig.~\ref{fig:CWfe} are the limits as $n\to\infty$ of $\mathbf{m}_n,\mathbf{s}_n,\mathbf{t}_n$
defined in \eqref{eq: mts} for $p=1$.


\subsection{Metastability on $\ER_n(p)$}
\label{ss:metatheorems}

Unlike for the spin-flip dynamics on $K_n$, the induced processes defined in \eqref{eq:volmag}
are \emph{not Markovian}. This is due to the random geometry of $\ER_n(p)$. However, we will 
see that they are \emph{almost Markovian}, a fact that we will exploit by comparing the dynamics 
on $\ER_n(p)$ with that on $K_n$, but with a ferromagnetic coupling strength $p/n$ rather than 
$1/n$ and with an external magnetic field that is a \emph{small perturbation} of $h$. 

As shown in Lemma~\ref{lem:doublewell} below, in the metastable regime the function $a \mapsto 
R_{p}(a)$ has a double-well structure just like in Fig.~\ref{fig:CWfe}, so that the metastable state 
$A_{\mathbf{M}}$ and the stable state $A_{\mathbf{S}}$ are separated by an \emph{energy barrier} 
represented by $A_{\mathbf{T}}$. 

We are finally in a position to state our main theorem.
\begin{theorem}[{\bf Average crossover time on $\ER_n(p)$}]
\label{thm:metER}
Subject to \eqref{eq:metregCW}, with $\mathbb{P}_{\ER_n(p)}$-probability tending to $1$ as $n\to\infty$,
and uniformly in $\xi \in A_{\mathbf{M}_n}$, 
\begin{equation}
\label{CWERasymp}
\mathbb{E}_{\xi}\left[\tau_{A_{\mathbf{S}_n}}\right]
= n^{E_n}\,e^{\beta n [R_{p,\beta,h}(\mathbf{t})-R_{p,\beta,h}(\mathbf{m})]}
\end{equation}
where the random exponent $E_n$ satisfies
\begin{equation}
\label{eq:expbd}
\lim_{n\to\infty} \mathbb{P}_{\ER_n(p)}\big(|E_n| \leq \beta (\mathbf{t}-\mathbf{m})\tfrac{11}{6}\big)=1.
\end{equation}
\end{theorem}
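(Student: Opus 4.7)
The plan is to follow the potential-theoretic approach to metastability, treating the Curie--Weiss dynamics of Section~\ref{ss:perturbedCW} with coupling constant $p/n$ (in place of $1/n$) and field $h$ as a comparison object. I would start from the Bovier--den Hollander representation
\begin{equation*}
\mathbb{E}_{\xi}[\tau_{A_{\mathbf{S}_n}}] = [1+o_n(1)]\,\frac{\mu_n(\mathcal{V}_{\mathbf{M}_n})}{\mathrm{cap}(A_{\mathbf{M}_n}, A_{\mathbf{S}_n})},
\end{equation*}
valid uniformly in $\xi\in A_{\mathbf{M}_n}$, where $\mathcal{V}_{\mathbf{M}_n}$ is the metastable valley. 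The task reduces to estimating the valley mass and the capacity separately, each up to a random multiplicative factor of polynomial order in $n$.

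The engine of the comparison is the pointwise decomposition
\begin{equation*}
H_n(\sigma) = H_n^{\mathrm{CW},p}(\sigma) + \Delta_n(\sigma), \qquad
\Delta_n(\sigma) := -\frac{1}{n}\sum_{\{v,w\}\subset V}\bigl(\mathbbm{1}_{\{v,w\}\in E}-p\bigr)\sigma(v)\sigma(w),
\end{equation*}
where $H_n^{\mathrm{CW},p}$ is the Curie--Weiss Hamiltonian at coupling $p/n$ and field $h$. For each fixed $\sigma$, $\Delta_n(\sigma)$ is a mean-zero quadratic form in the centred adjacency matrix with variance $O(1)$. A Hanson--Wright log-Laplace computation, combined with a union bound over the $O(n)$ admissible volumes, gives with $\mathbb{P}_{\ER_n(p)}$-probability $1-o(1)$ a uniform polynomial control
\begin{equation*}
W_{n,k} := \Bigl(\sum_{|\sigma|=k}e^{-\beta H_n(\sigma)}\Bigr)\Big/\Bigl(\sum_{|\sigma|=k}e^{-\beta H_n^{\mathrm{CW},p}(\sigma)}\Bigr) = n^{O(1)}
\end{equation*}
uniformly in $k\in\{\mathbf{M}_n,\ldots,\mathbf{S}_n\}$, and analogously for the restricted conductances appearing in the Dirichlet form. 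Feeding this into the Dirichlet principle (upper bound on $\mathrm{cap}$ with test functions depending only on the volume) and into the Berman--Konsowa flow principle (lower bound via unit flows along canonical single-flip paths), both the valley mass and the capacity match their $K_n$ counterparts at coupling $p/n$ up to random polynomial factors. Theorem~\ref{thm: class CW result} applied to that Curie--Weiss chain, whose free energy is exactly $R_{p,\beta,h}$ from \eqref{eq:potential}, delivers the exponential factor $e^{\beta n[R_{p,\beta,h}(\mathbf{t})-R_{p,\beta,h}(\mathbf{m})]}$; the deterministic Curie--Weiss prefactor together with the random ratio $W_{n,\mathbf{T}_n}/W_{n,\mathbf{M}_n}$ produce the $n^{E_n}$ correction.

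The main obstacle is the \emph{explicit} quantitative bound $|E_n|\leq \beta(\mathbf{t}-\mathbf{m})\tfrac{11}{6}$ in \eqref{eq:expbd}, rather than merely $|E_n|=O(1)$. A direct union bound over the $\binom{n}{k}$ configurations in $A_k$ would yield only the much weaker estimate $\max_{\sigma}|\Delta_n(\sigma)|=O(\sqrt{n})$, so one must control the exponential moments $\mathbb{E}_{\ER_n(p)}[W_{n,k}^{\pm 1}]$ directly through a Gaussian/log-Laplace calculation that returns an explicit polynomial exponent $c(2k/n-1)$ depending on $p,\beta,h$ and the magnetization. The constant $\tfrac{11}{6}$ would then emerge from summing these contributions through the saddle in the birth--death representation $1/\mathrm{cap}\propto\sum_k 1/c_k$, combined with the Stirling correction in \eqref{eq:Stirling} and the curvature factors from Theorem~\ref{thm: class CW result}. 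Pinning down this numerical constant, rather than just \emph{some} finite constant, will be the most delicate technical step.
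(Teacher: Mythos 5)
Your proposal takes a genuinely different route from the paper, and it is worth spelling out the contrast before pointing at the gaps. The paper follows the \emph{path-wise} approach: it perturbs the external field of the comparison Curie--Weiss model on $K_n$ (coupling $p/n$) by $\pm(1+\epsilon)\log(n^{11/6})/n$ (Definition~\ref{def:PCW}), proves a rank-ordering of escape probabilities (Lemma~\ref{lem: Hitting order}) that sandwiches the $\ER_n(p)$ dynamics between the upper and lower perturbed Curie--Weiss dynamics, proves uniformity in the starting configuration via an explicit coupling scheme (Sections~\ref{s:refcapest}--\ref{sec:couplingscheme}), and then reads off the exponent in Section~\ref{s:proofmettheorem} by computing how the Curie--Weiss crossover time of Theorem~\ref{thm: class CW result} responds to an $O(\log n/n)$ shift of $h$: the shift in the barrier $R_n(\mathbf{t})-R_n(\mathbf{m})$ is exactly $\mp\beta(\mathbf{t}-\mathbf{m})(1+\epsilon)\log(n^{11/6})$, whence the bound $|E_n|\le\beta(\mathbf{t}-\mathbf{m})\tfrac{11}{6}$. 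The constant $\tfrac{11}{6}$ is thus \emph{chosen a priori} as the size of the field perturbation, and is itself dictated by the rough capacity estimate Lemma~\ref{lem: cap G bound} (a factor $n^{5/6}$ from the edge-boundary concentration in Lemma~\ref{lem: phi i k bounds} plus an extra $n$ from a union bound over volume levels). Your proposal is instead a direct potential-theoretic attack on $\ER_n(p)$ --- decompose $H_n = H_n^{\mathrm{CW},p}+\Delta_n$, concentrate the quadratic disorder form, and control the capacity and valley mass of $\ER_n(p)$ separately. This is essentially the strategy of Bovier--Marello--Pulvirenti~\cite{BMPpr}, which the paper itself flags in Discussion point~4 as a sharper but harder route, and would in principle yield a tighter prefactor. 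However, your sketch has two substantive gaps relative to the claimed result.

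First, you assert the Bovier--den Hollander representation $\mathbb{E}_{\xi}[\tau_{A_{\mathbf{S}_n}}]=[1+o_n(1)]\,\mu_n(\mathcal{V}_{\mathbf{M}_n})/\mathrm{cap}(A_{\mathbf{M}_n},A_{\mathbf{S}_n})$ is ``valid uniformly in $\xi\in A_{\mathbf{M}_n}$''. It is not, out of the box: the standard potential-theoretic formula gives the mean hitting time started from the \emph{last-exit-biased} (harmonic-measure) distribution on $A_{\mathbf{M}_n}$, not from an arbitrary $\xi$. The paper devotes Sections~\ref{s:refcapest} and \ref{sec:couplingscheme} (Lemma~\ref{lem:stray}, Lemma~\ref{lem:couplingscheme}, Corollary~\ref{cor:longcoupling}) precisely to upgrading to uniformity in $\xi$ via a coupling that equalizes two copies of the chain long before either leaves the basin. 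Discussion point~4 explicitly records that in the capacity-based approach of \cite{BMPpr} the starting configuration is \emph{not} arbitrary. Any proof of Theorem~\ref{thm:metER} as stated must supply this coupling (or an equivalent local-mixing argument); your proposal is silent on it.

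Second, and as you yourself acknowledge, the explicit constant $\beta(\mathbf{t}-\mathbf{m})\tfrac{11}{6}$ is not actually derived. In the paper this constant is not the output of a delicate moment computation over the saddle; rather, $\tfrac{11}{6}$ enters as the exponent in the \emph{chosen} perturbation $h_n^{u,l}=h\pm(1+\epsilon)\log(n^{11/6})/n$, whose size is the smallest for which the capacity/escape-probability sandwich of Lemma~\ref{lem: Hitting order} can be closed given the concentration inputs, and the factor $\beta(\mathbf{t}-\mathbf{m})$ then drops out linearly from $R^{n}(\mathbf{t})-R^{n}(\mathbf{m})-R(\mathbf{t})+R(\mathbf{m})=-(\mathbf{t}-\mathbf{m})(h_n-h)$. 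In your route there is no analogous mechanism on display: a Hanson--Wright bound on $\Delta_n(\sigma)$ controls tails of a single configuration, but what you actually need is a high-probability bound on the ratio $W_{n,k}=\binom{n}{k}^{-1}\sum_{|\sigma|=k}e^{-\beta\Delta_n(\sigma)}$ uniformly over the saddle region, together with matching lower bounds for the Berman--Konsowa flow, and then a bookkeeping argument showing that the resulting polynomial degree equals $\beta(\mathbf{t}-\mathbf{m})\tfrac{11}{6}$. You would in effect have to rediscover the content of Lemmas~\ref{lem: phi i k bounds} and \ref{lem: cap G bound} and of the cancellation in Step~3 of the proof in Section~\ref{s:proofmettheorem}. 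Without that, the proposal establishes (at best) $|E_n|=O(1)$ with an unspecified constant, which is weaker than the theorem.
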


\noindent
Thus, apart from a polynomial error term, the average crossover time is the same as on the complete 
graph with ferromagnetic interaction strength $p/n$ instead of $1/n$.


\subsection{Discussion and outline}
\label{ss:disc}

We discuss the significance of our main theorem.

\medskip\noindent
{\bf 1.}
Theorem~\ref{thm:metER} provides an estimate on the average crossover time from
$A_{\mathbf{M}_n}$ to $A_{\mathbf{S}_n}$ on $\ER_n(p)$ (recall Fig.~\ref{fig:CWfe}). 
The estimate is \emph{uniform} in the starting configuration. The exponential term in 
the estimate is the \emph{same} as on $K_n$, but with a ferromagnetic interaction 
strength $p/n$ rather than $1/n$. The multiplicative error term is \emph{at most polynomial} 
in $n$. Such an error term is \emph{not} present on $K_{n}$, for which the prefactor is 
known to be a constant up to a multiplicative factor $1+o(1)$ (as shown in 
Theorem~\ref{thm: class CW result}). The randomness of $\ER_n(p)$ manifests 
itself through a more complicated prefactor, which we do not know how to identify. What 
is interesting is that, apparently, $\ER_n(p)$ is so homogeneous for large $n$ that the 
prefactor is at most polynomial. We expect the prefactor to be \emph{random} under
the law $\mathbb{P}_{\ER_n(p)}$.
 
\medskip\noindent
{\bf 2.} 
It is known that on $K_n$ the crossover time divided by it average has an exponential 
distribution in the limit as $n\to\infty$, as is typical for metastable behaviour. The same 
is true on $\ER_n(p)$. A proof of this fact can be obtained in a straightforward manner 
from the comparison properties underlying the proof of Theorem~\ref{thm:metER}. 
These comparison properties, which are based on \emph{coupling} of trajectories, 
also allow us to identify the \emph{typical set of trajectories} followed by the spin-flip 
dynamics prior to the crossover.  We will not spell out the details. 

\medskip\noindent
{\bf 3.}
The proof of Theorem~\ref{thm:metER} is based on estimates of transition probabilities 
and transition times between pairs of configurations with different volume, in combination 
with a \emph{coupling argument}. Thus we are following the \emph{path-wise} approach 
to metastabuilty (see \cite{BdH15} for background). Careful estimates are needed because 
on $\ER_n(p)$ the processes introduced in \eqref{eq:volmag} are \emph{not} Markovian, 
unlike on $K_n$. 

\medskip\noindent
{\bf 4.}
Bovier, Marello and Pulvirenti \cite{BMPpr} use capacity estimates and concentration of 
measure estimates to show that the prefactors form a \emph{tight} family of random variables
under the law $\mathbb{P}_{\ER_n(p)}$ as $n\to\infty$, which constitutes a considerable 
sharpening of \eqref{CWERasymp}. The result is valid for $\beta>\beta_c$ and $h$ small 
enough. The starting configuration is not arbitrary, but is drawn according to the 
\emph{last-exit-biased distribution} for the transition from $A_{\mathbf{M}_n}$ to 
$A_{\mathbf{S}_n}$, as is common in the \emph{potential-theoretic} approach to 
metastability. The exponential limit law is therefore not immediate.    

\medskip\noindent
{\bf 5.}
Another interesting model is where \emph{the randomness sits in the vertices rather
than in the edges}, namely, Glauber spin-flip dynamics with Hamiltonian
\begin{equation}
\label{eq:HamRF}
H_n(\sigma) = -\frac{1}{n} \sum_{1 \leq i,j \leq n} \sigma(i) \sigma(j)
- \sum_{1 \leq i \leq n} h_i\sigma(i),
\end{equation}
where $h_i$, $1 \leq i \leq n$, are i.i.d.\ random variables drawn from a common probability
distribution $\nu$ on $\R$. The metastable behaviour of this model was analysed in 
Bovier \emph{et al.}~\cite{BEGK01} (discrete $\nu$) and Bianchi \emph{et al.}~\cite{BBI09} 
(continuous $\nu$). In particular, the prefactor was computed up to a multiplicative factor 
$1+o(1)$, and turns out to be rather involved (see \cite[Chapters 14--15]{BdH15}). Our 
model is even harder because the interaction between the spins runs along the edges of 
$\ER_n(p)$, which has an \emph{intricate spatial structure}. Consequently, the so-called 
\emph{lumping technnique} (employed in \cite{BEGK01} and \cite{BBI09} to monitor the 
magnetization on the level sets of the magnetic field) can no longer be used. For the dynamics 
under \eqref{eq:HamRF} the exponential law was proved in Bianchi \emph{et al.}~\cite{BBI12}.

\medskip\noindent
{\bf Outline.} 
The remainder of the paper is organized as follows. In Section~\ref{s:preprep} we define 
the perturbed spin-flip dynamics on $K_n$ (Definition~\ref{def:PCW} below) and explain
why Definition~\ref{def:metreg} identifies the metastable regime (Lemma~\ref{lem:doublewell} 
below). In Section~\ref{s:prep} we collect a few basic facts about the geometry of $\ER_n(p)$ 
and the spin-flip dynamics on $\ER_n(p)$. In Section~\ref{s:capest} we derive rough capacity 
estimates for the spin-flip dynamics on $\ER_n(p)$. In Section~\ref{s:refcapest} we derive 
refined capacity estimates. In Section~\ref{sec:couplingscheme} we show that two copies 
of the spin-flip dynamics starting near the metastable state can be coupled in a short time.
In Section~\ref{s:proofmettheorem}, finally, we prove Theorem~\ref{thm:metER}.


\section{Preparations}
\label{s:preprep}

In Section~\ref{ss:perCW} we define the perturbed spin-flip dynamics on $K_n$ that will
be used as comparison object. In Section~\ref{ss:metaperturbedCW} we do a rough metastability 
analysis of the perturbed model. In Section~\ref{ss:doublewell} we show that $R_{p,\beta,h}$ has 
a double-well structure if and only if $(\beta,h)$ is in the metastable regime, in the sense of 
Definition~\ref{def:metreg} (Lemma~\ref{lem:doublewell} below). 

Define
\begin{equation}
J^*_n(a) = 2\beta\left(p\left(a+\tfrac{2}{n}\right)+h\right)
+\log\left(\frac{1-a}{1+a+\frac{2}{n}}\right), \quad a \in \Gamma_n.
\end{equation} 
We see from \eqref{eq:Stirling} that $J_n(a) = J^*_n(a) + O(n^{-2})$ when $\beta p = \frac{1}{1-a^2}$.
This will be useful for the analysis of the `free energy landscape'.   


\subsection{Perturbed Curie-Weiss}
\label{ss:perCW}

We will compare the dynamics on $\ER_n(p)$ with that on $K_n$, but with a ferromagnetic 
coupling strength $p/n$ rather than $1/n$, and with an external magnetic field field that is 
a \emph{small perturbation} of $h$. 

\begin{definition}[{\bf Perturbed Curie-Weiss}]
\label{def:PCW}
$\mbox{}$\\
{\rm (1) Let 
\begin{eqnarray}
\label{eq:Hu}
H_{n}^{u}\left(\sigma\right) &=& -\tfrac{p}{2n}\sum_{1\leq i,j\leq n}\sigma(i)\sigma(j)
-h_n^{u}\sum_{1\leq i\leq n}\sigma(i), \quad \sigma\in S_{n},\\
\label{eq:Hl}
H_{n}^{l}\left(\sigma\right) &=& -\tfrac{p}{2n}\sum_{1\leq i,j\leq n}\sigma(i)\sigma(j)
-h_n^{l}\sum_{1\leq i\leq n}\sigma(i), \quad\,\sigma\in S_{n},
\end{eqnarray}
be the Hamiltonians on $S_{n}$ corresponding to the Curie-Weiss model on $n$ vertices with 
ferromagnetic coupling strength $p/n$, and with external magnetic fields $h_n^{u}$ and $h_n^{l}$ 
given by 
\begin{equation}
h_n^{u} = h+\tfrac{\left(1+\epsilon\right)\log(n^{11/6})}{n},
\qquad  h_n^{l} = h-\tfrac{\left(1+\epsilon\right)\log(n^{11/6})}{n},
\label{eq: h perturbation}
\end{equation}
where $\epsilon>0$ is arbitrary. The indices $u$ and $l$ stand for upper and lower, and the 
choice of exponent $\tfrac{11}{6}$ will become clear in Section~\ref{s:capest}.\\
(2) The equilibrium measures on $S_{n}$ corresponding to (\ref{eq:Hu}) and (\ref{eq:Hl}) are 
denoted by $\mu_n^{u}$ and $\mu_n^{l}$, respectively (recall \eqref{eq:mu}).\\
(3) The Glauber dynamics based on  (\ref{eq:Hu}) and (\ref{eq:Hl}) are denoted by 
\begin{equation}
\label{eq:Kndyn}
\left\{\xi_{t}^{u}\right\}_{t\geq 0}, \qquad \left\{\xi_{t}^{l}\right\}_{t\geq 0}, 
\end{equation}
respectively.\\
(4) The analogues of \eqref{eq: mts} and \eqref{eq:MTSdef} are denoted by $\mathbf{m}^u_n,\mathbf{t}^u_n,
\mathbf{s}^u_n$, $\mathbf{M}^u_n,\mathbf{T}^u_n,\mathbf{S}^u_n$ and $\mathbf{m}^l_n,\mathbf{t}^l_n,\mathbf{s}^l_n$, 
$\mathbf{M}^l_n,\mathbf{T}^l_n,\mathbf{S}^l_n$, respectively.} 
\hfill$\blacksquare$
\end{definition}

In what follows we will \emph{suppress the $n$-dependence from most of the notation}. Almost all of the 
analysis in Sections~\ref{s:preprep}--\ref{s:proofmettheorem} pertains to the dynamics on 
$\ER_n(p)$.   


\subsection{Metastability for perturbed Curie-Weiss}
\label{ss:metaperturbedCW}

Recall that $\{\xi_{t}^{u}\}_{t\geq0}$ and $\{\xi_{t}^{l}\} _{t\geq0}$ denote the Glauber dynamics for the 
Curie-Weiss model driven by (\ref{eq:Hu}) and (\ref{eq:Hl}), respectively. An important feature is that their 
magnetization processes 
\begin{equation}
\begin{aligned}\
\left\{\theta_{t}^{u}\right\} _{t\geq0} 
& =\{m(\xi_{\tau_{s}^{l}}^{l})\} _{t\geq0}\\
\left\{\theta_{t}^{l}\right\} _{t\geq0} 
& =\{m(\xi_{\tau_{s}^{u}}^{u})\} _{t\geq0}
\end{aligned}
\label{eq: theta u l}
\end{equation}
are continuous-time Markov processes themselves (see e.g.\ Bovier and den 
Hollander~\cite[Chaper 13]{BdH15}). The state space of these two processes is 
$\Gamma_n=\left\{ -1,-1+\frac{2}{n},\ldots,1-\frac{2}{n},1\right\}$, and the transition rates are 
\begin{eqnarray}
\label{eq: qu}
&&q^{u}\left(a,a'\right)=
\begin{cases}
\frac{n}{2}\left(1-a\right)e^{-\beta [p(-2a-\frac{2}{n})-2h^{u}]_{+}}, 
& \mbox{if }a'=a+\frac{2}{n},\\
\frac{n}{2}\left(1+a\right)e^{-\beta [p(2a+\frac{2}{n})+2h^{u}]_{+}}, 
& \mbox{if }a'=a-\frac{2}{n},\\
0, & \mbox{otherwise},
\end{cases}
\\
\label{eq:ql}
&&q^{l}\left(a,a'\right)=
\begin{cases}
\frac{n}{2}\left(1-a\right) e^{-\beta [p(-2a-\frac{2}{n})-2h^{l}]_{+}}, 
& \mbox{if }a'=a+\frac{2}{n},\\
\frac{n}{2}\left(1+a\right) e^{-\beta [p(2a+\frac{2}{n})+2h^{l}]_{+}}, 
& \mbox{if }a'=a-\frac{2}{n},\\
0, & \mbox{otherwise},
\end{cases}
\end{eqnarray}
respectively. The processes in \eqref{eq: theta u l} are reversible with respect to the Gibbs measures
\begin{eqnarray}
\label{eq:nu-u}
\nu^{u}\left(a\right) &=& \frac{1}{z^{u}} e^{\beta n(\tfrac12 pa^{2}
+h^{u}a)}{n \choose \frac{1+a}{2}n},\quad a\in \Gamma_n,\\
\label{eq:nu-l}
\nu^{l}\left(a\right) &=& \frac{1}{z^{l}} e^{\beta n(\tfrac12 pa^{2}
+h^{l}a)}{n \choose \frac{1+a}{2}n},\quad a\in \Gamma_n,
\end{eqnarray}
respectively. 

Define 
\begin{eqnarray}
\label{eq:Psi u def}
\Psi^{u}\left(a\right) &=& -\tfrac12 pa^{2}-h^{u}a,\quad a\in \Gamma_n,\\
\label{eq:Psi l def}
\Psi^{l}(a) &=& -\tfrac12 pa^{2}-h^{l}a,\quad a\in \Gamma_n.
\end{eqnarray}
Note that (\ref{eq: qu}) and (\ref{eq:nu-u}) can be written as 
\begin{eqnarray}
q^{u}\left(a,a+\tfrac{2}{n}\right) & = & \tfrac{n}{2}\left(1-a\right)
e^{-\beta n [\Psi^{u}(a+\tfrac{2}{n})-\Psi^{u}(a)]_{+}},\nonumber\\
\nu^{u}\left(a\right) 
& = & \frac{1}{z^{u}} e^{-\beta n\Psi^{u}(a)}
{n \choose \frac{n}{2}\left(1+a\right)},
\label{eq:mu q Psi relation}
\end{eqnarray}
and similar formulas hold for (\ref{eq:ql}) and (\ref{eq:nu-l}). The properties of the function $\nu^{u}\colon 
\Gamma_n\to\left[0,1\right]$ can be analysed by looking at 
the ratio of adjacent values:
\begin{equation}
\frac{\nu^{u}\left(a+\tfrac{2}{n}\right)}{\nu^{u}\left(a\right)}
=\exp\left(2\beta\left(p\left(a+\tfrac{2}{n}\right)+h^{u}\right)
+\log\Big(\tfrac{1-a}{1+a+\tfrac{2}{n}}\Big)\right),
\label{eq:ratio mu}
\end{equation}
which suggests that `local free energy wells' in $\nu^{u}$ can be found by looking at where 
the sign of
\begin{equation}
\label{eq:sumtwo}
2\beta\left(p\left(a+\tfrac{2}{n}\right)+h^{u}\right)+\log\left(\tfrac{1-a}{1+a+\frac{2}{n}}\right)
\end{equation}
changes from negative to positive. To that end note that, in the limit $n\to\infty$, the second 
term is positive for $a<0$, tends to $\infty$ as $a \to -1$, is negative for $a\geq0$, tends to 
$-\infty$ as $a \to 1$, and tends to $0$ as $a\to0$. The first term is linear in $a$, and for 
appropriate choices of $p$, $\beta$ and $h^{u}$ (see Definition~\ref{def:metreg}) is negative 
near $a=-1$ and becomes positive at some value $a<0$. This implies that, for appropriate 
choices of $p$, $\beta$ and $h^{u}$, the sum of the two terms in \eqref{eq:sumtwo} can 
change sign $+\to-\to+$ on the interval $\left[-1,0\right]$, and can change sign $+\to-$ on 
$\left[0,1\right]$. Assuming that our choice of $p$, $\beta$ and $h^{u}$ corresponds to this 
change-of-signs sequence, we define $\mathbf{m}^{u}$, $\mathbf{t}^{u}$ and $\mathbf{s}^{u}$
as in \eqref{eq: mts} with $h$ replaced by $h^{u}$. This observation makes it clear that the 
sets in the right-hand side of \eqref{eq: mts} indeed are non-empty. 

The interval $\left[\mathbf{m}^{u},\mathbf{t}^{u}\right]$ poses a barrier for the process 
$\{\theta_{t}^{u}\}_{t \geq 0}$ due to a negative drift, which delays the initiation of the 
convergence to equilibrium while the process passes through the interval $\left[\mathbf{t}^{u},
\mathbf{s}^{u}\right]$. The same is true for the process $\{\xi_{t}^{u}\}_{t\geq 0}$. 
Similar observations hold for  $\{\theta_{t}^{l}\}_{t \geq 0}$ and $\{\xi_{t}^{l}\}_{t\geq 0}$. 
Recall Fig.~\ref{fig:CWfe}.


\subsection{Double-well structure}
\label{ss:doublewell} 

\begin{lemma}[{\bf Metastable regime}]
\label{lem:doublewell}
The potential $R_{p,\beta,h}$ defined in \eqref{eq:potential} has a double-well structure if and 
only if $\beta p > 1$ and $0 < h < p\chi(\beta p)$, with $\chi$ defined in \eqref{eq:chidef}.
\end{lemma}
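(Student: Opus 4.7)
The plan is to characterize the double-well structure of $R_{p,\beta,h}$ through the sign pattern of its first and second derivatives, and then match the resulting thresholds with $\chi(\beta p)$.

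First I would compute, from $I'(a)=\tfrac12\log\tfrac{1+a}{1-a}$ and $I''(a)=\tfrac{1}{1-a^2}$, the derivatives
\begin{equation*}
R_{p,\beta,h}'(a)=-pa-h+\tfrac{1}{2\beta}\log\tfrac{1+a}{1-a},\qquad
R_{p,\beta,h}''(a)=-p+\tfrac{1}{\beta(1-a^2)}.
\end{equation*}
If $\beta p\le 1$, then $R_{p,\beta,h}''>0$ on $(-1,1)$, so $R_{p,\beta,h}$ is strictly convex and a double well is impossible. If $\beta p>1$, then $R_{p,\beta,h}''$ has exactly two zeros $\pm a^*$ with $a^*=\sqrt{1-1/(\beta p)}$, is negative on $(-a^*,a^*)$, and positive on $(-1,-a^*)\cup(a^*,1)$. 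Consequently $R_{p,\beta,h}'$ is strictly increasing on $(-1,-a^*)$, strictly decreasing on $(-a^*,a^*)$, and strictly increasing on $(a^*,1)$, with boundary limits $R_{p,\beta,h}'(-1^+)=-\infty$ and $R_{p,\beta,h}'(1^-)=+\infty$.

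Under this monotonicity pattern, $R_{p,\beta,h}$ has a double-well shape (two strict local minima separated by a strict local maximum) if and only if $R_{p,\beta,h}'$ admits three zeros, equivalently
\begin{equation*}
R_{p,\beta,h}'(-a^*)>0 \quad\text{and}\quad R_{p,\beta,h}'(a^*)<0.
\end{equation*}
The key algebraic step is the identity $\tfrac{1+a^*}{1-a^*}=\beta p\,(1+a^*)^2$, which follows from $1-(a^*)^2=1/(\beta p)$. Substituting this into $R_{p,\beta,h}'(\pm a^*)=\mp p a^*-h\pm\tfrac{1}{2\beta}\log\tfrac{1+a^*}{1-a^*}$ and comparing with \eqref{eq:chidef} yields the clean identities
\begin{equation*}
R_{p,\beta,h}'(-a^*)=p\chi(\beta p)-h,\qquad R_{p,\beta,h}'(a^*)=-p\chi(\beta p)-h.
\end{equation*}

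From here the conclusion is immediate: the two inequalities become $h<p\chi(\beta p)$ and $h>-p\chi(\beta p)$, which together with the standing sign $h>0$ (and the fact that $\chi(\beta p)>0$ whenever $\beta p>1$) reduce to $0<h<p\chi(\beta p)$. The converse is the contrapositive of the same discussion: for $\beta p\le 1$ convexity excludes a double well, and for $\beta p>1$ with $h$ outside $(-p\chi(\beta p),p\chi(\beta p))$ at least one of $R_{p,\beta,h}'(\pm a^*)$ has the wrong sign, so $R_{p,\beta,h}'$ has fewer than three zeros. The only delicate step is the algebraic identification $R_{p,\beta,h}'(-a^*)=p\chi(\beta p)-h$; once that identity is in hand, everything else is read off from the shape of $R_{p,\beta,h}'$.
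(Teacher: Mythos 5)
Your argument is correct and follows essentially the same route as the paper's proof: both locate the inflection point $-a^*=-\sqrt{1-1/(\beta p)}$ (the paper works with $J_{p,\beta,h}=-2\beta R_{p,\beta,h}'$ and calls it $a_\lambda$) and obtain the threshold $h<p\chi(\beta p)$ by evaluating the derivative there, using the same algebraic identity $\tfrac{1+a^*}{1-a^*}=\beta p(1+a^*)^2$. Your writeup is if anything slightly cleaner, since you also note explicitly that the companion condition $R_{p,\beta,h}'(a^*)<0$ reads $h>-p\chi(\beta p)$ and is automatic once $h>0$, a point the paper leaves implicit.
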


\begin{proof}
In order for $R_{p,\beta,h}$ to have a double-well structure, the measure $\nu$ must have 
two distinct maxima on the interval $\left(-1,1\right)$. From \eqref{eq:InJnlim}, \eqref{eq:diffR=J} 
and \eqref{eq:ratio mu} it follows that  
\begin{equation}
J_{p,\beta,h}(a)=2\lambda\left(a+\tfrac{h}{p}\right)+\log\left(\tfrac{1-a}{1+a}\right),
\qquad \lambda=\beta p,
\end{equation}
must have one local minimum and two zeroes in $\left(-1,1\right)$. Since 
\begin{equation}
\label{eq:Jprime}
J_{p,\beta,h}'(a)=2\left(\lambda-\tfrac{1}{1-a^2}\right), \qquad a \in [-1,1],
\end{equation}
it must therefore be that $\lambda>1$. The local minimum is attained when 
\begin{equation}
\lambda=\tfrac{1}{1-a^{2}},
\end{equation}
i.e., when $a=a_\lambda=-\sqrt{1-\frac{1}{\lambda}}$ ($a_\lambda$ must be negative
because it lies in $(\mathbf{m},\mathbf{t})$; recall Fig.~\ref{fig:CWfe}). Since
\begin{equation}
0 > J_{p,\beta,h}(a_\lambda)=2\lambda\left(a_\lambda+\tfrac{h}{p}\right)
+\log\left(\tfrac{1-a_\lambda}{1+a_\lambda}\right),
\end{equation}
it must therefore be that 
\begin{equation}
\tfrac{h}{p} < \chi(\lambda)
\end{equation}
with $\chi(\lambda)$ given by \eqref{eq:chidef}.
\end{proof}


\section{Basic facts}
\label{s:prep}

In this section we collect a few facts that will be needed in Section~\ref{s:capest}
to derive capacity estimates for the dynamics on $\ER_n(p)$. In Section~\ref{ss:concERnp} 
we derive a large deviation bound for the degree of typical vertices $\ER_n(p)$
(Lemma~\ref{lem: degree bound} below). In Section~\ref{ss:edgeERnp} we do the same 
for the edge-boundary of typical configurations (Lemma~\ref{lem: phi i k bounds} below). 
In Section~\ref{ss:jumpvol} we derive upper and lower bounds for the jump rates of the volume 
process (Lemmas~\ref{lem: Rate bounds}--\ref{lem:H near m} and Corollary~\ref{cor:pleqh}
below), and show that the return times to the metastable set \emph{conditional} on not hitting
the top set are small (Lemma~\ref{lem:returntail} below). In Section~\ref{ss:unifstart} we use the various 
bounds to show that the probability for the volume process to grow by $n^{1/3}$ is almost 
uniform in the starting configuration (Lemma~\ref{lem:ratio probab} and Corollary~\ref{cor:expratio} 
below).

\begin{definition}[{\bf Notation}]
\label{def:notsets}
{\rm For a vertex $v\in V$, we will write $v\in\sigma$ to mean $\sigma(v)=+1$ and $v\notin\sigma$ 
to mean $\sigma(v)=-1$. Similarly, we will denote by $\overline{\sigma}$ the configuration obtained 
from $\sigma$ by flipping the spin at every vertex, i.e., $\sigma(v)=+1$ if and only if $\overline{\sigma}
(v)=-1$. For two configurations $\sigma,\sigma'$ we will say that $\sigma\subseteq\sigma'$ if and 
only if $v\in\sigma\implies v\in\sigma'$. By $\sigma\cup\sigma'$ we denote the configuration satisfying 
$v\in\sigma\cup\sigma'$ if and only if $v\in\sigma$ or $v\in\sigma'$. A similar definition applies to 
$\sigma\cap\sigma'$. We will also write $\sigma\sim\sigma'$ when there is a $v\in V$ such that 
$\sigma=\sigma'\cup\left\{v\right\} $ or $\sigma'=\sigma\cup\left\{v\right\}$. We will say that $\sigma$ 
and $\sigma'$ are neighbours. We write $\deg(v)$ for the degree of $v \in V$.} \hfill$\blacksquare$
\end{definition}


\subsection{Concentration bounds for $\ER_n(p)$}
\label{ss:concERnp}

Recall that $\mathbb{P}_{\ER_n(p)}$  denotes the law $\ER_n(p)$.

\begin{lemma}[{\bf Concentration of degrees and energies}]
\label{lem: degree bound}
With $\mathbb{P}_{\ER_n(p)}$-probability tending to $1$ as $n\to\infty$ the following is true.
For any $\epsilon > 0$ and any $c>\sqrt{\frac{1}{8}\log2}$, 
\begin{eqnarray}
\label{eq:degree conc. ineq}
&pn-(1+\epsilon) \sqrt{n\log n}<\deg(v)
<pn+(1+\epsilon) \sqrt{n\log n} \qquad \forall\,v \in V,\\[0.2cm]
\label{eq:H conc ineq.}
&\frac{1}{n}\left(2p|\xi|(n-|\xi|)-cn^{3/2}\right)-2h|\xi|
\leq H_n(\xi)-H_n(\boxminus)\\ \nonumber
&\qquad\qquad\qquad \leq \frac{1}{n} \left(2p|\xi|(n-|\xi|)+ cn^{3/2}\right) - 2h|\xi| 
\qquad \forall\,\xi\in S_n.
\end{eqnarray}
\end{lemma}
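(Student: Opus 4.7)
The plan is to treat the two inequalities independently, since both reduce to Chernoff/Hoeffding tail bounds for a sum of independent $\mathrm{Bernoulli}(p)$ edge indicators of $\ER_n(p)$, followed by a union bound. No dynamical input is needed.

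For \eqref{eq:degree conc. ineq}, fix $v\in V$ and observe that $\deg(v)$ is a sum of $n-1$ independent $\mathrm{Bernoulli}(p)$ random variables. Hoeffding's inequality gives
\[
\mathbb{P}_{\ER_n(p)}\big(|\deg(v)-p(n-1)|>t\big)\le 2e^{-2t^2/(n-1)}.
\]
Choosing $t=(1+\epsilon)\sqrt{n\log n}$ makes the right-hand side $O(n^{-2(1+\epsilon)^2})$, and a union bound over the $n$ vertices leaves a total failure probability $o(1)$. The deterministic slack $|p(n-1)-pn|=p$ is absorbed in the $(1+\epsilon)\sqrt{n\log n}$ margin, so \eqref{eq:degree conc. ineq} follows.

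For \eqref{eq:H conc ineq.}, apply the geometric identity \eqref{eq:Hn as boundary size} to rewrite $H_n(\xi)-H_n(\boxminus)=\tfrac{2}{n}|\partial_E\xi|-2h|\xi|$. This reduces \eqref{eq:H conc ineq.} to the uniform edge-boundary estimate
\[
\big||\partial_E\xi|-p|\xi|(n-|\xi|)\big|\le \tfrac{c}{2}n^{3/2}\qquad\forall\,\xi\in S_n.
\]
For a fixed $\xi$ with $|\xi|=k$, the variable $|\partial_E\xi|$ is $\mathrm{Binomial}(k(n-k),p)$, because the $k(n-k)$ potential cut-edges are independent. A Chernoff/Hoeffding bound yields a per-$\xi$ failure probability at most $2\exp(-c^2 n^3/(2k(n-k)))\le 2\exp(-2c^2 n)$ via $k(n-k)\le n^2/4$. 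A union bound over the $2^n$ configurations gives total probability $2\exp(n(\log 2-2c^2))\to 0$ once $c$ exceeds the stated threshold; to recover the precise constant $\sqrt{\log 2/8}$ one should replace the crude Hoeffding with a Bernstein-type estimate that exploits the variance $p(1-p)\le\tfrac{1}{4}$ and carry out the union bound level by level over the $\binom{n}{k}$ sets of given volume $k$. The only nontrivial step is keeping track of the Chernoff-vs-entropy balance so that the per-$\xi$ exponent beats the $n\log 2$ coming from the uniform union bound.
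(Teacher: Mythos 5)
Your approach is exactly the one the paper has in mind: the paper's proof is the single sentence ``These bounds are immediate from Hoeffding's inequality and a union bound,'' and your write-up correctly unpacks that into (i) Hoeffding plus a union bound over the $n$ vertices for the degrees, and (ii) the identity \eqref{eq:Hn as boundary size} reducing \eqref{eq:H conc ineq.} to the concentration of the $\mathrm{Bin}(k(n-k),p)$ variable $|\partial_E\xi|$, followed by Hoeffding and a union bound over configurations.

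One point deserves a caveat, though. You are right that your crude bound gives a threshold $c>\sqrt{\tfrac{1}{2}\log 2}$, not the stated $c>\sqrt{\tfrac{1}{8}\log 2}$, and you propose to close the gap via a Bernstein bound that exploits $p(1-p)\le\tfrac14$ plus a level-wise union bound over volume classes. Neither actually improves the constant: at $p=\tfrac12$ the Bernstein exponent $t^2/(2mp(1-p))$ coincides with Hoeffding's $2t^2/m$, and the level-wise union bound is dominated by $k=\tfrac{n}{2}$, where $\binom{n}{n/2}\asymp 2^n/\sqrt{n}$ and $k(n-k)=n^2/4$, so one recovers exactly the same competition $n\log 2$ vs.\ $2c^2 n$ as in the crude count. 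The threshold achievable by these arguments for all $p\in(0,1)$ is $\sqrt{\tfrac{1}{2}\log 2}=2\sqrt{\tfrac{1}{8}\log 2}$; the paper's stated constant appears to be off by a factor of $2$ (equivalently, the $cn^{3/2}$ in \eqref{eq:H conc ineq.} should read $2cn^{3/2}$ to match $\sqrt{\tfrac{1}{8}\log 2}$). Since the downstream uses of \eqref{eq:H conc ineq.} only require \emph{some} fixed $c$, this discrepancy is harmless, but you should not claim that a sharper concentration inequality or a finer union bound recovers the paper's constant --- it does not, and stating so would be a wrong claim in an otherwise correct proof.
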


\begin{proof}
These bounds are immediate from Hoeffding's inequality and a union bound.
\end{proof}


\subsection{Edge boundaries of $\ER_n(p)$}
\label{ss:edgeERnp}

We partition the configuration space as 
\begin{equation}
S_{n}=\bigcup_{k=0}^{n}A_{k},
\end{equation}
where $A_{k}$ is defined in \eqref{eq:def Ak}. For $0\leq k\leq n$ and $-pk\left(n-k\right)\leq 
i\leq\left(1-p\right)k\left(n-k\right)$, define 
\begin{equation}
\phi_{i}^{k}=\left|\left\{ \sigma\in A_{k}\colon\,\left|\partial_{E}\sigma\right|=pk\left(n-k\right)+i\right\}\right|,
\label{eq:phi}
\end{equation}
i.e., $\phi_{i}^{k}$ counts the configurations $\sigma$ with volume $k$ whose edge-boundary size 
$\left|\partial_{E}\sigma\right|$ deviates by $i$ from its mean, which is equal to $pk\left(n-k\right)$.
For $0\leq k\leq n$, let $\mathbb{P}_k$ denote the uniform distribution on $A_k$. 

\begin{lemma}[{\bf Upper bound on edge-boundary sizes}]
\label{lem: phi i k bounds}
With $\mathbb{P}_{\ER_n(p)}$-probability tending to $1$ as $n\to\infty$ the following are true.
For $-pk(n-k)\leq j\leq (1-p)k(n-k)$ and $\varrho\colon\,\mathbb{N}\to\mathbb{R}_{+}$,
\begin{equation}
\mathbb{P}_k\left[\phi_{j}^{k}\geq\varrho\left(n\right){n \choose k}p^{pk(n-k)+j}
(1-p)^{(1-p)k(n-k)-j}{k(n-k) \choose pk(n-k)+j}\right]\leq\frac{1}{\varrho(n)}
\label{eq:phi k i}
\end{equation}
and 
\begin{equation}
\begin{aligned}
\mathbb{P}_k\left[\sum_{j\geq i}\phi_{j}^{k}\geq\varrho\left(n\right){n \choose k}
e^{-\tfrac{2i^{2}}{k(n-k)}}\right] 
& \leq\frac{1}{\varrho\left(n\right)},\\
\mathbb{P}_k\left[\sum_{j\leq-i}\phi_{j}^{k}\geq\varrho\left(n\right){n \choose k}
e^{-\tfrac{2i^{2}}{k(n-k)}}\right] 
& \leq\frac{1}{\varrho\left(n\right)}.
\end{aligned}
\label{eq:phi k i 2}
\end{equation}
\end{lemma}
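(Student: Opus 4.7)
The plan is to recognize that for any fixed configuration $\sigma\in A_k$, the edge-boundary size $|\partial_E\sigma|$ is distributed as $\mathrm{Bin}(k(n-k),p)$ under $\mathbb{P}_{\ER_n(p)}$: each of the $k(n-k)$ potential edges between the vertex set $\{v\colon\sigma(v)=+1\}$ and its complement $\{v\colon\sigma(v)=-1\}$ is included independently with probability $p$, and these Bernoulli edges are all distinct. Consequently,
\begin{equation*}
\mathbb{P}_{\ER_n(p)}\bigl(|\partial_E\sigma|=pk(n-k)+j\bigr)
=\binom{k(n-k)}{pk(n-k)+j}\,p^{pk(n-k)+j}(1-p)^{(1-p)k(n-k)-j}.
\end{equation*}
This is the one distributional input driving everything; both parts of the lemma then follow as first-moment arguments.

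For \eqref{eq:phi k i}, I would apply linearity of expectation across the $\binom{n}{k}$ configurations in $A_k$ to obtain
\begin{equation*}
\mathbb{E}_{\ER_n(p)}[\phi_j^k]=\binom{n}{k}\binom{k(n-k)}{pk(n-k)+j}\,p^{pk(n-k)+j}(1-p)^{(1-p)k(n-k)-j},
\end{equation*}
which is exactly the bracketed expression in \eqref{eq:phi k i} with the factor $\varrho(n)$ removed. Markov's inequality $\mathbb{P}(\phi_j^k\geq \varrho(n)\,\mathbb{E}[\phi_j^k])\leq 1/\varrho(n)$ then gives the claim.

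For \eqref{eq:phi k i 2}, I would replace the binomial point-mass by the Hoeffding tail bound
\begin{equation*}
\mathbb{P}_{\ER_n(p)}\bigl(|\partial_E\sigma|\geq pk(n-k)+i\bigr)\leq e^{-2i^2/k(n-k)},
\end{equation*}
valid for any fixed $\sigma\in A_k$, together with the symmetric lower-tail bound. Summing over $\sigma\in A_k$ yields
\begin{equation*}
\mathbb{E}_{\ER_n(p)}\Bigl[\sum_{j\geq i}\phi_j^k\Bigr]\leq \binom{n}{k}\,e^{-2i^2/k(n-k)},
\end{equation*}
and one more application of Markov's inequality delivers the upper-tail bound in \eqref{eq:phi k i 2}; the lower-tail bound follows by the same reasoning applied to the opposite direction of Hoeffding.

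I do not foresee any genuine obstacle: the entire argument is a first-moment calculation using (i) the independence of edges in $\ER_n(p)$ to identify the binomial distribution of $|\partial_E\sigma|$ for fixed $\sigma$, (ii) Hoeffding's inequality for the tail version, and (iii) Markov's inequality applied to the resulting expectations. No union bound over $k$, $j$ or $\varrho$ is needed because the inequalities are asserted pointwise in these parameters.
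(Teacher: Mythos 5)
Your proposal is correct and matches the paper's proof essentially step for step: identify $|\partial_E\sigma|$ as $\mathrm{Bin}(k(n-k),p)$, compute the first moment of $\phi_j^k$ (resp.\ of the tail sum via Hoeffding), and apply Markov's inequality. The only cosmetic difference is that you make explicit that the binomial law arises by fixing $\sigma$ and randomizing the graph, whereas the paper writes it annealed, viewing $\sigma$ as uniform on $A_k$; the calculation is identical either way.
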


\begin{proof}
Write $\simeq$ to denote equality in distribution. Note that if $\sigma\simeq\mathbb{P}_k$, 
then $\left|\partial_{E}\sigma\right|\simeq\mathrm{Bin}\left(k\left(n-k\right),p\right)$, and hence 
\begin{equation}
\mathbb{P}_k\left[\left|\partial_{E}\sigma\right|=i\right]=p^{i}
\left(1-p\right)^{k\left(n-k\right)-i}{k\left(n-k\right) \choose i}.
\end{equation}
In particular, 
\begin{equation}
\mathbb{E}_k\left[\phi_{j}^{k}\right] 
= \mathbb{E}_k\left[\sum_{\sigma\in A_{k}}
\mathbbm{1}_{\left\{ \left|\partial_{E}\sigma\right|=pk(n-k)+j\right\} }\right]
= {n \choose k}p^{pk(n-k)+j}(1-p)^{(1-p)k(n-k)-j}{k\left(n-k\right) \choose pk(n-k)+j}.
\end{equation}
Hence, by Markov's inequality, the claim in (\ref{eq:phi k i}) follows. Moreover,
\begin{equation}
\mathbb{E}_k\Bigg[\sum_{j\geq i}\phi_{j}^{k}\Bigg] 
= \mathbb{E}_k\left[\sum_{\sigma\in A_{k}}
\mathbbm{1}_{\left\{ \left|\partial_{E}\sigma\right|\geq pk(n-k)+i\right\} }\right]\\
\leq {n \choose k} e^{-2\tfrac{i^{2}}{k(n-k)}},
\end{equation}
where we again use Hoeffding's inequality. Hence, by Markov's inequality, we get the first 
line in (\ref{eq:phi k i 2}). The proof of the second line is similar.  
\end{proof}


\subsection{Jump rates for the volume process}
\label{ss:jumpvol}

The following lemma establishes bounds on the rate at which configurations in $A_{k}$ jump forward 
to $A_{k+1}$ and backward to $A_{k-1}$. In Appendix \ref{app} we will sharpen the error in the 
prefactors in \eqref{eq:frate}--\eqref{eq: brate} from $2n^{2/3}$ to $O(1)$ and the error in the exponents
in \eqref{eq:frate}--\eqref{eq: brate} from $3n^{-1/3}$ to $O(n^{-1/2})$. The formulas in \eqref{eq:frate at end}
and \eqref{eq:frate at end alt} show that for small and large magnetization the rate forward, respectively,
backward are maximal.  

\begin{lemma}[{\bf Bounds on forward jump rates}]
\label{lem: Rate bounds}
With $\mathbb{P}_{\ER_n(p)}$-probability tending to $1$ as $n\to\infty$ the following are true.\\ 
(a) For $2n^{1/3} \leq k \leq n-2n^{1/3}$,
\begin{equation}
\begin{aligned}
&\big(n-k-2n^{2/3}\big) e^{-2\beta [\vartheta_{k}+3n^{-1/3}]_{+}}\\
&\qquad \leq\sum_{\xi\in A_{k+1}}r\left(\sigma,\xi\right)\leq\big(n-k-2n^{2/3}\big)
e^{-2\beta [\vartheta_{k}-3n^{-1/3}]_{+}}+2n^{2/3}, \qquad \sigma\in A_{k},
\end{aligned}
\label{eq:frate}
\end{equation}
and 
\begin{equation}
\begin{aligned}
&\big(k-2n^{2/3}\big) e^{-2\beta[-\vartheta_{k}+3n^{-1/3}]_{+}}\\
&\qquad \leq\sum_{\xi\in A_{k-1}}r\left(\sigma,\xi\right)\leq\big(k-2n^{2/3}\big)
e^{-2\beta[-\vartheta_{k}-3n^{-1/3}]_{+}}+2n^{2/3}, \qquad \sigma\in A_{k},
\end{aligned}
\label{eq: brate}
\end{equation}
where
\begin{equation}
\label{eq:thetakdef}
\vartheta_{k}=p\left(1-\tfrac{2k}{n}\right)-h.
\end{equation}
(b) For $n-\tfrac{n}{3}(p+h)\leq k< n$,
\begin{equation}
\sum_{\xi\in A_{k+1}}r\left(\sigma,\xi\right)=n-k, \qquad \sigma\in A_{k}.
\label{eq:frate at end}
\end{equation}
(c) For $0 < k \leq \tfrac{n}{3}(p-h)$,
\begin{equation}
\sum_{\xi\in A_{k-1}}r\left(\sigma,\xi\right)=k, \qquad \sigma\in A_k.
\label{eq:frate at end alt}
\end{equation}
\end{lemma}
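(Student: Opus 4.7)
The plan is as follows. First I would compute the energy increment $\Delta H(v) := H_n(\sigma') - H_n(\sigma)$ explicitly, where $\sigma'$ agrees with $\sigma$ except at $v$. For $\sigma \in A_k$ and $v \notin \sigma$, writing $d^+(v) = |\{u \in \sigma : u \sim v\}|$, equation \eqref{eq:Hn} gives
\[
\Delta H(v) = -\tfrac{2}{n}\bigl(2 d^+(v) - \deg(v)\bigr) - 2h = 2\vartheta_k - \tfrac{4 \delta_v}{n} + \tfrac{2 \eta_v}{n},
\]
where $\delta_v := d^+(v) - pk$ and $\eta_v := \deg(v) - pn$. The degree bound \eqref{eq:degree conc. ineq} gives $|\eta_v|/n = o(n^{-1/3})$ uniformly in $v$, so the $\eta_v$-term can be absorbed into the $3n^{-1/3}$ slack in the statement. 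An analogous computation for $v \in \sigma$ gives $\Delta H(v) = -2\vartheta_k + \tfrac{4\delta_v}{n} - \tfrac{2\eta_v}{n}$.

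The crux is then the combinatorial claim that, with $\mathbb{P}_{\ER_n(p)}$-probability tending to $1$, for every $k \in [2n^{1/3}, n - 2n^{1/3}]$ and every $\sigma \in A_k$,
\[
\bigl|\{v \notin \sigma : |\delta_v| > \tfrac{3}{2}n^{2/3}\}\bigr| \leq 2n^{2/3},
\]
together with an analogous bound for $v \in \sigma$. I would prove this by noting that, for fixed $\sigma$, the variables $\{d^+(v)\}_{v \notin \sigma}$ are independent $\mathrm{Bin}(k,p)$'s (the underlying edge sets being disjoint across $v$), so Hoeffding gives $\mathbb{P}[|\delta_v| > \tfrac{3}{2}n^{2/3}] \leq 2 e^{-(9/2) n^{1/3}}$ per vertex, and a Chernoff tail of the form $\mathbb{P}[X \geq t] \leq (e\mu/t)^t$ applied to the Bernoulli count yields per-$\sigma$ failure probability at most $e^{-cn}$ with $c$ comfortably exceeding $\log 2$. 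A union bound over the $\binom{n}{k} \leq 2^n$ configurations (and the $O(n)$ values of $k$) then closes the claim. For the backward case one works with $d^-(v) = |\{u \notin \sigma : u \sim v\}|$ for $v \in \sigma$, which is again independent across $v$, and translates the bound on $\delta_v$ via $d^+(v) = \deg(v) - d^-(v)$ and the degree bound.

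Granting this claim, the bounds \eqref{eq:frate} and \eqref{eq: brate} follow by splitting $\sum_{v \notin \sigma} e^{-\beta[\Delta H(v)]_+}$ into a sum over \emph{good} vertices ($|\delta_v| \leq \tfrac{3}{2}n^{2/3}$) and \emph{bad} vertices. Good vertices satisfy $|\Delta H(v) - 2\vartheta_k| \leq 6 n^{-1/3}$ for large $n$, so each such term lies between $e^{-2\beta[\vartheta_k + 3n^{-1/3}]_+}$ and $e^{-2\beta[\vartheta_k - 3n^{-1/3}]_+}$; bad vertices contribute at most $1$ and at least $0$. Worst-case accounting with at most $2n^{2/3}$ bad vertices then produces both bounds in \eqref{eq:frate}, and \eqref{eq: brate} follows by the symmetric argument on $v \in \sigma$.

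Parts (b) and (c) require no bad-vertex machinery. For (b), when $k \geq n - \tfrac{n}{3}(p+h)$, every $v \notin \sigma$ satisfies $d^+(v) \geq \deg(v) - (n-k)$; combining with $\deg(v) \geq pn - (1+\epsilon)\sqrt{n\log n}$ gives $2d^+(v) - \deg(v) \geq -nh$ for large $n$, hence $\Delta H(v) \leq 0$, every rate equals $1$, and the sum equals $n-k$. Part (c) is symmetric via $d^+(v) \leq k-1$ for $v \in \sigma$. The main obstacle throughout is precisely the uniform-in-$\sigma$ step: per-vertex Hoeffding only delivers $e^{-\Theta(n^{1/3})}$, which is defeated by the $\binom{n}{k} \sim 2^n$ union bound, and only a Chernoff tail on the full bad-vertex count, producing $e^{-\Theta(n)}$ per configuration, is strong enough to survive the union bound.
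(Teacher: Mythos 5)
Your proof is correct and follows essentially the same strategy as the paper's: compute the energy increment via the boundary formula, absorb the degree fluctuation using \eqref{eq:degree conc. ineq}, classify vertices as good or bad according to whether $d^+(v)$ deviates from $pk$ by more than order $n^{2/3}$, show the bad count is at most $2n^{2/3}$ with per-configuration failure probability exponentially small in $n$, and union-bound over all $2^n$ configurations. The paper bounds the bad-count event by union-bounding over all subsets $\zeta$ of $2n^{2/3}$ bad vertices (obtaining $\binom{n-k}{2n^{2/3}}(e^{-2n^{1/3}})^{2n^{2/3}}\le 2^n e^{-4n}$), which is precisely the $(e\mu/t)^t$ Chernoff tail you invoke, so the mechanics are interchangeable; you also correctly note the independence structure that makes this work ($d^+(v)$ across $v\notin\sigma$, and $d^-(v)$ across $v\in\sigma$), which the paper leaves implicit. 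One small quantitative point: with threshold $\tfrac32 n^{2/3}$ on $|\delta_v|$ the term $4|\delta_v|/n$ already exhausts the full $6n^{-1/3}$ budget, leaving nothing for the $2|\eta_v|/n$ contribution; you should take a threshold slightly below $\tfrac32 n^{2/3}$ (or simply $n^{2/3}$, as the paper does) so that both error terms fit inside $6n^{-1/3}$.
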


\begin{proof}
The proof is via probabilistic counting. 

\medskip\noindent
(a) Write $\mathbb{P}$ for the law under which $\sigma \in S_n$ is a uniformly random configuration 
and $v \in \overline{\sigma}$ is a uniformly random vertex. By Hoeffding's inequality, the probability 
that $v$ has more than $p\left|\sigma\right|+n^{2/3}$ neighbours in $\sigma$ (i.e., $w\in V$ such 
that $\left(v,w\right)\in E$ and $\sigma\left(w\right)=+1$) is bounded by
\begin{equation}
\mathbb{P}\left[\left|E(v,\sigma)\right|
\geq p\left|\overline{\sigma}\right|+n^{2/3}\right] \leq e^{-2n^{1/3}}.
\end{equation}
where
\begin{equation}
\label{eq:Evdef}
E(v,\sigma) = \left\{w\in\sigma\colon\,\left(v,w\right)\in E\right\}.
\end{equation}
Define the event 
\begin{equation}
\label{R+cond}
R^{+}\left(\sigma\right)=\left\{ \exists\,\zeta\subseteq\overline{\sigma},\,
\zeta\in A_{2n^{2/3}}\colon\,\left|E(v,\sigma)\right|\geq p\left|\sigma\right|+n^{2/3}\,\,\forall\, v\in\zeta,\right\},
\end{equation}
i.e., the configuration $\overline{\sigma}$ has at least $2n^{2/3}$ vertices like $v$, each with at least 
$p\left|\sigma\right|+n^{2/3}$ neighbours in $\sigma$. Then, for $0 \leq k\leq n-2n^{2/3}$,
\begin{equation}
\mathbb{P}\left[R^{+}\left(\sigma\right)\right]\leq{\left|\overline{\sigma}\right| 
\choose 2n^{2/3}}\left(e^{-2n^{1/3}}\right)^{2n^{2/3}}\leq2^{n} e^{-4n}.
\label{eq:prob many pop neighbours}
\end{equation}
Hence the probability that some configuration $\sigma \in S_n$ satisfies condition $R^+(\sigma)$ is 
bounded by 
\begin{equation}
\mathbb{P}\left[\bigcup_{\sigma\in S_{n}}R^{+}\left(\sigma\right)\right]\leq4^{n}
e^{-4n} \leq e^{-2n}.
\label{eq:probl R union}
\end{equation}
Thus, with $\mathbb{P}_{\ER_n(p)}$-probability tending to $1$ as $n\to\infty$ there are no 
configurations $\sigma \in S_{n}$ satisfying condition $R^{+}(\sigma)$. The 
same holds for the event 
\begin{equation}
\label{eq:R- def}
R^{-}\left(\sigma\right)=\left\{ \exists\,\zeta\subseteq\overline{\sigma},\,\zeta\in A_{2n^{2/3}}
\colon\,\left|E(v,\sigma)\right|
\leq p\left|\sigma\right|-n^{2/3}\,\,\forall\, v\in\zeta\right\},
\end{equation}
for which
\begin{equation}
\mathbb{P}\left[\,\bigcup_{\sigma\in S_{n}}R^{-}\left(\sigma\right)\right]
\leq e^{-2n}.
\label{eq:R- union}
\end{equation}
Now let $\sigma\in A_{k}$, and observe that $\sigma$ has $n-k$ neighbours in $A_{k+1}$ and 
$k$ neighbours in $A_{k-1}$. But if $\xi=\sigma\cup\left\{ v\right\} \in A_{k+1}$, then by 
\eqref{eq:Hn as boundary size},
\begin{eqnarray}
H_{n}\left(\xi\right)-H_{n}\left(\sigma\right) 
& = & \tfrac{2}{n}\Big(\left|E(v,\overline{\sigma})\right|-\left|E(v,\sigma)\right|\Big)-2h\\ \nonumber
& = & \tfrac{2}{n}\big(\deg\left(v\right)-2\left|E(v,\sigma)\right|\big)-2h\\ \nonumber
& \leq & \tfrac{2}{n}\big(pn+n^{1/2}\log n-2\left|E(v,\sigma)\right|\big)-2h,
\label{eq:H change up}
\end{eqnarray}
where the last inequality uses (\ref{eq:degree conc. ineq}) with $\varrho(n) = \log n$. Similarly,
\begin{eqnarray}
H_{n}\left(\xi\right)-H_{n}\left(\sigma\right) 
& \geq & \tfrac{2}{n}\left(pn-n^{1/2}\log n -2\left|E(v,\sigma)\right|\right)-2h.
\label{eq:H change down}
\end{eqnarray}
The events $R^{+}(\sigma)$ in \eqref{R+cond} and $R^{-}(\sigma)$ in \eqref{eq:R- def} guarantee 
that for any configuration $\sigma$ at most $2n^{2/3}$ vertices in the configuration $\overline{\sigma}$ 
can have more than $n^{2/3}$ neighbours in $\sigma$. In other words, the configuration $\sigma$ has 
at most $2n^{2/3}$ neighbouring configurations in $A_{k+1}$ that differ in energy by more than $
6n^{-1/3} - 2h$. Since on the complement of $R^+(\sigma)$ with $\sigma \in A_k$ we have $|\{w\in
\sigma\colon\,(v,w)\in E\}| \leq 2pk+2n^{1/3}$ (because $n^{1/2}\log n \leq n^{2/3}$ for $n$ large 
enough), from (\ref{eq:probl R union}) and (\ref{eq:R- union}) we get that, with $\mathbb{P}_{\ER_n
(p)}$-probability at least $1-e^{-2n}$, 
\begin{equation}
\begin{aligned}
&\left|\left\{ \xi\in A_{k+1}\colon\,\xi\sim\sigma,\,
H_{n}\left(\xi\right)-H_{n}\left(\sigma\right)\geq\tfrac{2}{n}
\left(pn-2pk+3n^{2/3}\right)-2h\right\} \right| \leq 2n^{2/3},\\
&\left|\left\{ \xi\in A_{k+1}\colon\,\xi\sim\sigma,\, 
H_{n}\left(\xi\right)-H_{n}\left(\sigma\right)\leq\tfrac{2}{n}
\left(pn-2pk-3n^{2/3}\right)-2h\right\} \right|\leq2n^{2/3},
\end{aligned}
\end{equation}
and hence, by \eqref{eq:rate r}, the rate at which the Markov chain starting at $\sigma \in A_k$ jumps to 
$A_{k+1}$ satisfies
\begin{eqnarray}
\label{eq:bdnrateup1}
&&\sum_{\xi\in A_{k+1}}r\left(\sigma,\xi\right)\geq\big(n-k-2n^{2/3}\big)
e^{-2\beta[\theta_k+3n^{-1/3}]_{+}},\\
&&\sum_{\xi\in A_{k+1}}r\left(\sigma,\xi\right)\leq\big(n-k-2n^{2/3}\big)
e^{-2\beta[\theta_k-3n^{-1/3}]_{+}}+2n^{2/3}.
\label{eq:bndrateup2}
\end{eqnarray}
Here the term $n-k-2n^{2/3}$ comes from exclusion of the at most $2n^{2/3}$ neighbours in configurations
that differ from $\sigma$ in energy by more than $6n^{-1/3}-2h$. Similarly, with $\mathbb{P}_{\ER_n
(p)}$-probability at least $1-e^{-2n}$,
\begin{equation}
\begin{aligned}
\left|\left\{ \xi\in A_{k-1}\colon\,\xi\sim\sigma, \, H_{n}\left(\xi\right)-H_{n}
\left(\sigma\right)\geq\tfrac{2}{n}\left(-pn+2pk+3n^{2/3}\right)+2h\right\} \right| \leq 2n^{2/3},\\
\left|\left\{ \xi\in A_{k-1}\colon\,\xi\sim\sigma, \, H_{n}\left(\xi\right)-H_{n}\left(\sigma\right)
\leq\tfrac{2}{n}\left(-pn+2pk-3n^{2/3}\right)+2h\right\} \right| \leq 2n^{2/3},
\end{aligned}
\end{equation}
and hence, by \eqref{eq:rate r}, the rate at which the Markov chain starting at $\sigma\in A_k$ 
jumps to $A_{k-1}$ satisfies
\begin{eqnarray}
\label{eq:bndratedown1}
&&\sum_{\xi\in A_{k-1}}r\left(\sigma,\xi\right)\leq\big(k-2n^{2/3}\big)
e^{-2\beta[-\theta_k-3n^{-1/3}]_{+}}+2n^{2/3},\\
&&\sum_{\xi\in A_{k-1}}r\left(\sigma,\xi\right)\geq\big(k-2n^{2/3}\big)
e^{-2\beta[-\theta_k+3n^{-1/3}]_{+}}.
\label{eq:bndratedown2}
\end{eqnarray}
This proves (\ref{eq:frate}) and (\ref{eq: brate}). 

\medskip\noindent
(b) To get (\ref{eq:frate at end}), note that for $\xi = \sigma \cup \{v\}$ with $v \notin \sigma$,
\begin{eqnarray}
H_{n}\left(\xi\right)-H_{n}\left(\sigma\right) 
& = & \tfrac{2}{n}\Big(\left|E(v,\overline{\sigma})\right|-\left|E(v,\sigma)\right|\Big)-2h\\ \nonumber
& = & \tfrac{2}{n}\Big(2\left|E(v,\sigma)\right|-\deg\left(v\right)\Big)-2h\\ \nonumber
& \leq & 2\left(2(n-k)-p+n^{-1/2}\log n -h\right)
\end{eqnarray}
for $n$ large enough, which is $\leq 0$ when $n-k \leq \tfrac{n}{3}(p+h)$, so that $r\left(\sigma,\xi\right)=1$ 
by \eqref{eq:rate r}. 

\medskip\noindent
(c) To get (\ref{eq:frate at end alt}), note that for $\xi = \sigma \setminus \{v\}$ with $v \notin \sigma$,
\begin{eqnarray}
H_{n}\left(\xi\right)-H_{n}\left(\sigma\right) 
& = & \tfrac{2}{n}\Big(\left|E(v,\sigma)\right|-\left|E(v,\overline{\sigma})\right|\Big)+2h\\ \nonumber
& = & \tfrac{2}{n}\Big(2\left|E(v,\sigma)\right|-\deg\left(v\right)\Big)+2h\\ \nonumber
& \leq & 2\left(2k-p+n^{-1/2}\log n+h\right)
\end{eqnarray}
for $n$ large enough, which is $\leq 0$ when $k \leq \tfrac{n}{3}(p-h)$, so that $r\left(\sigma,\xi\right)=1$ 
by \eqref{eq:rate r}. 
\end{proof}

The following lemma is technical and merely serves to show that near $A_{\mathbf{M}}$ transitions 
involving a flip from $-1$ to $+1$ typically occur at rate 1.

\begin{lemma}[{\bf Attraction towards the metastable state}]
\label{lem:H near m}
$\mbox{}$\\
Suppose that $\left|\xi\right|=[1+o_{n}(1)]\,\mathbf{M}$. Then $r\left(\xi,\xi^{v}\right)=1$
for all but $O(n^{2/3})$ many $v\in\xi$.
\end{lemma}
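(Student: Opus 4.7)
The plan is direct: for any $v\in\xi$ I would compute the energy difference upon flipping the $+1$-spin at $v$, show that it is strictly negative for all but $O(n^{2/3})$ such vertices, and invoke \eqref{eq:rate r} to conclude $r(\xi,\xi^v)=1$ for those $v$.

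Starting from \eqref{eq:Hn as boundary size} (or by a one-line direct computation using $|\partial_E\xi^v|-|\partial_E\xi|=2|E(v,\xi)|-\deg(v)$ when $v\in\xi$), I get
\begin{equation*}
H_n(\xi^v)-H_n(\xi) = \tfrac{2}{n}\bigl(2\left|E(v,\xi)\right|-\deg(v)\bigr)+2h, \qquad v\in\xi.
\end{equation*}
Lemma~\ref{lem: degree bound} gives $\deg(v)=pn+O(\sqrt{n\log n})$ uniformly in $v$. To control $\left|E(v,\xi)\right|$, I would transfer the bound behind the events $R^{\pm}(\sigma)$ in the proof of Lemma~\ref{lem: Rate bounds} from $\overline{\sigma}$ to $\sigma$ itself: for every fixed $\sigma\in S_n$ and every $v\in\sigma$, the count $|E(v,\sigma)|$ is $\mathrm{Bin}(|\sigma|-1,p)$-distributed under $\mathbb{P}_{\ER_n(p)}$, so Hoeffding's inequality together with a union bound of the shape \eqref{eq:prob many pop neighbours}--\eqref{eq:R- union} shows that, with $\mathbb{P}_{\ER_n(p)}$-probability tending to $1$, every $\sigma\in S_n$ has at most $2n^{2/3}$ vertices $v\in\sigma$ with $\bigl||E(v,\sigma)|-p|\sigma|\bigr|\geq n^{2/3}$.

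Applied to our $\xi$ with $|\xi|=[1+o_n(1)]\mathbf{M}$ and $\mathbf{M}/n=\tfrac{1}{2}(1+\mathbf{m})+o(1)$, every ``good'' vertex $v\in\xi$ satisfies
\begin{equation*}
H_n(\xi^v)-H_n(\xi) = 2(p\mathbf{m}+h)+o(1).
\end{equation*}
The metastability assumption \eqref{eq:metreg} together with $J_{p,\beta,h}(\mathbf{m})=0$ (cf.\ Lemma~\ref{lem:doublewell}) yields $2\beta(p\mathbf{m}+h)=\log\frac{1+\mathbf{m}}{1-\mathbf{m}}<0$, because $\mathbf{m}<0$. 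Hence $p\mathbf{m}+h<0$ strictly, so for $n$ large the energy change is negative on every good vertex and \eqref{eq:rate r} gives $r(\xi,\xi^v)=1$. The excluded ``bad'' vertices number at most the $O(n^{2/3})$ coming from the concentration step for $|E(v,\xi)|$; the degree concentration from Lemma~\ref{lem: degree bound} is uniform in $v$ and contributes no additional exceptions.

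There is no substantive obstacle beyond transferring the $R^{\pm}$-type counting from $\overline{\sigma}$ to $\sigma$, which is a bookkeeping exercise since the Binomial distribution of $|E(v,\sigma)|$ does not depend on whether $v\in\sigma$ or $v\notin\sigma$.
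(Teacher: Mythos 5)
Your proof is correct and follows essentially the same route as the paper: both rewrite $H_n(\xi^v)-H_n(\xi)$ in terms of local edge counts (you via $\deg(v)$ and $|E(v,\xi)|$, the paper via $|E(v,\overline{\xi})|$ and $|E(v,\xi)|$, interchangeable since $\deg(v)=|E(v,\xi)|+|E(v,\overline{\xi})|$), both invoke the $R^{\pm}$-type concentration to conclude the energy change equals $2(p\mathbf{m}+h)+o_n(1)$ for all but $O(n^{2/3})$ vertices $v\in\xi$, and both close by noting that $J_{p,\beta,h}(\mathbf{m})=0$ together with $\mathbf{m}<0$ forces $p\mathbf{m}+h<0$. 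One minor caveat on your final remark: when transferring the $R^{\pm}$-type union bound from vertices in $\overline{\sigma}$ to vertices in $\sigma$, the family $\{|E(v,\sigma)|\}_{v\in\zeta}$ for a block $\zeta\subseteq\sigma$ is not jointly independent (each pair $v,v'\in\zeta$ shares the potential edge $(v,v')$), so it is not literally ``the Binomial distribution is the same''; this is harmless because $|\zeta|=2n^{2/3}$ contributes only $O(n^{4/3})$ shared edge slots and is patched by a constant-factor shift of the $n^{2/3}$ threshold, or more cleanly by deducing control of $|E(v,\xi)|$ from $\deg(v)$ and $|E(v,\overline{\xi})|$ exactly as you did in the main computation.
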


\begin{proof}
We want to show that 
\begin{equation}
H_{n}\left(\xi^{v}\right)<H_{n}\left(\xi\right)\label{eq:H xiv less than xi}
\end{equation}
for all but $O(n^{2/3})$ many $v\in\xi$. Note that by \eqref{eq:R- def} and \eqref{eq:R- union} there 
are at most $2n^{2/3}$ many $v\in\xi$ such that $|E(v,\overline{\xi})|\leq p(n-|\xi|)-n^{2/3}$, and at 
most $2n^{2/3}$ many $v\in\xi$ such that $|E(v,\xi)| \geq p|\xi|+n^{2/3}$. Hence, by \eqref{eq:Hn as 
boundary size}, for all but at most $4n^{2/3}$ many $v\in\xi$ we have that
\begin{eqnarray}
H_{n}\left(\xi^{v}\right) & = & H_{n}\left(\xi\right)
+\tfrac{2}{n}\left(\left|E\left(v,\xi\right)\right|-\left|E\left(v,\overline{\xi}\,\right)\right|\right)+2h\\ \nonumber
 & = & H_{n}\left(\xi\right)+\tfrac{2p}{n}\left(2\left|\xi\right|-n\right)+2h+o_{n}(1)\\ \nonumber
 & = & H_{n}\left(\xi\right)+\tfrac{2p}{n}\left(2\mathbf{M}-n\right)+2h+o_{n}(1)\\ \nonumber
 & = & H_{n}\left(\xi\right)+2p\mathbf{m}+2h+o_{n}(1),
\end{eqnarray}
where we use \eqref{eq:MTSdef}. From the definition of $\mathbf{m}$ in \eqref{eq: mts} it follows that 
$2p\mathbf{m}+2h+o_{n}(1)<0$, where we recall from the discussion near the end of Section 
\ref{ss:metaperturbedCW} that $\mathbf{m}<0$ and hence $\log(\frac{1-\mathbf{m}}{1+\mathbf{m}}) > 0$. 
Hence \eqref{eq:H xiv less than xi} follows.
\end{proof}

We can now prove the claim made in Remark \ref{rem:hbounds}, namely, there is no metastable 
behaviour outside the regime in \eqref{eq:metreg}. Recall the definition of $\mathbf{S}_n$ in \eqref{eq:MTSdef}, 
which requires the function $J$ in \eqref{Jadef} to have two zeroes. If it has only one zero, then 
denote that zero by $a'$ and define $\mathbf{S}_n=\frac{n}{2}(a'+1)$. Let $A_{\mathbf{S}_n+O(n^{2/3})}$ 
be the union of all $A_{k}$ with $|k-\mathbf{S}_n|=O(n^{2/3})$.

\begin{corollary}[{\bf Non-metastable regime}]
\label{cor:pleqh} 
Suppose that $\beta \in (1/p,\infty)$ and $h \in (p,\infty)$. Then $\{\xi_{t}\}_{t\geq0}$ has a drift 
towards $A_{\mathbf{S}_n+O(n^{2/3})}$. Consequently, $\mathbb{E}_{\xi_{0}}[\tau_{\mathbf{s}}]
=  O(1)$ for any initial configuration $\xi_{0}\in S_{n}$. 
\end{corollary}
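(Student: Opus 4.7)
The plan is to exploit the hypothesis $h > p$ via Lemma~\ref{lem: Rate bounds} to establish a uniform drift of $\{|\xi_t|\}$ toward $A_{\mathbf{S}_n+O(n^{2/3})}$ from every starting configuration, and then to bound the hitting time via stochastic domination by a Markov birth-death chain.

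First, I would verify that $J_{p,\beta,h}$ has a unique zero $\mathbf{s} \in (-1,1)$. From $J'(a) = 2\beta p - 2/(1-a^2)$, the critical points of $J$ are $a_\pm = \pm\sqrt{1-1/(\beta p)}$ (a local minimum and local maximum, respectively), and the computation at the end of the proof of Lemma~\ref{lem:doublewell} gives $J(a_-) > 0 \iff h > p\chi(\beta p)$. Since $\chi(\lambda) \leq 1$ for all $\lambda \geq 1$ and $h > p$, this yields $J(a_-) > 0$; combined with $J(-1^+) = +\infty$ and $J(+1^-) = -\infty$, this forces a single zero $\mathbf{s} \in (a_+,1)$, with $J > 0$ on $(-1,\mathbf{s})$ and $J < 0$ on $(\mathbf{s},1)$. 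This pins down $\mathbf{S}_n$ as declared in the corollary.

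Next, I would insert the uniform bound $\vartheta_k = p(1-2k/n) - h \leq p - h < 0$ into Lemma~\ref{lem: Rate bounds}(a), giving forward rate $\geq n - k - 2n^{2/3}$ (the positive-part exponent vanishing for large $n$) and backward rate $\leq k\,e^{-2\beta(h-p+2pk/n)+o_n(1)} + 2n^{2/3}$. Setting $a = 2k/n - 1$ and using the identity $((1+a)/(1-a))e^{-2\beta(h+pa)} = e^{-J_{p,\beta,h}(a)}$, the net drift satisfies
\[
d(k) \,\geq\, \tfrac{n}{2}(1-a)\bigl(1 - e^{-J_{p,\beta,h}(a)}\bigr) - 4n^{2/3},
\]
which is $\Omega(n)$ for $a$ bounded away from $\mathbf{s}$ and $\Omega(n^{2/3})$ for $k \leq \mathbf{S}_n - Cn^{2/3}$ with $C$ sufficiently large. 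The symmetric estimate yields a backward drift of matching magnitude for $a > \mathbf{s}$, so the process is attracted to $A_{\mathbf{S}_n+O(n^{2/3})}$ from both sides. The boundary regimes $k < 2n^{1/3}$ and $k > n - 2n^{1/3}$ are handled by part~(b) of Lemma~\ref{lem: Rate bounds} and by the elementary bound $\sum_\xi r(\sigma,\xi) \leq k$ on the backward rate.

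Finally, I would couple $\{|\xi_t|\}$ with a Markov birth-death chain $\{Y_t\}$ on $\{0,\ldots,n\}$ whose transition rates are exactly the uniform lower (resp.\ upper) bounds on the forward (resp.\ backward) rates derived above, via shared Poisson clocks, so that $Y_t \leq |\xi_t|$ whenever $|\xi_0| < \mathbf{S}_n$ and $Y_t \geq |\xi_t|$ whenever $|\xi_0| > \mathbf{S}_n$. A standard Lyapunov or direct birth-death computation for $Y_t$, using $V(k) = |R_{p,\beta,h}(2k/n-1)|$ together with the drift bound above, gives expected hitting time $O(1)$ for $A_{\mathbf{S}_n+O(n^{2/3})}$, which passes to $|\xi_t|$ by the coupling. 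The main obstacle is precisely that $\{|\xi_t|\}$ is not Markovian on $\ER_n(p)$, so birth-death chain formulas do not apply directly; this is circumvented by the coupling with $\{Y_t\}$, valid because the rate bounds in Lemma~\ref{lem: Rate bounds} hold uniformly in the configuration $\sigma \in A_k$ (not merely in $k$) with $\mathbb{P}_{\ER_n(p)}$-probability tending to $1$.
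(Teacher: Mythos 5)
Your proposal is correct and follows the same broad strategy as the paper's proof: use the sign condition $\vartheta_k\le p-h<0$ to read off forward and backward jump-rate bounds from Lemma~\ref{lem: Rate bounds}, deduce a positive net drift in volume, and then convert drift to $O(1)$ hitting time by coupling with a one-dimensional birth-death chain. The main difference is that your drift analysis is quantitatively sharper. The paper's proof only compares the crude bounds $n-k-2n^{2/3}$ and $k+2n^{2/3}$, which establishes positive drift for $k\le \tfrac{n}{2}-4n^{2/3}$, and then asserts the opposite sign for $k\ge \tfrac{n}{2}+4n^{2/3}$; since $\mathbf{S}_n=\tfrac{n}{2}(\mathbf{s}+1)>\tfrac{n}{2}$ when $h>p$, that second assertion is not literally justified by those bounds (the drift is in fact still positive for $k\in(\tfrac{n}{2}+4n^{2/3},\mathbf{S}_n-Cn^{2/3})$). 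Your route avoids this: by inserting $k=\tfrac{n}{2}(1+a)$ and the identity $\tfrac{1+a}{1-a}\,e^{-2\beta(pa+h)}=e^{-J_{p,\beta,h}(a)}$, you obtain the drift lower bound $\tfrac{n}{2}(1-a)\bigl(1-e^{-J_{p,\beta,h}(a)}\bigr)-O(n^{2/3})$, which vanishes precisely at $a=\mathbf{s}$ and is $\Omega(n^{2/3})$ at distance $Cn^{2/3}$ from $\mathbf{S}_n$. Your uniqueness argument for the root is also slightly different — you invoke the critical-point computation from the proof of Lemma~\ref{lem:doublewell} and the bound $\chi\le 1$, whereas the paper argues directly from sign considerations on $[-1,0]$ and concavity on $[0,1]$ — but both are valid. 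One small caveat: part~(c) of Lemma~\ref{lem: Rate bounds} requires $k\le\tfrac{n}{3}(p-h)$, which is vacuous when $h>p$, so for $k<2n^{1/3}$ you should rely solely on the trivial bound $\sum_{\xi\in A_{k-1}}r(\sigma,\xi)\le k$ together with the fact that the process leaves that thin boundary layer in negligible time; this does not affect the conclusion. Overall your argument is complete and actually tighter than the one given in the paper.
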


\begin{proof}
If $\beta \in (1/p,\infty)$ and $h \in (p,\infty)$, then the function $a \mapsto J_{p,\beta,h}(a)=2\beta(pa+h)
+\log(\frac{1-a}{1+a})$ has a unique root in the interval $(0,1)$. Indeed, $J_{p,\beta,h}(a)>0$ for $a \in 
[-1,0]$, $J_{p,\beta,h}'(0) = 2(\beta p-1)>0$, while $a \mapsto \log(\frac{1-a}{1+a})$ is concave and tends 
to $-\infty$ as $a \uparrow1$. We claim that the process $\{\xi_t\}_{t \geq 0}$ drifts towards that root, i.e., 
if we denote the root by $a'$, then the process drifts towards the set $A_{\frac{n}{2}(a'+1)}$, which by 
convention we identify with $A_{S_n}$. Note that if $h \in (p,\infty)$, then $\vartheta_{k}=p(1-\frac{2k}{n})
-h<0$ for all $0\leq k\leq n$ and so, by Lemma~\ref{lem: Rate bounds},
\begin{eqnarray}
\sum_{\xi\in A_{k+1}}r\left(\sigma,\xi\right) & \geq & n-k-2n^{2/3},\\ \nonumber
\sum_{\xi\in A_{k-1}}r\left(\sigma,\xi\right) & \leq & \big(k-2n^{2/3}\big)
e^{-2\beta[-\vartheta_k-3n^{-1/3}]}+2n^{2/3}.
\end{eqnarray}
Thus, for $k\leq\frac{n}{2}-4n^{2/3}$, $\sum_{\xi\in A_{k+1}}r(\sigma,\xi) > \sum_{\xi\in A_{k-1}}
r(\sigma,\xi)$. Similarly, for $k \geq \frac{n}{2} + 4n^{2/3}$, the opposite inequality holds. Therefore
there is a drift towards $A_{\mathbf{S}_n+O(n^{2/3})}$. 
\end{proof}

We close this section with a lemma stating that the average return time to $A_{\mathbf{M}_n}$ 
conditional on not hitting $A_{\mathbf{T}_n}$ is of order $1$ and has an exponential tail. This will
be needed to control the time between successive attempts to go from $A_{\mathbf{M}_n}$ to
$A_{\mathbf{T}_n}$, until the dynamics crosses $A_{\mathbf{T}_n}$ and moves to $A_{\mathbf{S}_n}$
(recall Fig.~\ref{fig:CWfe}).    

\begin{lemma}[{\bf Conditional return time to the metastable set}]

\label{lem:returntail}
There exist a $C>0$ such that, with $\mathbb{P}_{\ER_n(p)}$-probability tending to $1$ as $n\to\infty$,
uniformly in $\xi \in A_{\mathbf{M}_n}$,
\begin{equation}
\label{eq:exptailER}
\mathbb{P}_\xi\left[\tau_{A_{\mathbf{M}_n}} \geq k \mid \tau_{A_{\mathbf{M}_n}} < \tau_{A_{\mathbf{T}_n}} \right]
\leq e^{-Ck}, \qquad \forall\, k.
\end{equation}
\end{lemma}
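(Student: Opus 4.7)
The plan is to split off the conditioning. Observe that on the event
\[
B:=\{\tau_{A_{\mathbf{M}_n}}\geq k,\,\tau_{A_{\mathbf{M}_n}}<\tau_{A_{\mathbf{T}_n}}\}
\]
one has $\tau_{A_{\mathbf{T}_n}}>\tau_{A_{\mathbf{M}_n}}\geq k$, so $B\subseteq\{T\geq k\}$ with $T:=\tau_{A_{\mathbf{M}_n}}\wedge\tau_{A_{\mathbf{T}_n}}$. Bounding
\[
\mathbb{P}_\xi\bigl[\tau_{A_{\mathbf{M}_n}}\geq k\mid \tau_{A_{\mathbf{M}_n}}<\tau_{A_{\mathbf{T}_n}}\bigr]\leq \frac{\mathbb{P}_\xi[T\geq k]}{\mathbb{P}_\xi[\tau_{A_{\mathbf{M}_n}}<\tau_{A_{\mathbf{T}_n}}]},
\]
reduces the claim to establishing (i) an exponential tail $\mathbb{P}_\xi[T\geq k]\leq C_1 e^{-\gamma k}$, with $\gamma,C_1>0$ independent of $n$ and $\xi\in A_{\mathbf{M}_n}$, together with (ii) a uniform lower bound $\mathbb{P}_\xi[\tau_{A_{\mathbf{M}_n}}<\tau_{A_{\mathbf{T}_n}}]\geq c>0$. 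The desired bound follows after a slight shrinking of the exponential rate to absorb the prefactor $C_1/c$.

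For (ii), after the first spin-flip out of $A_{\mathbf{M}_n}$ the process enters $A_{\mathbf{M}_n\pm 1}$, and the rate estimates of Lemma~\ref{lem: Rate bounds} allow one to dominate the volume coordinate by a birth-death chain whose downward-to-upward rate ratio exceeds $1$ by a constant throughout the bulk of $(\mathbf{M}_n,\mathbf{T}_n)$. A standard gambler's-ruin comparison then yields $\mathbb{P}_\xi[\tau_{A_{\mathbf{T}_n}}<\tau_{A_{\mathbf{M}_n}}]\leq e^{-c(\mathbf{T}_n-\mathbf{M}_n)}=e^{-\Omega(n)}$, whence $\mathbb{P}_\xi[\tau_{A_{\mathbf{M}_n}}<\tau_{A_{\mathbf{T}_n}}]\geq 1-o(1)$ for $n$ large.

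For (i), I would run a Foster--Lyapunov argument with the test function $V(\sigma):=\bigl||\sigma|-\mathbf{M}_n\bigr|$. Since every admissible jump changes $|\sigma|$ by $\pm 1$, for $\sigma\in A_k$ with $\mathbf{M}_n<k<\mathbf{T}_n$ one has $\mathcal{L}V(\sigma)=\sum_{\zeta\in A_{k+1}}r(\sigma,\zeta)-\sum_{\zeta\in A_{k-1}}r(\sigma,\zeta)$, and symmetrically for $k<\mathbf{M}_n$. Inserting the bounds of Lemma~\ref{lem: Rate bounds}, using the identity $e^{-2\beta\vartheta_{\mathbf{M}_n}}=(1+\mathbf{m})/(1-\mathbf{m})$ that follows from $J_{p,\beta,h}(\mathbf{m})=0$, and expanding to first order in $k-\mathbf{M}_n$, one obtains
\[
\mathcal{L}V(\sigma)\leq -\gamma V(\sigma)+o_n(1),\qquad \gamma:=\tfrac{2}{1-\mathbf{m}}-2\beta p(1+\mathbf{m}),
\]
whose positivity is exactly the condition $|\mathbf{m}|>\sqrt{1-1/(\beta p)}$ established in the proof of Lemma~\ref{lem:doublewell}. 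Since $\mathcal{L}V$ depends on $\sigma$ only through $|\sigma|$, Dynkin's formula applied to the killed process $V(\xi_{t\wedge T})$ gives $\mathbb{E}_\eta[V(\xi_{t\wedge T})]\leq V(\eta)e^{-\gamma t}$ for $\eta$ in the interior, and since $V\geq 1$ there, Markov's inequality yields $\mathbb{P}_\eta[T>t]\leq V(\eta)e^{-\gamma t}$. For $\xi\in A_{\mathbf{M}_n}$ one absorbs the initial waiting time out of $A_{\mathbf{M}_n}$ (exponential with rate $\Theta(n)$ by Lemma~\ref{lem:H near m}) to deduce (i).

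The main difficulty is the verification of the drift condition uniformly over configurations, since the volume process is not Markov in itself. The saving grace is that the aggregate forward and backward rates in Lemma~\ref{lem: Rate bounds} depend on $\sigma$ only through $|\sigma|$, up to additive errors of order $n^{2/3}$ in prefactors and $n^{-1/3}$ in exponents, both of which are harmless on the scale of the $\Theta(1)$ restoring force $\gamma$. A secondary technical point is that the first-order expansion used to derive $\mathcal{L}V\leq -\gamma V$ is sharp only in a window of width $O(\sqrt{n})$ around $\mathbf{M}_n$; outside that window one must argue separately, using \eqref{eq:diffR=J} and the strict convexity of $R_{p,\beta,h}$ on a neighbourhood of $\mathbf{m}$, that $R^-_k-R^+_k$ remains at least a linear function of $|k-\mathbf{M}_n|$ up to the inflection point of $R_{p,\beta,h}$, beyond which any trajectory is in any case rapidly absorbed.
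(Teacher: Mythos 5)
Your decomposition of the conditional probability as $\mathbb{P}_\xi[T\geq k]/\mathbb{P}_\xi[\tau_{A_{\mathbf{M}_n}}<\tau_{A_{\mathbf{T}_n}}]$ with $T=\tau_{A_{\mathbf{M}_n}}\wedge\tau_{A_{\mathbf{T}_n}}$, and the treatment of the denominator by a gambler's-ruin estimate, is a sound start, and it is a genuinely different route from the paper's: the paper does not drop the conditioning but instead works with the Doob $h$-transformed chain, computing the conditional mean hitting time $e_N(x)=\mathbb{E}_x[\tau_N\mid\tau_N<\tau_0]$ exactly via the formulas of Appendices A.1--A.2, showing $e_N(x)=O(N)=O(n)$ uniformly in $x$, and then applying Chebyshev. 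The problem is with your step~(i). The drift condition $\mathcal{L}V\leq -\gamma V$ with $V(\sigma)=\bigl||\sigma|-\mathbf{M}_n\bigr|$ does not hold throughout $(\mathbf{M}_n,\mathbf{T}_n)$: the restoring drift $\mathcal{L}V(\sigma)=R^+_k-R^-_k$ is proportional to $-R_{p,\beta,h}'(a_k)$, which is zero at $a=\mathbf{t}$ and vanishes linearly there, so that $\mathcal{L}V/V\to 0$ and even the weaker inequality $\mathcal{L}V\leq-\gamma$ fails near $A_{\mathbf{T}_n}$. Since the Dynkin/Gr\"onwall step requires the drift inequality at \emph{every} state the killed process $\xi_{t\wedge T}$ can visit, the bound $\mathbb{E}_\eta[V(\xi_{t\wedge T})]\leq V(\eta)e^{-\gamma t}$ is not justified, and (i) does not follow.

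Your proposed patch does not close the hole. Arguing that $|R^-_k-R^+_k|\gtrsim|k-\mathbf{M}_n|$ up to the inflection point $a_\lambda$ is fine (this follows from concavity of $R'$ on $(\mathbf{m},a_\lambda)$), but the assertion that "beyond [the inflection] any trajectory is in any case rapidly absorbed" is not correct: for $a_\lambda<a<\mathbf{t}$ the drift still points toward $\mathbf{M}_n$, not toward $\mathbf{T}_n$, so no rapid absorption at $A_{\mathbf{T}_n}$ occurs there, and the genuinely problematic region is a neighbourhood of $\mathbf{T}_n$ where the walk is essentially unbiased. The conditioning on $\{\tau_{A_{\mathbf{M}_n}}<\tau_{A_{\mathbf{T}_n}}\}$ — which your reduction throws away — is exactly what cures this: the Doob transform introduces a strong repulsion from $A_{\mathbf{T}_n}$, and the paper's explicit $\pi(y,z]$ computation exploits this. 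To salvage your approach you would need either a Lyapunov function adapted to the double absorption (small near both $A_{\mathbf{M}_n}$ and $A_{\mathbf{T}_n}$, which is delicate because a quadratic test function has the wrong sign of drift past the midpoint), or to stop instead at some $A_{\mathbf{K}_n}$ with $\mathbf{M}_n<\mathbf{K}_n<\mathbf{T}_n$, together with a separate argument handling the exponentially-rare-in-$n$ excursions beyond $A_{\mathbf{K}_n}$; as written, step~(i) is a real gap.
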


\begin{proof}
The proof is given in Appendix \ref{app}.
\end{proof}


\subsection{Uniformity in the starting configuration}
\label{ss:unifstart} 

The following lemma shows that the probability of the event $\{ \tau_{A_{k+o(n^{1/3})}}<\tau_{A_{k}}\}$
is almost uniform as a function of the starting configuration in $A_{k}$.

\begin{lemma}[{\bf Uniformity of hitting probability of volume level sets}]
\label{lem:ratio probab}
$\mbox{}$\\
With $\mathbb{P}_{\ER_n(p)}$-probability tending to $1$ as $n\to\infty$, the following is true. For every 
$0\leq k < m\leq n$,
\begin{equation}
\frac{\max_{\sigma\in A_{k}}\mathbb{P}_{\sigma}\left[\tau_{A_{m}}<\tau_{A_{k}}\right]}
{\min_{\sigma\in A_{k}}\mathbb{P}_{\sigma}\left[\tau_{A_{m}}<\tau_{A_{k}}\right]}
\leq\left[1+o_{n}(1)\right] e^{K(m-k)n^{-1/3}}
\label{eq:hitting times ratio}
\end{equation}
with $K=K(\beta,h,p) \in (0,\infty)$.
\end{lemma}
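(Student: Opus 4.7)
My proof would use first-step analysis at level $k$, then propagate the resulting bound up through the intermediate levels using the rate concentration from Lemma~\ref{lem: Rate bounds}.

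Set $h(\xi)=\mathbb{P}_\xi[\tau_{A_m}<\tau_{A_k}]$ for $\xi\in S_n\setminus A_k$, with $h\equiv 1$ on $A_m$. For $\sigma\in A_k$, note that $h\equiv 0$ on $A_{k-1}$, because any trajectory from $A_{k-1}$ to $A_m$ must re-enter $A_k$ first (volumes change by $\pm 1$). First-step analysis therefore gives
\[
\mathbb{P}_\sigma[\tau_{A_m}<\tau_{A_k}]
=\frac{\sum_{\xi\in A_{k+1}}r(\sigma,\xi)\,h(\xi)}{\sum_{\eta}r(\sigma,\eta)}.
\]
Write $U_j=\sup_{\xi\in A_j}h(\xi)$ and $L_j=\inf_{\xi\in A_j}h(\xi)$. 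Bounding the numerator between $L_{k+1}\sum_{\xi\in A_{k+1}}r(\sigma,\xi)$ and $U_{k+1}\sum_{\xi\in A_{k+1}}r(\sigma,\xi)$, and using Lemma~\ref{lem: Rate bounds} to conclude that both this partial sum and the full denominator are $[1+O(n^{-1/3})]$-uniform in $\sigma\in A_k$, one obtains
\[
\frac{\max_{\sigma\in A_k}\mathbb{P}_\sigma[\tau_{A_m}<\tau_{A_k}]}{\min_{\sigma\in A_k}\mathbb{P}_\sigma[\tau_{A_m}<\tau_{A_k}]}
\leq [1+o_n(1)]\,e^{K_0 n^{-1/3}}\;\frac{U_{k+1}}{L_{k+1}}
\]
for a constant $K_0=K_0(\beta,h,p)$.

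The core work is then to show $U_{k+1}/L_{k+1}\leq e^{K_1(m-k-1)n^{-1/3}}$. I would apply the harmonic identity
\[
h(\xi)\,\bigl[\alpha_j(\xi)+\beta_j(\xi)\bigr]
=\sum_{\xi'\in A_{j+1}}r(\xi,\xi')h(\xi')+\sum_{\xi'\in A_{j-1}}r(\xi,\xi')h(\xi')
\]
at each interior level $k<j<m$, where $\alpha_j(\xi)=\sum_{\xi'\in A_{j+1}}r(\xi,\xi')$ and $\beta_j(\xi)=\sum_{\xi'\in A_{j-1}}r(\xi,\xi')$ are also $[1+O(n^{-1/3})]$-uniform in $\xi\in A_j$ by Lemma~\ref{lem: Rate bounds}. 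The cleanest bookkeeping is to compare with an auxiliary birth-death chain on the volume levels $\{k,\dots,m\}$ with up/down probabilities equal to the common values $\alpha_j/(\alpha_j+\beta_j)$ and $\beta_j/(\alpha_j+\beta_j)$: the classical expression for its probability of hitting $m$ before $k$ starting from $k+1$ is an explicit sum of products of ratios $\beta_j/\alpha_j$ with at most $m-k-1$ factors, so perturbing each factor by $[1+O(n^{-1/3})]$ changes the whole expression by at most $[1+O(n^{-1/3})]^{m-k-1}=e^{O((m-k)n^{-1/3})}$. Combining this with the previous paragraph yields \eqref{eq:hitting times ratio}.

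The main obstacle is that $\{|\xi_t|\}_{t\geq 0}$ is \emph{not} Markov on $\ER_n(p)$, so the birth-death comparison cannot be invoked directly; the configuration dependence of the rates has to be tracked by the quantities $U_j,L_j$, and the level-by-level uniformity from Lemma~\ref{lem: Rate bounds} is exactly what closes the comparison. A secondary subtlety is the degeneracy $L_k=0$: a direct induction of the form $U_j\leq c\,U_{j+1}+c'U_{j-1}$ and $L_j\geq c'\,L_{j+1}+c''L_{j-1}$ fails at $j=k+1$, since it would then involve $L_k=0$. The remedy is to isolate the use of the absorbing condition $h\equiv 0$ on $A_k$ in the separate first-step calculation of the preceding paragraph, leaving only an internal recursion on $\{k+1,\dots,m-1\}$, where both endpoint values are nondegenerate and the level-by-level perturbation $[1+O(n^{-1/3})]$ accumulates harmlessly across the $m-k-1$ intermediate levels.
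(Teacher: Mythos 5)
Your plan is essentially correct and its second half takes a genuinely different route from the paper's. Both proofs reduce the problem to comparing two extremal nearest-neighbour birth--death chains $X^\pm$ on $\{0,\dots,n\}$ whose rates sandwich those of the volume process via Lemma~\ref{lem: Rate bounds}; the paper does this by a pathwise coupling (Step~1 of its proof), and your maximum-principle sandwich of $h$ between the harmonic functions of $X^+$ and $X^-$ via the level-wise envelopes $U_j,L_j$ is logically equivalent --- but this is precisely the step you leave implicit, and it does require spelling out: from the harmonic identity at level $j$ one only gets $h(\xi)\le p(\xi)U_{j+1}+(1-p(\xi))U_{j-1}$ with $p(\xi)=\alpha_j(\xi)/(\alpha_j(\xi)+\beta_j(\xi))$, and to pass to $U_j\le p_j^{+}U_{j+1}+(1-p_j^{+})U_{j-1}$ you first need the monotonicity $U_{j-1}\le U_{j+1}$ so that the supremum over $\xi$ selects the most upward-biased $p(\xi)$; only then does the comparison principle with the birth--death Dirichlet problem close. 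After that reduction the two proofs genuinely diverge: the paper bounds $\mathbb{P}^{X^+}_k[\tau_m<\tau_k]/\mathbb{P}^{X^-}_k[\tau_m<\tau_k]$ by showing that the supremum over trajectories of $\mathbb{P}^{X^+}[\gamma]/\mathbb{P}^{X^-}[\gamma]$ is attained on the direct path $\gamma^\star$ and computing it there (Steps~2--3), whereas you invoke the explicit gambler's-ruin expression and observe that perturbing each ratio $\rho_j=\beta_j/\alpha_j$ by $1+O(n^{-1/3})$ perturbs every product, and hence the sum of products, by at most $e^{O((m-k)n^{-1/3})}$. Both give the required bound; yours is arguably more elementary because it avoids the clever ``direct-path-maximizes'' argument. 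One minor correction: $L_k=U_k=0$ is not a degeneracy but the correct absorbing boundary condition, which the gambler's-ruin formula handles natively; the first-step calculation at $A_k$ is needed only to translate the return probability $\mathbb{P}_\sigma[\tau_{A_m}<\tau_{A_k}]$ with $\sigma\in A_k$ (a first-return event) into the Dirichlet problem for $h$ on $\{k+1,\dots,m-1\}$, not to evade a $0/0$.
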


\begin{proof}
The proof proceeds by estimating the probability of trajectories from $A_k$ to $A_{m}$. 
Observe that 
\begin{eqnarray}
\label{eq:tkest}
e^{-2\beta[\vartheta_{k}+3n^{-1/3}]_{+}} 
& \geq & e^{-2\beta[\vartheta_{k}]_{+}}\left(1-6\beta n^{-1/3}\right),
\quad \forall\,n,\\ \nonumber
e^{-2\beta[\vartheta_{k}-3n^{-1/3}]_{+}} 
& \leq & e^{-2\beta[\vartheta_{k}]_{+}}\left(1+7\beta n^{-1/3}\right),
\quad n \text{ large enough},
\end{eqnarray}
and that similar estimates hold for $e^{-2\beta[-\vartheta_{k}+3n^{-1/3}]_{+}}$ and 
$e^{-2\beta[-\vartheta_{k}-3n^{-1/3}]_{+}}$. We will bound the ratio in the left-hand side 
of (\ref{eq:hitting times ratio}) by looking at two random processes on $\left\{ 0,\ldots,n\right\} $, 
one of which bounds $\max_{\sigma\in A_{k}}\mathbb{P}_{\sigma}\left[\tau_{A_{m}}<\tau_{A_{k}}\right]$ 
from above and the other of which bounds $\min_{\sigma\in A_{k}}\mathbb{P}_{\sigma}\left[\tau_{A_{m}}
<\tau_{A_{k}}\right]$ from below.  The proof comes in 3 Steps.

\medskip\noindent
{\bf 1.}
We begin with the following observation. Suppose that $\{X_{t}^{+}\}_{t \geq 0}$ and $\{X_{t}^{-}\}_{t \geq 0}$ 
are two continuous-time Markov chains taking unit steps in $\left\{0,\ldots,n\right\}$ at rates $r^{-}(k,k \pm 1)$ 
and $r^{+}(k,k \pm 1)$, respectively. Furthermore, suppose that for every $0\leq k\leq n-1$, 
\begin{equation}
\label{eq: bound on rate up}
r^{-}\left(k,k+1\right)\leq\min_{\sigma\in A_{k}}\sum_{\xi\in A_{k+1}}r\left(\sigma,\xi\right)
\leq\max_{\sigma\in A_{k}}\sum_{\xi\in A_{k+1}}r\left(\sigma,\xi\right)\leq r^{+}\left(k,k+1\right),
\end{equation}
and for every $1\leq k \leq n$,
\begin{equation}
r^{-}\left(k,k-1\right)\geq\max_{\sigma\in A_{k}}\sum_{\xi\in A_{k-1}}r\left(\sigma,\xi\right)
\geq\min_{\sigma\in A_{k}}\sum_{\xi\in A_{k-1}}r\left(\sigma,\xi\right)\geq r^{+}\left(k,k-1\right).
\label{eq: bound on rate down}
\end{equation}
Then
\begin{equation}
\frac{\max_{\sigma\in A_{k}}\mathbb{P}_{\sigma}\left[\tau_{A_{m}}<\tau_{A_{k}}\right]}
{\min_{\sigma\in A_{k}}\mathbb{P}_{\sigma}\left[\tau_{A_{m}}<\tau_{A_{k}}\right]}
\leq\frac{\mathbb{P}_{k}^{X^{+}}\left[\tau_{m}<\tau_{k}\right]}{\mathbb{P}_{k}^{X^{-}}
\left[\tau_{m}<\tau_{k}\right]}.
\label{eq: compare jump process}
\end{equation}
Indeed, from (\ref{eq: bound on rate up}) and (\ref{eq: bound on rate down}) it follows that 
we can couple the three Markov chains $\{X_{t}^{+}\}_{t\geq0}$, $\{X_{t}^{-}\}_{t\geq0}$ and 
$\{\xi_{t}\}_{t\geq 0}$ in such a way that, for any $0\leq k\leq n$ and any $\sigma_{0}\in A_{k}$, 
if $X_{0}^{-}=X_{0}^{+}=|\sigma_{0}|=k$, then 
\begin{equation}
X_{t}^{-}\leq\left|\sigma_{t}\right|\leq X_{t}^{+}, \qquad t \geq 0.
\end{equation}
This immediately guarantees that, for any $0 \leq k \leq m \leq n$,
\begin{equation}
\mathbb{P}_{k}^{X^{-}}\left[\tau_{m}<\tau_{k}\right] 
\leq\min_{\sigma\in A_{k}}\mathbb{P}_{\sigma}\left[\tau_{A_{m}}<\tau_{A_{k}}\right]
\leq\max_{\sigma\in A_{k}}\mathbb{P}_{\sigma}\left[\tau_{A_{m}}<\tau_{A_{k}}\right]
\leq\mathbb{P}_{k}^{X^{+}}\left[\tau_{m}<\tau_{k}\right],
\end{equation}
which proves the claim in (\ref{eq: compare jump process}). To get \eqref{eq: bound on rate up} 
and \eqref{eq: bound on rate down}, we pick  $r^{-}\left(i,j\right)$ and $r^{+}\left(i,j\right)$ such that
\begin{equation}
r^{-}\left(i,j\right)
=\begin{cases}
\left(n-i-\left(2+6\beta\right)n^{2/3}\right) e^{-2\beta[\vartheta_{i}]_{+}}
+\left(2+6\beta\right)n^{2/3}\mathbbm{1}_{\left\{ i\geq n\left(1-\frac{1}{3}\left(p+h\right)\right)\right\}}, 
&j=i+1,\\
\min\left\{ i,\,\left(i+\left(-2+7\beta\right)n^{2/3}\right) e^{-2\beta[-\vartheta_{i}]_{+}}
+2n^{2/3}\right\},  
&j=i-1,\\
0, &\mbox{otherwise},
\end{cases}
\end{equation}
and 
\begin{equation}
r^{+}\left(i,j\right)
=\begin{cases}
\min\left\{ n-i,\,\left(n-i+\left(-2+7\beta\right)n^{2/3}\right) e^{-2\beta[\vartheta_{i}]_{+}}
+2n^{2/3}\right\},  
&j=i+1,\\
\left(i-\left(2+6\beta\right)n^{2/3}\right) e^{-2\beta[-\vartheta_{i}]_{+}}, 
&j=i-1,\\
0, &\mbox{otherwise},
\end{cases}
\end{equation}
and note that, by Lemma~\ref{lem: Rate bounds} and \eqref{eq:tkest}, 
\eqref{eq: bound on rate up}--\eqref{eq: bound on rate down} are indeed satisfied. 

\medskip\noindent
{\bf 2.}
We continue from \eqref{eq: compare jump process}. Our task is to estimate the right-hand side
of \eqref{eq: compare jump process}. Let $\mathcal{G}$ be the set of all unit-step paths from $k$ 
to $m$ that only hit $m$ after their final step:
\begin{equation}
\mathcal{G}=\bigcup_{M\in\mathbb{N}}\left\{ \left\{ \gamma_{i}\right\} _{i=0}^{M-1}\colon\,
\gamma_{0}=k,\,\gamma_{M}=m,\,\gamma_{i}\in\left\{ 0,\ldots,m-1\right\} \text{ and } 
\left|\gamma_{i+1}-\gamma_{i}\right|=1\mbox{ for } 0 \leq i<M\right\}.
\end{equation}
We will show that 
\begin{equation}
\frac{\mathbb{P}_{k}^{X^{+}}\left[X_{t}^{+}\mbox{ follows trajectory }\gamma\right]}
{\mathbb{P}_{k}^{X^{-}}\left[X_{t}^{-}\mbox{ follows trajectory }\gamma\right]}
\leq\exp\left(\left[24\beta+4e^{2\beta(p+h+1)}\right]
\left(m-k\right)n^{-1/3}\right)\quad \forall\,\,\gamma\in\mathcal{G},
\end{equation}
which will settle the claim. (Note that the paths realising $\{\tau_{m}<\tau_{k}\}$ form a 
subset of $\mathcal{G}$.) To that end, let $\gamma^{\star}\in\mathcal{G}$ be the path 
$\gamma^{\star}=\left\{k,k+1,\ldots,m\right\}$. We claim that
\begin{equation}
\sup_{\gamma\in\mathcal{G}}\frac{\mathbb{P}_{k}^{X^{+}}\left[X_{t}^{+}
\mbox{ follows trajectory }\gamma\right]}
{\mathbb{P}_{k}^{X^{-}}\left[X_{t}^{-}\mbox{ follows trajectory }\gamma\right]}
\leq\frac{\mathbb{P}_{k}^{X^{+}}\left[X_{t}^{+}\mbox{ follows trajectory }\gamma^{\star}\right]}
{\mathbb{P}_{k}^{X^{-}}\left[X_{t}^{-}\mbox{ follows trajectory }\gamma^{\star}\right]}.
\label{eq:directpath}
\end{equation}
Indeed, if $\gamma=\left(\gamma_{1},\ldots,\gamma_{M}\right)\in\mathcal{G}$, then by the Markov 
property we have that 
\begin{equation}
\mathbb{P}_{k}^{X^{+}}\left[X_{t}^{+}\mbox{ follows trajectory }\gamma\right] 
= \prod_{i=0}^{M-1}\mathbb{P}_{\gamma_{i}}^{X^{+}}\left[\tau_{\gamma_{i+1}}<\tau_{\gamma_{i}}\right],
\end{equation}
with a similar expression for $\mathbb{P}_{k}^{X^{-}}[X_{t}^{-}\mbox{ follows trajectory }\gamma]$.
Therefore, noting that $\gamma_{i}-1=2\gamma_{i}-\gamma_{i+1}$ when $\gamma_{i+1}=\gamma_{i}+1$ 
and $\gamma_{i}+1=2\gamma_{i}-\gamma_{i+1}$ when $\gamma_{i+1}=\gamma_{i}-1$, we have 
\begin{eqnarray}
&&\frac{\mathbb{P}_{k}^{X^{+}}\left[X_{t}^{+}\mbox{ follows trajectory }\gamma\right]}
{\mathbb{P}_{k}^{X^{-}}\left[X_{t}^{-}\mbox{ follows trajectory }\gamma\right]} 
= \prod_{i=0}^{M-1}\frac{\mathbb{P}_{\gamma_{i}}^{X^{+}}\left[\tau_{\gamma_{i+1}}<\tau_{\gamma_{i}}\right]}
{\mathbb{P}_{\gamma_{i}}^{X^{-}}\left[\tau_{\gamma_{i+1}}<\tau_{\gamma_{i}}\right]}\\ \nonumber
&& \qquad =  \prod_{i=1}^{M}\left(\frac{r^{+}\left(\gamma_{i},\gamma_{i+1}\right)}{r^{+}
\left(\gamma_{i},\gamma_{i+1}\right)+r^{+}\left(\gamma_{i},2\gamma_{i}-\gamma_{i+1}\right)}\right)
\left(\frac{r^{-}\left(\gamma_{i},\gamma_{i+1}\right)}{r^{-}
\left(\gamma_{i},\gamma_{i+1}\right)+r^{-}\left(\gamma_{i},2\gamma_{i}-\gamma_{i+1}\right)}\right)^{-1}.
\end{eqnarray}
Since, whenever $\gamma_{i+1}=\gamma_{i}-1$, 
\begin{eqnarray}
\frac{r^{-}\left(\gamma_{i},\gamma_{i+1}\right)}{r^{-}\left(\gamma_{i},\gamma_{i+1}\right)
+r^{-}\left(\gamma_{i},2\gamma_{i}-\gamma_{i+1}\right)} 
& = & \frac{r^{-}\left(\gamma_{i},\gamma_{i}-1\right)}{r^{-}\left(\gamma_{i},\gamma_{i}-1\right)
+r^{-}\left(\gamma_{i},\gamma_{i}+1\right)}\\ \nonumber
& \geq & \frac{r^{+}\left(\gamma_{i},\gamma_{i}-1\right)}{r^{+}\left(\gamma_{i},\gamma_{i}-1\right)
+r^{+}\left(\gamma_{i},\gamma_{i}+1\right)}\\ \nonumber 
& = & \frac{r^{+}\left(\gamma_{i},\gamma_{i+1}\right)}{r^{+}\left(\gamma_{i},\gamma_{i+1}\right)
+r^{+}\left(\gamma_{i},2\gamma_{i}-\gamma_{i+1}\right)},
\end{eqnarray}
we get
\begin{eqnarray}
&&\prod_{i=0}^{M-1}\left(\frac{r^{+}\left(\gamma_{i},\gamma_{i+1}\right)}
{r^{+}\left(\gamma_{i},\gamma_{i+1}\right)+r^{+}\left(\gamma_{i},2\gamma_{i}-\gamma_{i+1}\right)}\right)
\left(\frac{r^{-}\left(\gamma_{i},\gamma_{i+1}\right)}{r^{-}\left(\gamma_{i},\gamma_{i+1}\right)
+r^{-}\left(\gamma_{i},2\gamma_{i}-\gamma_{i+1}\right)}\right)^{-1}\\ \nonumber
&&\qquad \leq \prod_{i=k}^{m-1}\left(\frac{r^{+}\left(i,i+1\right)}{r^{+}\left(i,i+1\right)+r^{+}\left(i,i-1\right)}\right)
\left(\frac{r^{-}\left(i,i+1\right)}{r^{-}\left(i,i+1\right)+r^{-}\left(i,i-1\right)}\right)^{-1}\\ \nonumber
&&\qquad = \frac{\mathbb{P}_{k}^{X^{+}}\left[X_{t}^{+}\mbox{ follows trajectory }\gamma^{\star}\right]}
{\mathbb{P}_{k}^{X^{-}}\left[X_{t}^{-}\mbox{ follows trajectory }\gamma^{\star}\right]}.
\end{eqnarray}
This proves the claim in (\ref{eq:directpath}). 

\medskip\noindent
{\bf 3.}
Next, consider the ratio 
\begin{equation}
\label{eq:rate ratio1}
\frac{r^{-}\left(i,i+1\right)+r^{-}\left(i,i-1\right)}{r^{+}\left(i,i+1\right)
+r^{+}\left(i,i-1\right)} = \frac{A}{B}
\end{equation}
with
\begin{equation}
\begin{aligned}
A &=
\left(n-i-\left(2+6\beta\right)n^{2/3}\right) e^{-2\beta[\vartheta_{i}]_{+}}
+\left(2+6\beta\right)n^{2/3}\mathbbm{1}_{\left\{ i\geq n\left(1-\frac{1}{3}\left(p+h\right)\right)\right\}}\\
&\qquad +\left(i+\left(-2+7\beta\right)n^{2/3}\right) e^{-2\beta[-\vartheta_{i}]_{+}}
+2n^{2/3},\\
B &= 
\left(n-i+\left(-2+7\beta\right)n^{2/3}\right) e^{-2\beta[\vartheta_{i}]_{+}}
+2n^{2/3}\\
&\qquad +\left(i-\left(2+6\beta\right)n^{2/3}\right) e^{-2\beta[-\vartheta_{i}]_{+}},
\end{aligned}
\end{equation}
and the ratio 
\begin{equation}
\label{eq:rate ratio2}
\frac{r^{+}\left(i,i+1\right)}{r^{-}\left(i,i+1\right)} = \frac{C}{D}
\end{equation} 
with
\begin{equation} 
\begin{aligned}
C &= 
\left(n-i+\left(-2+7\beta\right)n^{2/3}\right) e^{-2\beta[\vartheta_{i}]_{+}}
+2n^{2/3},\\
D &= \left(n-i-\left(2+6\beta\right)n^{2/3}\right) e^{-2\beta[\vartheta_{i}]_{+}}
+\left(2+6\beta\right)n^{2/3}\mathbbm{1}_{\left\{ i\geq n\left(1-\frac{1}{3}\left(p+h\right)\right)\right\}}.
\end{aligned}
\end{equation}
Note from (\ref{eq:rate ratio1}) that for $\vartheta_{i}\geq0$ (i.e., $i\leq\frac{n}{2}(1-p^{-1}h)$, in 
which case also $i<n(1-\frac{1}{3}\left(p+h\right)))$,
\begin{equation}
\frac{r^{-}\left(i,i+1\right)+r^{-}\left(i,i-1\right)}{r^{+}\left(i,i+1\right)+r^{+}\left(i,i-1\right)}
\leq1+\frac{13\beta e^{2\beta(p-h)}}{n^{1/3}},
\label{eq: r ratios bound 1}
\end{equation}
and from (\ref{eq:rate ratio2}) it follows that in this case 
\begin{eqnarray}
\frac{r^{+}\left(i,i+1\right)}{r^{-}\left(i,i+1\right)} 
& \leq & 1+\frac{3\left(3+13\beta\right) e^{2\beta(p-h)}}{n^{1/3}\left(p+h\right)}.
\label{eq: r ratios bound 2}
\end{eqnarray}
Similarly, for $\vartheta_{i}<0$ we have that 
\begin{eqnarray}
\frac{r^{-}\left(i,i+1\right)+r^{-}\left(i,i-1\right)}{r^{+}\left(i,i+1\right)+r^{+}\left(i,i-1\right)} 
& \leq & 1+\frac{2 e^{2\beta(p+h)}}{n^{1/3}}
\label{eq: r ratios bound 3}
\end{eqnarray}
and
\begin{eqnarray}
\frac{r^{+}\left(i,i+1\right)}{r^{-}\left(i,i+1\right)} 
& \leq & 1+\frac{6\left(2+6\beta\right)}{n^{1/3}\left(p+h\right)}.
\label{eq: r ratios bound 4}
\end{eqnarray}
Combining \eqref{eq: r ratios bound 1}--\eqref{eq: r ratios bound 4}, we get that, for all $1\leq i\leq n-1$, 
\begin{equation}
\frac{r^{-}\left(i,i+1\right)+r^{-}\left(i,i-1\right)}{r^{+}\left(i,i+1\right)
+r^{+}\left(i,i-1\right)}\times\frac{r^{+}\left(i,i+1\right)}{r^{-}\left(i,i+1\right)}\leq1+Kn^{-1/3},
\end{equation}
where 
\begin{equation}
\label{eq:Ciden}
K=\max\left\{ e^{2\beta(p-h)}\left(\tfrac{9+39\beta}{p+h}+13\beta\right),\,
2 e^{2\beta(p+h)}+\tfrac{12+36\beta}{p+h}\right\}.
\end{equation}
Therefore 
\begin{equation}
\frac{\mathbb{P}_{k}^{X^{+}}\left[X_{t}^{+}\mbox{ follows trajectory }\gamma^{\star}\right]}
{\mathbb{P}_{k}^{X^{-}}\left[X_{t}^{-}\mbox{ follows trajectory }\gamma^{\star}\right]} 
\leq \prod_{i=k}^{m-1}\left(1+\tfrac{K}{n^{1/3}}\right)
\leq e^{Kn^{-1/3}(m-k)}.
\end{equation}
\end{proof}

An application of the path-comparison methods used in Step 2 of the proof of Lemma \ref{lem:ratio probab} yields the following.

\begin{corollary}
\label{cor:expratio} 
With $\mathbb{P}_{\ER_n(p)}$-probability tending to $1$ as $n\to\infty$ the following is true. For every 
$0\leq k < m\leq n$,
\begin{equation}
\frac{\max_{\sigma\in A_{k}}\mathbb{E}_{\sigma}\left[\tau_{A_{m}}<\tau_{A_{k}}\right]}
{\min_{\sigma\in A_{k}}\mathbb{E}_{\sigma}\left[\tau_{A_{m}}<\tau_{A_{k}}\right]}
\leq\left[1+o_{n}(1)\right] e^{K(m-k)n^{-1/3}}
\label{eq:hitting times ratio alt}
\end{equation}
with $K=K(\beta,h,p) \in (0,\infty)$.
\end{corollary}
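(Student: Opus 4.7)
The plan mirrors the structure of the proof of Lemma \ref{lem:ratio probab}, upgrading it from a bound on hitting probabilities to a bound on expected hitting times. I would begin with the same sandwich coupling $X^{-}_{t}\leq|\xi_{t}|\leq X^{+}_{t}$ based on the birth-death chains $\{X^{\pm}_{t}\}_{t\geq 0}$ with rates $r^{\pm}$ constructed in Step~1 of that proof. With all three processes started at level $k$, the coupling yields $\tau_{m}^{X^{+}}\leq\tau_{A_{m}}^{\xi}\leq\tau_{m}^{X^{-}}$ pathwise, whence
\[
\frac{\max_{\sigma\in A_{k}}\mathbb{E}_{\sigma}[\tau_{A_{m}}]}{\min_{\sigma\in A_{k}}\mathbb{E}_{\sigma}[\tau_{A_{m}}]}\;\leq\;\frac{\mathbb{E}^{X^{-}}_{k}[\tau_{m}]}{\mathbb{E}^{X^{+}}_{k}[\tau_{m}]}.
\]
It then suffices to compare the expected hitting times of the two birth-death chains. (The same argument would adapt to time-weighted variants such as $\mathbb{E}_{\sigma}[\tau_{A_{m}}\mathbbm{1}_{\tau_{A_{m}}<\tau_{A_{k}}}]$, which fit more directly into the path-probability decomposition of Step~2.)

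For the comparison, I would use the sum-over-paths representation
\[
\mathbb{E}^{X^{\pm}}_{k}[\tau_{m}]=\sum_{\gamma\in\mathcal{G}}\mathbb{P}^{X^{\pm}}[\gamma]\,T^{\pm}(\gamma),\qquad T^{\pm}(\gamma)=\sum_{i=0}^{|\gamma|-1}\frac{1}{r^{\pm}_{\mathrm{exit}}(\gamma_{i})},
\]
with $\mathcal{G}$ as in the proof of Lemma \ref{lem:ratio probab} and $r^{\pm}_{\mathrm{exit}}(l)=r^{\pm}(l,l+1)+r^{\pm}(l,l-1)$. The estimates \eqref{eq: r ratios bound 1}--\eqref{eq: r ratios bound 4} from Step~3 of that proof, together with the observation that both exit rates $r^{\pm}_{\mathrm{exit}}(l)$ are of order $n$ with difference $O(n^{2/3})$, give two-sided control $r^{\pm}_{\mathrm{exit}}(l)/r^{\mp}_{\mathrm{exit}}(l)\leq 1+O(n^{-1/3})$ and hence $T^{-}(\gamma)/T^{+}(\gamma)\leq 1+O(n^{-1/3})$ uniformly in $\gamma$. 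The telescoping identity of Step~2 then controls the ratio $\mathbb{P}^{X^{-}}[\gamma]/\mathbb{P}^{X^{+}}[\gamma]$, and summing over $\gamma$ should deliver the claim.

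The main obstacle is precisely that last ratio. In Step~2 of Lemma \ref{lem:ratio probab} the \emph{reverse} ratio $\mathbb{P}^{X^{+}}[\gamma]/\mathbb{P}^{X^{-}}[\gamma]$ was maximised at the monotone path $\gamma^{\star}=(k,k+1,\dots,m)$, since every down-step of $\gamma$ multiplied that ratio by a factor $\leq 1$. Here the roles of up- and down-steps are swapped: down-steps multiply $\mathbb{P}^{X^{-}}/\mathbb{P}^{X^{+}}$ by a factor $\geq 1$, and a path with many round-trips could, term by term, inflate the ratio by a power of its length rather than a power of $m-k$. The resolution is that each extra down-step in $\gamma$ must be accompanied by a compensating up-step in order to preserve the net change $m-k$, and the combined contribution of such a pair telescopes: gathering the four rate-ratio factors and invoking \eqref{eq: r ratios bound 1}--\eqref{eq: r ratios bound 4} (and their symmetric reverse counterparts) shows that the round-trip contribution collapses to $1+O(n^{-1/3})$ rather than to $(1+O(n^{-1/3}))^{2}$. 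Only the $m-k$ unpaired net up-steps then contribute to the exponent, which produces the factor $e^{K(m-k)n^{-1/3}}$. Carrying out this telescoping carefully, and absorbing the extra $1+O(n^{-1/3})$ from $T^{-}/T^{+}$ into the $1+o_{n}(1)$ prefactor, completes the proof.
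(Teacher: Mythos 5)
Your plan correctly mirrors the structure the paper alludes to (sandwich coupling onto the birth--death chains $X^{\pm}$, then a path decomposition of $\mathbb{E}^{X^{\pm}}_{k}[\tau_{m}]=\sum_{\gamma}\mathbb{P}^{X^{\pm}}[\gamma]T^{\pm}(\gamma)$, with $T^{-}(\gamma)/T^{+}(\gamma)\leq 1+O(n^{-1/3})$ uniformly), and you correctly identify the genuine obstacle: unlike in Step~2 of Lemma~\ref{lem:ratio probab}, the \emph{reverse} ratio $\mathbb{P}^{X^{-}}[\gamma]/\mathbb{P}^{X^{+}}[\gamma]$ is \emph{not} maximised at the monotone path $\gamma^{\star}$, because now the down-step factors are the ones that exceed $1$.

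The proposed resolution, however, does not close the gap. You argue that a round-trip pair (one compensating up-step plus one down-step) ``telescopes'' so that its contribution to $\mathbb{P}^{X^{-}}[\gamma]/\mathbb{P}^{X^{+}}[\gamma]$ is $1+O(n^{-1/3})$ rather than $(1+O(n^{-1/3}))^{2}$, and that therefore only the $m-k$ net up-steps matter. This does not work for two reasons. First, $(1+O(n^{-1/3}))^{2}=1+O(n^{-1/3})$, so the claimed collapse is not an actual improvement; for the argument to run you would need the round-trip factor to be $1+O(n^{-2/3})$ or better, and a direct computation from the rates $r^{\pm}$ shows no such cancellation --- the up-step factor $p^{-}(i,i+1)/p^{+}(i,i+1)\leq 1$ and the down-step factor $p^{-}(i+1,i)/p^{+}(i+1,i)\geq 1$ sit at \emph{different} levels and their $O(n^{-1/3})$ deviations do not offset exactly. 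Second, and more fundamentally, even if each round trip did contribute only $1+O(n^{-1/3})$, the number of round trips along $\gamma\in\mathcal{G}$ is unbounded, so $\sup_{\gamma}\mathbb{P}^{X^{-}}[\gamma]/\mathbb{P}^{X^{+}}[\gamma]=\infty$; a purely pathwise bound on the ratio cannot deliver the result. What saves the day is that long paths carry exponentially small $\mathbb{P}^{X^{+}}$-mass, so the extra $(1+O(n^{-1/3}))^{\#\text{round trips}}$ is dominated in the \emph{sum} over $\gamma$, not term by term. To make this rigorous you should either decompose by the number of down-steps from each level and show the resulting geometric series still sum with a controlled prefactor, or, more cleanly, bypass paths entirely and compare the explicit birth--death formula $\mathbb{E}_{k}[\tau_{m}]=\sum_{l=k}^{m-1}T_{l}$ with $T_{l}=\tfrac{1}{b_{l}}(1+d_{l}T_{l-1})$, propagating the estimate $T^{-}_{l}/T^{+}_{l}\leq(1+O(n^{-1/3}))T^{-}_{l-1}/T^{+}_{l-1}$ inductively --- this is closer in spirit to the explicit computations the paper carries out in Appendix~\ref{app} for the related conditional return time.
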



\section{Capacity bounds}
\label{s:capest}

The goal of this section is to derive various capacity bounds that will be needed to prove 
Theorem~\ref{thm:metER} in Sections~\ref{sec:couplingscheme}--\ref{s:proofmettheorem}.
In Section~\ref{ss:capestKn} we derive capacity bounds for the processes $\{\xi_{t}^{l}\}_{t\geq0}$ 
and $\{\xi_{t}^{u}\}_{t\geq0}$ on $K_n$ introduced in (\ref{eq: theta u l}) (Lemma~\ref{lem:cap bounds u l} 
below). In Section~\ref{ss:capestERnp} we do the same for the process $\{\xi_{t}\} _{t\geq0}$ 
on $\ER_n(p)$ (Lemma~\ref{lem: cap G bound} below). In Section~\ref{ss:rankorder} we use the
bounds to rank-order the mean return times to $A_{\mathbf{M^{l}}}$, $A_{\mathbf{M}}$ and 
$A_{\mathbf{M^{u}}}$, respectively (Lemma \ref{lem: Hitting order} below). This ordering will 
be needed in the construction of a coupling in Section~\ref{sec:couplingscheme}.

Define the Dirichlet form for $\{\xi_t\}_{t \geq 0}$ by
\begin{equation}
\mathscr{E}\left(f,f\right)=\tfrac{1}{2}\sum_{\sigma,\sigma'\in S_{n}}\mu\left(\sigma\right)
r\left(\sigma,\sigma'\right)\left[f\left(\sigma\right)-f\left(\sigma'\right)\right]^{2}, 
\qquad f\colon\,S_{n}\to\left[0,1\right],
\label{eq: Dirichlet form}
\end{equation}
and for $\{\theta_{t}^{u}\}_{t\geq0}$ and $\{\theta_{t}^{l}\}_{t\geq0}$ by
\begin{eqnarray}
\mathscr{E}^{u}\left(f,f\right) 
& = & \tfrac{1}{2}\sum_{a,a'\in \Gamma_n}\nu^{u}\left(a\right)q^{u}
\left(a,a'\right)\left[f\left(a\right)-f\left(a'\right)\right]^{2},\\ \nonumber
\mathscr{E}^{l}\left(f,f\right) & = & \tfrac{1}{2}
\sum_{a,a'\in \Gamma_n}\nu^{l}\left(a\right)q^{l}\left(a,a'\right)
\left[f\left(a\right)-f\left(a'\right)\right]^{2},
\qquad f\colon\, \Gamma_n\to\left[0,1\right]. 
\end{eqnarray}
For $A,B\subseteq S_{n}$, define the capacity between $A$ and $B$ for $\{\xi_t\}_{t \geq 0}$ by 
\begin{equation}
\mathrm{cap}\left(A,B\right)=\min_{f\in Q\left(A,B\right)}\mathscr{E}\left(f,f\right),
\label{eq:capAB}
\end{equation}
where
\begin{equation}
Q\left(A,B\right)=\left\{f\colon\, S_{n}\to\left[0,1\right],\, f_{|A}\equiv1,\, f_{|B}\equiv0\right\},
\label{eq:QAB}
\end{equation}
and similarly for $\mathrm{cap}^{u}\left(A,B\right)$ and $\mathrm{cap}^{l}\left(A,B\right)$.


\subsection{Capacity bounds on $K_n$}
\label{ss:capestKn}

First we derive capacity bounds for $\{\xi^l_{t}\} _{t\geq0}$ and $\{\xi^u_{t}\} _{t\geq0}$ on $K_n$. 
A useful reformulation of (\ref{eq:capAB}) is given by 
\begin{equation}
\mathrm{cap}\left(A,B\right)=\sum_{\sigma\in A}\sum_{\sigma'\in S_{n}}
\mu\left(\sigma\right)r\left(\sigma,\sigma'\right)\mathbb{P}_{\sigma}\left(\tau_{B}<\tau_{A}\right).
\label{eq:cap identity}
\end{equation}

\begin{lemma}[{\bf Capacity bounds for $\{\xi^u_{t}\} _{t\geq0}$ and $\{\xi^l_{t}\} _{t\geq0}$}]
\label{lem:cap bounds u l} 
For $a,b \in \left[\mathbf{m}^{u},\mathbf{s}^{u}\right]$ with $a < b $, 
\begin{equation}
\frac{\left(1-b + \frac{2}{n} \right)}{2n\left(b-a\right)^{2}}\leq\frac{\mathrm{cap}^{u}\left(a,b\right)}
{C^{\star}\left(b\right)}\leq\frac{n\left(1-b\right)}{2}
\label{eq: cap theta lemma}
\end{equation}
with 
\begin{equation}
C^{\star}\left(b\right)=\frac{1}{z^{u}} e^{-\beta n\Psi^{u}(b)}
{n \choose \frac{n}{2}\left(1+\min \left( b, \mathbf{t}^{u}\right)\right)}.
\end{equation}
For $a,b \in\left[\mathbf{m}^{l},\mathbf{s}^{l}\right]$ with $a<b$, analogous bounds hold for 
$\mathrm{cap}^{l}\left(a,b\right)$.
\end{lemma}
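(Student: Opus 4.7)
The plan is to exploit the fact that the magnetization processes $\{\theta^u_t\}_{t\geq 0}$ and $\{\theta^l_t\}_{t\geq 0}$ are nearest-neighbour birth-and-death chains on $\Gamma_n$. For such chains the capacity between two singletons has the closed-form series-of-resistors expression
\begin{equation*}
\mathrm{cap}^u(a,b) = \left[\sum_{x \in [a,b) \cap \Gamma_n}\frac{1}{\nu^u(x)\,q^u(x,x+\tfrac{2}{n})}\right]^{-1},
\end{equation*}
which follows from \eqref{eq:capAB}--\eqref{eq:QAB} by taking the unique harmonic interpolant between $a$ and $b$ as the minimiser in the Dirichlet variational problem. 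With this representation in hand, both inequalities in the lemma reduce to elementary bookkeeping.

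For the upper bound I would retain only a single term of the sum to obtain
\begin{equation*}
\mathrm{cap}^u(a,b) \;\leq\; \nu^u(x^*)\,q^u(x^*,x^*+\tfrac{2}{n}), \qquad x^* \in [a,b)\cap\Gamma_n,
\end{equation*}
and then choose $x^*$ at the location where $\nu^u(\cdot) q^u(\cdot, \cdot+\tfrac{2}{n})$ is smallest. The sign analysis of $J_{p,\beta,h^u}$ carried out in Section~\ref{ss:metaperturbedCW} shows that $\nu^u$ has local maxima at $\mathbf{m}^u$ and $\mathbf{s}^u$ and a local minimum at $\mathbf{t}^u$, so the minimum along $[a,b)$ is attained near $b-\tfrac{2}{n}$ when $b\leq\mathbf{t}^u$ and near $\mathbf{t}^u$ when $b>\mathbf{t}^u$; this dichotomy is exactly what the piecewise definition of $C^*(b)$ records. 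One then applies the reversibility identity $\nu^u(x) q^u(x,x+\tfrac{2}{n}) = \nu^u(x+\tfrac{2}{n}) q^u(x+\tfrac{2}{n},x)$ together with the explicit form \eqref{eq:mu q Psi relation} and the binomial identity $\binom{n}{k-1}/\binom{n}{k}=k/(n-k+1)$ to recast the single-term bound into the shape $C^*(b)\cdot\tfrac{n(1-b)}{2}$. The crucial structural input is that in the metastable regime one has $\mathbf{t}^u<-h^u/p$, so the upward jumps involved incur no exponential energy penalty and the prefactor comes out as $\tfrac{n(1-b)}{2}$.

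For the lower bound I would use the complementary estimate
\begin{equation*}
\mathrm{cap}^u(a,b) \;\geq\; \frac{\min_{x \in [a,b)\cap\Gamma_n}\nu^u(x)\,q^u(x,x+\tfrac{2}{n})}{|[a,b)\cap\Gamma_n|},
\end{equation*}
which is a consequence of $\sum_i r_i^{-1}\leq N\max_i r_i^{-1}$ applied to $N=|[a,b)\cap\Gamma_n|=\tfrac{n}{2}(b-a)$ terms. The minimiser coincides with the bottleneck identified for the upper bound, and inserting it together with $N=\tfrac{n}{2}(b-a)$ yields, after the same elementary manipulations, the stated factor $\tfrac{1-b+\tfrac{2}{n}}{2n(b-a)^2}$ in front of $C^*(b)$.

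The main obstacle I anticipate is the bookkeeping required to extract precisely $\tfrac{n(1-b)}{2}$ (rather than, say, $\tfrac{n(1+b)}{2}$) as the prefactor: one needs to invoke reversibility in the right direction, use the location of $\mathbf{t}^u$ below $-h^u/p$ to discard the energy exponential, and carefully track the binomial ratio across the piecewise definition of $C^*(b)$ at the threshold $b=\mathbf{t}^u$. The corresponding bounds for $\mathrm{cap}^l(a,b)$ are obtained line by line by replacing every superscript $u$ by $l$, since all the structural ingredients---monotonicity of $\nu^l$, the inequality $\mathbf{t}^l<-h^l/p$, and the explicit form of $q^l$ and $\nu^l$ parallel to \eqref{eq:mu q Psi relation}---carry over verbatim.
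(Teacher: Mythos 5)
Your proposal is correct in its essentials but follows a genuinely different route from the paper. You invoke the exact series-of-resistors identity $\mathrm{cap}^u(a,b)=\bigl[\sum_{x\in[a,b)}(\nu^u(x)q^u(x,x+\tfrac2n))^{-1}\bigr]^{-1}$, which indeed follows by plugging the one-dimensional harmonic minimiser into the Dirichlet form, and then extracts the bounds by keeping one term (upper) or replacing the sum by $N\max$ (lower), with $N=\tfrac{n}{2}(b-a)$. The paper instead works directly from the variational characterization and applies a pigeonhole argument: some increment of any test function $f\in Q(a,b)$ must exceed $(\tfrac{n}{2}(b-a))^{-1}$, and the corresponding single term of the Dirichlet sum is bounded below. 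This costs the paper an extra factor $\tfrac{n}{2}(b-a)$ in the lower bound, which is why the lemma carries a $(b-a)^{-2}$ rather than the $(b-a)^{-1}$ your exact identity would naturally deliver. Both approaches then reduce to the same bottleneck identification: $e^{-\beta n\Psi^u(\cdot)}\binom{n}{\frac{n}{2}(1+\cdot)}$ is unimodal with minimum at $\mathbf{t}^u$, so $\min_x \nu^u(x)q^u(x,x+\tfrac{2}{n})$ on $[a,b)$ sits at $\min(b,\mathbf{t}^u)$ (up to one lattice spacing). Your identity gives a cleaner derivation and, strictly, a sharper lower bound than stated.

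The one place to be careful is the prefactor extraction you flag as an obstacle. After reversibility the single-term bound becomes $\nu^u(\min(b,\mathbf{t}^u))\,q^u(\min(b,\mathbf{t}^u),\min(b,\mathbf{t}^u)-\tfrac2n)$, and since $\min(b,\mathbf{t}^u)\leq\mathbf{t}^u<-h^u/p$ the downward jump is indeed rate-$1$ (no energy penalty), giving the combinatorial prefactor $\tfrac{n}{2}(1+\min(b,\mathbf{t}^u))$. To match the stated $\tfrac{n}{2}(1-b)$ you need $1+\min(b,\mathbf{t}^u)\leq 1-b$, which holds for $b\leq\mathbf{t}^u$ (as $b<0$) but for $b>\mathbf{t}^u$ requires $b\leq-\mathbf{t}^u$, and this can fail near $b=\mathbf{s}^u$. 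The paper sidesteps this by keeping the upward-rate form $\tfrac{n}{2}(1-x^*)$ and absorbing the mismatch in an explicit $e^{2\beta(p+h^u)}$ slack factor; you will need a similar constant-factor cushion, so your bookkeeping cannot quite hit $\tfrac{n(1-b)}{2}$ on the nose for all $b\in[\mathbf{m}^u,\mathbf{s}^u]$ — this is a gap, though a benign one, since the lemma is only needed up to $e^{O(1)}$ factors.
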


\begin{proof}
We will prove the upper and lower bounds only for $\mathrm{cap}^{u}\left(a,b\right)$, the proof 
for $\mathrm{cap}^{l}\left(a,b\right)$ being identical. Note from the definition in (\ref{eq:capAB}) that 
\begin{eqnarray}
&&\mathrm{cap}^{u}\left(a,b\right)\\ \nonumber 
&& = \min_{f\in Q(a,b)}\sum_{i=0}^{\frac{n}{2}\left(b-a\right)-1}\nu^{u}
\left(a+\tfrac{2i}{n}\right)q^{u}\left(a+\tfrac{2i}{n},a+\tfrac{2\left(i+1\right)}{n}\right)
\left[f\left(a+\tfrac{2i}{n}\right)-f\left(a+\tfrac{2\left(i+1\right)}{n}\right)\right]^{2},
\end{eqnarray}
where it is easy to see that the set $Q\left(a,b\right)$ in (\ref{eq:QAB}) may be reduced to 
\begin{equation}
\label{eq:capab}
Q\left(a,b\right)=\Big\{ f\colon \Gamma_n\to\left[0,1\right],\, f\left(x\right)=1
\mbox{ for } x\leq a,\, f\left(x\right)=0\mbox{ for }x\geq b\Big\}.
\end{equation}
Note that for every $f\in Q(a,b)$ there is some $0\leq i\leq\frac{n}{2}\left(b-a\right)-1$ such that
\begin{equation}
\left|f\left(a+\tfrac{2i}{n}\right)-f\left(a+\tfrac{2\left(i+1\right)}{n}\right)\right|
\geq\left(\tfrac{n}{2}\left(b-a\right)\right)^{-1}.
\label{eq:jump1}
\end{equation}
Also note that, by (\ref{eq:mu q Psi relation}), 
\begin{eqnarray}
&&\nu^{u}\left(a+\tfrac{2i}{n}\right)q^{u}\left(a+\tfrac{2i}{n},a+\tfrac{2\left(i+1\right)}{n}\right)\\ \nonumber
&& = \frac{1}{z^{u}}\frac{n}{2}\left(1-a-\tfrac{2i}{n}\right)
e^{-\beta n\max\big\{ \Psi^{u}\big(a+\tfrac{2i}{n}\big),
\Psi^{u}\big(a+\tfrac{2(i+1)}{n}\big)\big\}}
{n \choose \tfrac{n}{2}\left(1+a\right)+i},
\end{eqnarray}
and that, for any $\delta\in\mathbb{R}$, 
\begin{equation}
\Psi^{u}\left(a+\delta\right)-\Psi^{u}\left(a\right)=-\delta\left(pa+h^{u}+\tfrac{p}{2}\delta\right),
\label{eq:Psi difference}
\end{equation}
so that 
\begin{equation}
\label{eq:capab*}
\max\left\{ \Psi^{u}\left(a+\tfrac{2i}{n}\right),
\Psi^{u}\left(a+\tfrac{2\left(i+1\right)}{n}\right)\right\} \leq \tfrac{2}{n}\left(pa+h+\tfrac{p}{n}\right) 
+ \Psi^{u}\left(a+\tfrac{2i}{n}\right).
\end{equation}
Combining, \eqref{eq:capab}--\eqref{eq:capab*} with $\delta=\frac{2}{n}$, we get 
\begin{eqnarray}
&&\mathrm{cap}^{u}\left(a,b\right)\\ \nonumber
&&\qquad \geq \min_{0\leq i\leq\frac{n}{2}\left(b-a\right)-1}\frac{2\left(1-b + \tfrac{2}{n}\right)
e^{-2\beta(p+h^{u})}}{nz^{u}\left(b-a\right)^{2}}
e^{-\beta n\Psi^{u}\big(a+\tfrac{2i}{n}\big)}{n \choose \frac{n}{2}\left(1+a\right)+i}\\ \nonumber
&&\qquad = \frac{2\left(1-b + \frac{2}{n} \right) e^{-2\beta(p+h^{u} 
+ \frac{p}{n})}}{nz^{u}\left(b-a\right)^{2}}
e^{-\beta n\Psi^{u}\left(\min(b,\mathbf{t}^{u})\right)}{n \choose \frac{n}{2}
\left(1+\min(b,\mathbf{t}^{u})\right)},
\end{eqnarray}
where we use (\ref{eq:Psi difference}) that, by the definition of $\mathbf{m}^{u}$, $\mathbf{t}^{u}$, 
$\mathbf{s}^{u}$, for $a,b\in\left[\mathbf{m}^{u},\mathbf{s}^{u}\right]$ with $a<b$, the function $i \mapsto 
e^{-\beta n\Psi^{u}\left(a+\tfrac{2i}{n}\right)}{n \choose \frac{n}{2}(1+a)+i}$ is decreasing on $[\mathbf{m}^{u},
\mathbf{t}^{u}]$ and increasing on $[\mathbf{t}^{u},\mathbf{s}^{u}]$. This settles the lower bound 
in \eqref{eq: cap theta lemma}. 

Arguments similar to the ones above give 
\begin{eqnarray}
\nonumber
\mathrm{cap}^{u}\left(a,b\right) 
& \leq & \nu^{u}\left(\min(b,\mathbf{t}^u)-\tfrac{2}{n}\right)q^{u}
\left(\min(b,\mathbf{t}^u)-\tfrac{2}{n},\min(b,\mathbf{t}^u)\right)\\
& \leq & \frac{n\left(1-b\right) e^{2\beta\left(p+h^{u}\right)}}{2z^{u}}
e^{-\beta n\Psi^{u}\left(\min(b,\mathbf{t}^u)\right)}
{n \choose \frac{n}{2}\left(1+\min(b,\mathbf{t}^u)\right)},
\end{eqnarray}
where for the first equality we use the test function $f\equiv1$ on $\left[-1,\min(b,\mathbf{t}^u)-\frac{2}{n}\right]$
and $f\equiv0$ on $\left[\min(b,\mathbf{t}^u),1\right]$ in \eqref{eq:capab}. 
\end{proof}


\subsection{Capacity bounds on $\ER_n(p)$}
\label{ss:capestERnp} 

Next we derive capacity bounds for $\{\xi_{t}\} _{t\geq0}$ on $\ER_n(p)$. The proof is analogous to 
what was done in Lemma~\ref{lem:cap bounds u l} for $\{\theta_{t}^{u}\} _{t\geq0}$ and 
$\{\theta_{t}^{l}\}_{t\geq0}$ on $K_n$. 

Define the set of direct paths between $A \subseteq S_{n}$ and $ B\subseteq S_{n}$ by 
\begin{equation}
\mathcal{L}_{A,B}=\left\{ \gamma=(\gamma_0,\ldots,\gamma_{|\gamma|}\colon A\to B:\,\left|\gamma_{i+1}\right|
=\left|\gamma_{i}\right|+1\mbox{ for all }\gamma_{i}\in\gamma\right\},
\label{eq:direct paths}
\end{equation}
which may be empty. Abbreviate $\theta_k = p(1-\frac{k}{n})-h$. 

\begin{lemma}[{\bf Capacity bounds for $\{\xi_{t}\} _{t\geq0}$}]
\label{lem: cap G bound}
With $\mathbb{P}_{\ER_n(p)}$-probability tending to $1$ as $n\to\infty$ the following is true. 
For every $0\leq k<k'\leq n$ and every $\varrho\colon\mathbb{N}\to\mathbb{R}_{+}$ satisfying 
$\lim_{n\to\infty} \varrho\left(n\right)=\infty$,
\begin{eqnarray}
\mathrm{cap}\left(A_{k},A_{k'}\right)
& \leq & \frac{1}{Z}\,e^{-\beta H_{n}\left(\boxminus\right)}\,O\big(\varrho(n)n^{11/6}\big)
{n \choose k_{m}}\, e^{-\beta\, 2k_{m}\theta_{k_{m}}},
\label{eq:cap G ubound}\\ \nonumber
\mathrm{cap}\left(A_{k},A_{k'}\right) 
& \geq & \frac{1}{Z}\,e^{-\beta H_{n}\left(\boxminus\right)}\,\Omega\left(n^{-1} 
e^{-\left(\beta+\tfrac{1}{\sqrt{3}}\right)
\sqrt{\log n}}\right)
\label{eq:cap G lowbound}
{n \choose k_{m}}\, e^{-\beta\,2k_{m}\theta_{k_{m}}},
\end{eqnarray}
where
\begin{eqnarray}
k_{m} = \mathrm{argmin}_{\substack{k\leq j \leq k'}} {n \choose j}e^{-\beta\,2j\theta_{j}}.
\end{eqnarray}
\end{lemma}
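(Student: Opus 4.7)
The plan is to adapt the two-sided argument of Lemma~\ref{lem:cap bounds u l} from the one-dimensional perturbed Curie--Weiss setting to $\ER_n(p)$, where the volume process $\{|\xi_t|\}$ is no longer Markov. Since $\mu(\sigma)$ depends on $\sigma$ only through $|\sigma|$ and $|\partial_E\sigma|$ via \eqref{eq:Hn as boundary size}, the Gibbs weights of configurations at the same volume differ only through the edge-boundary, whose fluctuations I would control using Lemmas~\ref{lem: degree bound}--\ref{lem: phi i k bounds}. Both bounds are then obtained from the variational formulation of capacity by concentrating attention on the bottleneck level $k_m$.

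\textbf{Upper bound.} I would use the test function $f(\sigma)=\mathbbm{1}\{|\sigma|<k_m\}$, which lies in $Q(A_k,A_{k'})$ because $k<k_m\leq k'$. The Dirichlet form then collapses to a single-layer sum
\begin{equation*}
\mathscr{E}(f,f)=\sum_{\sigma\in A_{k_m-1}}\mu(\sigma)\sum_{\sigma'\in A_{k_m}:\,\sigma'\sim\sigma}r(\sigma,\sigma').
\end{equation*}
The inner rate sum is bounded by $O(n)$ via Lemma~\ref{lem: Rate bounds}(a). For the outer sum, I would split the configurations $\sigma\in A_{k_m-1}$ into a typical set in which $|\partial_E\sigma|$ lies within a Gaussian window of its mean $p(k_m-1)(n-k_m+1)$ and an atypical tail, controlled respectively by \eqref{eq:phi k i} and \eqref{eq:phi k i 2} with a suitable $\varrho$. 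Combining the two pieces should produce the prefactor $O(\varrho(n)n^{11/6})\cdot\binom{n}{k_m}e^{-2\beta k_m\theta_{k_m}}e^{-\beta H_n(\boxminus)}/Z$; the exponent $11/6$ will emerge as the optimal trade-off between the Gaussian tail $e^{-2i^2/k(n-k)}$ from \eqref{eq:phi k i 2} and the exponential Gibbs cost $e^{-2\beta i/n}$ of configurations whose edge-boundary deviates by $i$.

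\textbf{Lower bound.} I would use a pathwise Poincar\'e inequality along direct paths in $\mathcal{L}_{A_k,A_{k'}}$. For any $f\in Q(A_k,A_{k'})$ and any $\gamma=(\gamma_0,\ldots,\gamma_{k'-k})\in\mathcal{L}_{A_k,A_{k'}}$, Cauchy--Schwarz gives
\begin{equation*}
1=[f(\gamma_0)-f(\gamma_{k'-k})]^2\leq(k'-k)\sum_{j=0}^{k'-k-1}[f(\gamma_j)-f(\gamma_{j+1})]^2.
\end{equation*}
I would place a uniform unit flow on direct paths restricted to the high-probability set of configurations with $|\partial_E\gamma_j|$ within $\sqrt{\tfrac{1}{3}\gamma_j(n-\gamma_j)\log n}$ of its mean at each level (a $(1-n^{-1/3})$-fraction of every $A_j$ by \eqref{eq:phi k i 2}), average the inequality against $\mu(\sigma)r(\sigma,\sigma')$, and identify the worst edge-conductance at the bottleneck level $j=k_m$. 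The $\tfrac{2}{\sqrt{3}}\sqrt{\log n}$ energy slack from the boundary window, combined with the forward-rate factor from Lemma~\ref{lem: Rate bounds}, should yield the claimed $\Omega\bigl(n^{-1}e^{-(\beta+1/\sqrt{3})\sqrt{\log n}}\bigr)$ prefactor.

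\textbf{Main obstacle.} The hard part is the lower bound: one must exhibit a sufficiently rich family of direct paths on the random graph, each of which stays inside the typical-boundary set at \emph{every} level $j\in[k,k']$ and along which $r(\gamma_j,\gamma_{j+1})$ is near-maximal, so that the flow-to-conductance ratio is controlled uniformly. Coordinating the edge-boundary concentration from Lemma~\ref{lem: phi i k bounds} with the local-degree estimates from Lemma~\ref{lem: degree bound} along a single coherent path (rather than a random walk on level sets) is the delicate step, and the constant $\tfrac{1}{\sqrt{3}}$ records precisely the thickness of the typical window that is required for this flow construction to be well-defined.
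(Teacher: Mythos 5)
Your upper bound is essentially the paper's argument: you take the indicator test function of the half-space below the bottleneck level $k_m$, which collapses the Dirichlet form to a single-layer sum, and then control the Gibbs weight at levels $k_m-1$, $k_m$ by splitting configurations according to the deviation of $|\partial_E\sigma|$ from its mean and applying Lemma~\ref{lem: phi i k bounds}. One small inaccuracy: the exponent $\tfrac{11}{6}$ does not come purely from the trade-off between the Gaussian tail and the Gibbs cost $e^{-2\beta i/n}$; that trade-off produces $n^{5/6}$, and the remaining factor $n$ is a union bound over the $n$ possible values of $k_m$ (which is needed because $k_m$ is itself a random quantity determined by the realised graph).

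For the lower bound, however, you propose a genuinely different route from the paper, and the route you sketch has a real gap. You want a flow/canonical-path argument: Cauchy--Schwarz along each direct path plus a uniform flow on ``typical'' direct paths, with the energy bounded by the congestion at the bottleneck level. For this to work you need the flow to be supported on paths $\gamma$ that stay inside the typical-boundary window at \emph{every} level $j\in[k,k']$, and as you yourself flag, this is the hard part. It is not merely delicate---it is not clear it can be done at all without a new idea. The concentration estimate only gives that the atypical set at each level has relative size $O(n^{-1/3})$; since a direct path visits up to $\Theta(n)$ levels, a uniformly random direct path would be expected to visit $\Theta(n^{2/3})$ atypical levels, so ``most'' direct paths are useless and nothing in your outline produces an explicit family of adapted paths that dodge the atypical sets at every step. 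The paper circumvents this issue entirely. Its Step~2 is a pigeonhole/counting argument over $\mathcal{L}_{A_k,A_{k'}}$: for any $f\in Q(A_k,A_{k'})$, it shows there must exist \emph{one} bottleneck level $i_\blacklozenge$ at which the set $b_{i_\blacklozenge}$ of ``jump pairs'' (pairs $\sigma\sim\xi$ with $|f(\sigma)-f(\xi)|\geq(k'-k)^{-1}$) has cardinality at least $\tfrac{k}{k'-k}\binom{n}{k+i_\blacklozenge}$, by comparing the path count $\sum_i b_i z_i$ with $|\mathcal{L}_{A_k,A_{k'}}|$. Step~3 then shows, via Lemma~\ref{lem: phi i k bounds}, that at least half of the configurations contributing to $b_{i_\blacklozenge}$ have a well-controlled edge boundary, and lower-bounds $\mathscr{E}(f,f)$ by the contribution of those pairs alone. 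Crucially, typicality is needed only at the single level $k+i_\blacklozenge$, not along entire paths. If you wanted to rescue the flow approach, you would either have to construct adapted paths vertex-by-vertex (which requires controlling how $|\partial_E\sigma|$ drifts under judicious spin flips), or adopt the paper's bottleneck-counting idea---at which point the flow has done nothing for you. So the counting argument is not just an alternative: it is the key missing ingredient in your sketch.
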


\begin{proof}
Recall from \eqref{eq: Dirichlet form} and \eqref{eq:capAB} that 
\begin{equation}
\mathrm{cap}\left(A_{k},A_{k'}\right)  = \min_{f\in Q}\mathscr{E}\left(f,f\right)
= \min_{f\in Q}\,\,\tfrac{1}{2} \sum_{\sigma,\xi\in S_{n}}\mu\left(\sigma\right)r\left(\sigma,\xi\right)
\left[f\left(\sigma\right)-f\left(\xi\right)\right]^{2},
\end{equation}
where 
\begin{equation}
Q(A_k,A_{k'})=\left\{ f\colon\, S_{n}\to\left[0,1\right]\colon\, f|_{A_{k}}\equiv1,\, f|_{A_{k'}}\equiv0\right\}.
\end{equation}
The proof comes in 3 Steps.

\medskip\noindent
{\bf 1.}
We first prove the upper bound in \eqref{eq:cap G ubound}. Let $B=\bigcup_{j=k}^{k_{m}-1}A_{j}$, 
and note that, by \eqref{eq:mu}, 
\begin{eqnarray}
\mathrm{cap}\left(A_{k},A_{k'}\right) 
& \leq & \tfrac{1}{2}\sum_{\sigma,\xi\in S_{n}}\mu\left(\sigma\right)r\left(\sigma,\xi\right)
\left[\mathbbm{1}_{B}\left(\sigma\right)-\mathbbm{1}_{B}\left(\xi\right)\right]^{2}
= \sum_{\sigma\in A_{k_{m}-1}}\sum_{\xi\in A_{k_{m}}}\mu\left(\sigma\right)r\left(\sigma,\xi\right)\nonumber \\
& = & \frac{1}{Z}\sum_{\sigma\in A_{k_{m}-1}}\sum_{\xi\in A_{k_{m}},\xi\sim\sigma}
e^{-\beta\max\left\{ H_{n}\left(\xi\right),H_{n}\left(\sigma\right)\right\}}\nonumber \\
& = & \frac{1}{Z}\left(\sum_{\sigma\in A_{k_{m}-1}}\sum_{\substack{\xi\in A_{k_{m}},\xi\sim\sigma\\
H_{n}\left(\sigma\right)\geq H_{n}\left(\xi\right)}}
e^{-\beta H_{n}\left(\sigma\right)}+\sum_{\sigma\in A_{k_{m}-1}}
\sum_{\substack{\xi\in A_{k_{m}},\xi\sim\sigma\\
H_{n}\left(\sigma\right)<H_{n}\left(\xi\right)}}
e^{-\beta H_{n}\left(\xi\right)}\right)\nonumber \\
& \leq & \frac{1}{Z}\,\max\left\{ k_{m},n-k_{m}\right\}\,\left(\sum_{\sigma\in A_{k_{m}-1}}
e^{-\beta H_{n}\left(\sigma\right)}+\sum_{\xi\in A_{k_{m}}}
e^{-\beta H_{n}\left(\xi\right)}\right).
 \label{eq: cap ak ak' last line}
\end{eqnarray}
Recall from (\ref{eq:phi}) that $\phi_{i}^{k}$ denotes the cardinality of the set of all $\sigma\in A_{k}$ 
with $\left|\partial_{E}\sigma\right|=pk\left(n-k\right)+i$. Note from (\ref{eq:Hn as boundary size}) 
that for any $\xi\in A_{k_{m}}$ such that $\left|\partial_{E}\xi\right|=pk_{m}\left(n-k_{m}\right)+i$,
\begin{equation}
e^{-\beta H_{n}\left(\xi\right)} = e^{-\beta H_{n}\left(\boxminus\right)} e^{-\beta\,(2k_{m}\theta_{k_{m}}+\tfrac{2i}{n})}.
\end{equation}
There are ${n \choose k_{m}}$ terms in the sum, and therefore we get 
\begin{eqnarray}
\sum_{\xi\in A_{k_{m}}} e^{-\beta H_{n}\left(\xi\right)}
& = & e^{-\beta H_{n}\left(\boxminus\right)} 
\sum_{i=-pk_{m}\left(n-k_{m}\right)}^{\left(1-p\right)k_{m}\left(n-k_{m}\right)}\phi_{i}^{k_{m}}
e^{-\beta\left(2k_{m}\theta_{k_{m}}+\tfrac{2i}{n}\right)}
\nonumber \\
& = & e^{-\beta H_{n}\left(\boxminus\right)}\left(\sum_{i<-Y}\phi_{i}^{k_{m}}
e^{-\beta\left(2k_{m}\theta_{k_{m}}+\tfrac{2i}{n}\right)}
+ \sum_{i\geq-Y}e^{-\beta\left(2k_{m}\theta_{k_{m}}+\tfrac{2i}{n}\right)}\right)\\ \nonumber
& \leq & e^{-\beta H_{n}\left(\boxminus\right)}\left({n \choose k_{m}} 
e^{-\beta(2k_{m}\theta_{k_{m}}-\tfrac{2Y}{n})}+\sum_{i<-Y}\phi_{i}^{k_{m}}
e^{-\beta\left(2k_{m}\theta_{k_{m}}+\tfrac{2i}{n}\right)}\right) 
\label{eq:exp bounds 1}
\end{eqnarray}
with $Y=\sqrt{\log(\varrho(n)^{2}n^{5/6})k_{m}(n-k_{m})}$. The choice of $Y$ will become clear shortly. 
The summand in the right-hand side can be bounded as follows. By (\ref{eq:H conc ineq.}) in 
Lemma~\ref{lem: degree bound}, the sum over $i<-Y$ can be restricted to $-cn^{3/2}\leq i<-Y$,
since with high probability no configuration has a boundary size that deviates by more than 
$cn^{3/2}$ from the mean. But, using Lemma~\ref{lem: phi i k bounds}, we can also bound from 
above the number of configurations that deviate by at most $Y$ from the mean, i.e., we can bound 
$\phi_{i}^{k_{m}}$  for $-cn^{3/2}\leq i<-Y$. Taking a union bound over $0\leq k\leq n$ and 
$-cn^{3/2}\leq i<-Y$, we get
\begin{equation}
\mathbb{P}\left[\bigcup_{k=0}^{n}\,\,\bigcup_{i=-cn^{3/2}}^{-Y}\left\{ \phi_{i}^{k_{m}}
\geq\varrho\left(n\right)n^{5/2}{n \choose k_{m}}
e^{-\frac{2i^{2}}{k_{m}\left(n-k_{m}\right)}}\right\} \right]\leq\frac{1}{\varrho\left(n\right)}.
\label{eq:phi i k' bound}
\end{equation}
Thus, with $\mathbb{P}_{\ER_n(p)}$-probability at least $\geq1-\frac{1}{\varrho\left(n\right)}$,
\begin{eqnarray}
\sum_{i<-Y}\phi_{i}^{k_{m}} e^{-\beta\left(2k_{m}\,\theta_{k_{m}}+\tfrac{2i}{n}\right)}
&&\leq \sum_{i>Y}\varrho\left(n\right)n^{5/2}{n \choose k_{m}}
e^{-2i\left(\tfrac{i}{k_{m}\left(n-k_{m}\right)}-\tfrac{\beta}{n}\right)}
e^{-\beta\,2k_{m}\,\theta_{k_{m}}}\\ \nonumber
&&\leq \varrho\left(n\right)n^{5/2}{n \choose k_{m}}
e^{-\beta\,2k_{m}\,\theta_{k_{m}}}
e^{-2\log(\varrho(n)n^{5/6})}\\ \nonumber
&&\leq\frac{n^{5/6}}{\varrho\left(n\right)}{n \choose k_{m}}
e^{-\beta\,2k_{m}\,\theta_{k_{m}}},
\end{eqnarray}
where we use that, for $i>Y$ and $n$ sufficiently large, 
\begin{equation}
\tfrac{i}{k_{m}\left(n-k_{m}\right)}-\tfrac{\beta}{n}\geq
\sqrt{\tfrac{\log(\varrho\left(n\right)^{2}n^{5/6})}{k_{m}\left(n-k_{m}\right)}}-\tfrac{\beta}{n}
\geq\sqrt{\tfrac{\log(\varrho\left(n\right)n^{5/6})}{k_{m}\left(n-k_{m}\right)}}.
\end{equation}
The above inequality also clarifies our choice of $Y$. Substituting this into (\ref{eq:exp bounds 1}), 
we see that
\begin{eqnarray}
\sum_{\xi\in A_{k_{m}}} e^{-\beta H_{n}\left(\xi\right)}
&\leq& [1+o_{n}\left(1\right)]\, e^{-\beta H_{n}\left(\boxminus\right)} e^{\tfrac{2\beta Y}{n}}
e^{-\beta\,2k_{m}\,\theta_{k_{m}}}{n \choose k_{m}}\\ \nonumber
&=& O\big(\varrho(n)n^{5/6}\big)\,
e^{-\beta H_{n}\left(\boxminus\right)} e^{-\beta\,2k_{m}\theta_{k_{m}}}{n \choose k_{m}}.
\label{eq: exp sum bnd1}
\end{eqnarray}
A similar bound holds for $\sum_{\xi\in A_{k_{m}-1}} e^{-\beta H_{n}\left(\xi\right)}$. A union bound 
over $1\leq k_{m} \leq n$ increases the exponent $\frac{5}{6}$ to $\frac{11}{6}$. Together with 
(\ref{eq: cap ak ak' last line}), this proves the upper bound in \eqref{eq:cap G ubound}. 

\medskip\noindent
{\bf 2.}
We next derive a combinatorial bound that will be used later for the proof of the lower bound 
in \eqref{eq:cap G ubound}. Note that if $f\in Q(A_k,A_{k'})$ and $\gamma\in\mathcal{L}_{A_{k},A_{k'}}$
(recall (\ref{eq:direct paths})), then there must be some $1\leq i\leq k'-k$ such that 
\begin{equation}
\left|f\left(\gamma_{i}\right)-f\left(\gamma_{i+1}\right)\right|\geq\left(k'-k\right)^{-1}.
\label{eq:must jump}
\end{equation}
A simple counting argument shows that 
\begin{equation}
\left|\mathcal{L}_{A_{k},A_{k'}}\right|={n \choose k}\,\frac{\left(n-k\right)!}{\left(n-k'\right)!},
\end{equation}
since for each $\sigma\in A_{k}$ there are $\left(n-k\right)\times\left(n-k-1\right)\times\cdots
\times\left(n-k'+1\right)$ paths in $\mathcal{L}_{A_{k},A_{k'}}$ from $\sigma$ to $A_{k'}$. Let
\begin{equation}
b_{i}=\left|\left\{ \left(\sigma,\xi\right)\in A_{k+i-1}\times A_{k+i}\colon\,
\left|f\left(\sigma\right)-f\left(\xi\right)\right|
\geq\left(k'-k\right)^{-1},\,\sigma\sim\xi\right\} \right|, \quad 1\leq i\leq k'-k.
\end{equation}
We claim that 
\begin{equation}
\label{eq:lozenge}
\exists\,1\leq i_{\blacklozenge}\leq k'-k\colon \qquad  
b_{i_{\blacklozenge}}\geq\frac{k}{k'-k}{n \choose k+i_{\blacklozenge}}. 
\end{equation}
Indeed, the number of paths in $\mathcal{L}_{A_{k},A_{k'}}$ that pass through $\sigma\in A_{k+i_{\blacklozenge}-1}$ 
followed by a move to $\xi\in A_{k+i_{\blacklozenge}}$ equals
\begin{equation}
z_{i_{\blacklozenge}}=\frac{\left(k+i_{\blacklozenge}-1\right)!}{k!}\times
\frac{\left(n-k-i_{\blacklozenge}\right)!}{\left(n-k'\right)!},
\label{eq:pathcount}
\end{equation}
where the first term in the product counts the number of paths from $\sigma \in A_{k+i_{\blacklozenge}+1}$ 
to $A_{k}$, while the second term counts the number of paths from $\xi \in A_{k+i_{\blacklozenge}}$ to $A_{k'}$. 
Thus, if \eqref{eq:lozenge} fails, then 
\begin{equation}
\sum_{i=1}^{k'-k}b_{i}z_{i} <  \frac{k}{k'-k}\sum_{i=1}^{k-k'}\frac{1}{k+i}\frac{n!}{k!\left(n-k'\right)!}
\leq \frac{n!}{k!\left(n-k'\right)!}=\left|\mathcal{L}_{A_{k},A_{k'}}\right|,
\end{equation}
which in turn implies that (\ref{eq:must jump}) does not hold for some $\gamma\in\mathcal{L}_{A_{k},A_{k'}}$
(use that $b_{i}z_{i}$ counts the paths that satisfy condition (\ref{eq:must jump})), which is 
a contradiction. Hence the claim in \eqref{eq:lozenge} holds.

\medskip\noindent
{\bf 3.}
In this part we prove the lower bound in \eqref{eq:cap G ubound}. By Lemma~\ref{lem: phi i k bounds} we 
have that, with $\mathbb{P}_{\ER_n(p)}$-probability at least $1-\frac{1}{\varrho(n)}$, for any $Y\geq0$, 
\begin{equation}
\sum_{j\geq\sqrt{Y\left(k+i_{\blacklozenge}\right)\left(n-k-i_{\blacklozenge}\right)}}
\phi_{j}^{k+i_{\blacklozenge}}\leq\varrho\left(n\right){n \choose k+i_{\blacklozenge}}
e^{-2Y}.
\end{equation}
Picking $Y=\log(\varrho\left(n\right)k^{-1}2n^{3/2})$, we get that 
\begin{equation}
\sum_{j\geq\sqrt{Y\left(k+i_{\blacklozenge}\right)\left(n-k-i_{\blacklozenge}\right)}}
\phi_{j}^{k+i_{\blacklozenge}} \leq\frac{1}{4}\frac{k^{2}\varrho(n)}{\varrho(n)^{2}n^{3}}
{n \choose k+i_{\blacklozenge}} 
\leq\frac{1}{2}\frac{k}{n\left(k'-k\right)}{n \choose k+i_{\blacklozenge}},
\end{equation}
and so at least half of the configurations contributing to $b_{i_{\blacklozenge}}$ have an 
edge-boundary of size at most
\begin{equation}
p\left(k+i_{\blacklozenge}\right)\left(n-k-i_{\blacklozenge}\right)
+\sqrt{Y\left(k+i_{\blacklozenge}\right)\left(n-k-i_{\blacklozenge}\right)}.
\end{equation} 
If $\xi\in A_{k+i_{\blacklozenge}}$ is such a configuration, then by Lemma~\ref{lem: degree bound} 
the same is true for any $\sigma\sim\xi$ (i.e., configurations differing at only one vertex), since 
\begin{equation}
\left|\partial_{E}\sigma\right|\leq\left|\partial_{E}\xi\right|+ \max_{v\in\sigma\triangle\xi} \deg\left(v\right)
\leq\left|\partial_{E}\xi\right|+pn+o\left(\rho(n)\sqrt{n\log n}\,\right).
\end{equation}
This implies 
\begin{eqnarray}
\label{eq:Dirichlet ineq 1}
\mathscr{E}\left(f,f\right) 
& = & \tfrac{1}{2}\sum_{\sigma,\xi\in S_{n}}\mu\left(\sigma\right)r\left(\sigma,\xi\right)
\left[f\left(\sigma\right)-f\left(\xi\right)\right]^{2}\nonumber \\
&&\geq \frac{1}{Zn^{2/3}}\sum_{\xi\in A_{k+i_{\blacklozenge}}}\sum_{\sigma\in A_{k+i_{\blacklozenge}-1}}
e^{-\beta\max\left\{ H_{n}\left(\sigma\right),H_{n}\left(\xi\right)\right\}}\nonumber \\
&& \geq e^{-\beta H_{n}\left(\boxminus\right)} \frac{k}{2Zn^{2}} {n \choose k+i_{\blacklozenge}}
\exp\left(-\beta\left(2(k+i_{\blacklozenge})\theta_{k+i_{\blacklozenge}}
+2\tfrac{\sqrt{Y\left(k+i_{\blacklozenge}\right)\left(n-k-i_{\blacklozenge}\right)}}{n}\right)\right).
\end{eqnarray}
Therefore
\begin{eqnarray}
\label{eq:Dirichlet ineq 2}
\mathscr{E}\left(f,f\right) 
& \geq & e^{-\beta H_{n}\left(\boxminus\right)} e^{-\beta\sqrt{Y}}
\min_{1\leq i\leq k'-k}\frac{k}{2Zn^{2}} {n \choose k+i}
e^{-\beta\left(2\left(k+i\right)\theta_{k+i}\right)}\\ \nonumber
& = & e^{-\beta H_{n}\left(\boxminus\right)} e^{-\beta\sqrt{Y}}
\frac{k}{2Zn^{2}} {n \choose k_{m}}
e^{-\beta\left(2\left(k+i\right)\theta_{k-{m}}\right)}.
\end{eqnarray}
Since (\ref{eq:Dirichlet ineq 2}) is true for any $f\in Q(A_k,A_{k'})$, the lower bound in 
\eqref{eq:cap G ubound} follows, with $k_m$ defined in (\ref{eq:cap G lowbound}).
\end{proof}

\subsection{Hitting probabilities on $\ER_n(p)$}
\label{ss:rankorder}

Let $\mu_{A_{\mathbf{M}}}$ be the equilibrium distribution $\mu$ conditioned to the set $A_{\mathbf{M}}$.
Write $\mathbb{P}^{l}$ and $\mathbb{P}^{u}$ denote the laws of the processes $\{\xi_{t}^{l}\} _{t\geq0}$ 
and $\{ \xi_{t}^{u}\} _{t\geq0}$, respectively. The following lemma is the crucial sandwich
for comparing the crossover times of the dynamics on $\ER_n(p)$ and the perturbed dynamics on $K_n$.   

\begin{lemma}[{\bf Rank ordering of hitting probabilities}]
\label{lem: Hitting order} 
With $\mathbb{P}_{\ER_n(p)}$-probability tending to $1$ as $n\to\infty$, 
\begin{equation}
\begin{aligned}\max_{\xi\in A_{\mathbf{M}^l}}
\mathbb{P}_{\xi}^{l}\left[\tau_{\mathbf{S}^l}<\tau_{\mathbf{M}^l}\right] 
\leq\mathbb{P}_{\mu_{A_{\mathbf{M}}}}\left[\tau_{\mathbf{S}}<\tau_{\mathbf{M}}\right] 
\leq \min_{\sigma\in A_{\mathbf{M}^u}}\mathbb{P}^{u}_{\sigma}\left[\tau_{\mathbf{S}^u}
<\tau_{\mathbf{M}^u}\right]. 
\end{aligned}
\label{eq:lemma ordered times}
\end{equation}
\end{lemma}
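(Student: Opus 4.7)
The plan is to rewrite each probability as a ratio of a capacity to an equilibrium mass, and then to compare the $\ER_n(p)$-ratio with the two perturbed Curie--Weiss ratios using Lemmas~\ref{lem:cap bounds u l} and~\ref{lem: cap G bound}. The field shifts $h^u-h=+(1+\varepsilon)\log(n^{11/6})/n$ and $h-h^l=+(1+\varepsilon)\log(n^{11/6})/n$ are tuned precisely so that the multiplicative gain, respectively damping, they produce on the $K_n$-side dominates the polynomial error $n^{11/6}$ appearing in the $\ER_n(p)$-capacity bound of Lemma~\ref{lem: cap G bound}.

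First I would reduce the claim to a comparison of capacity-to-mass ratios. Because the magnetization processes $\{\theta^u_t\}, \{\theta^l_t\}$ are Markov with rates depending only on magnetization, the probabilities $\mathbb{P}^u_\sigma[\tau_{\mathbf{S}^u}<\tau_{\mathbf{M}^u}]$ and $\mathbb{P}^l_\xi[\tau_{\mathbf{S}^l}<\tau_{\mathbf{M}^l}]$ do not depend on the starting configuration (only on its magnetization), so the max and min in \eqref{eq:lemma ordered times} are in fact values. The standard reversibility identity $\mathbb{P}_{\pi_A}[\tau_B<\tau_A^+]=\mathrm{cap}(A,B)/\pi(A)$ then gives
\[
\mathbb{P}^{u}_\sigma[\tau_{\mathbf{S}^{u}}<\tau_{\mathbf{M}^{u}}] = \frac{\mathrm{cap}^{u}(\mathbf{m}^{u},\mathbf{s}^{u})}{\nu^{u}(\mathbf{m}^{u})}, \qquad \mathbb{P}_{\mu_{A_\mathbf{M}}}[\tau_\mathbf{S}<\tau_\mathbf{M}]=\frac{\mathrm{cap}(A_\mathbf{M},A_\mathbf{S})}{\mu(A_\mathbf{M})},
\]
and analogously for $\{\xi^l_t\}$.

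For the right-hand inequality I would upper bound $\mathrm{cap}(A_\mathbf{M},A_\mathbf{S})$ by Lemma~\ref{lem: cap G bound}, lower bound $\mathrm{cap}^u(\mathbf{m}^u,\mathbf{s}^u)$ by Lemma~\ref{lem:cap bounds u l}, and sandwich the masses $\mu(A_\mathbf{M})$ and $\nu^u(\mathbf{m}^u)$ via Stirling's formula \eqref{eq:Stirling} and the Hamiltonian concentration \eqref{eq:H conc ineq.} applied to $H_n(\boxminus)=-|E|/n+hn$. All four estimates share the same exponential factor to leading order, once one identifies $k_m\approx\mathbf{T}_n\approx\mathbf{T}^u_n$ in Lemma~\ref{lem: cap G bound}. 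In the ratio of the two capacity-to-mass ratios, what survives is, on the $K_n$-side, the Boltzmann gain $\exp\{\beta n(h^u-h)(\mathbf{t}-\mathbf{m})\}=n^{\beta(1+\varepsilon)(11/6)(\mathbf{t}-\mathbf{m})}$ coming from evaluating the perturbed field at the saddle versus the metastable set, and on the $\ER_n(p)$-side the polynomial loss $O(\rho(n)n^{11/6})$ from Lemma~\ref{lem: cap G bound} together with subpolynomial Stirling/Hoeffding corrections. Provided the former dominates the latter, the inequality follows. The left-hand inequality is symmetric, using the lower bound on $\mathrm{cap}$ from Lemma~\ref{lem: cap G bound} and the upper bound on $\mathrm{cap}^l$ from Lemma~\ref{lem:cap bounds u l}, with $h^l$ producing the corresponding damping on the $K_n$-side.

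The main obstacle is the careful bookkeeping of all the polynomial, logarithmic, and $\rho(n)$ corrections coming out of Lemmas~\ref{lem:cap bounds u l}, \ref{lem: cap G bound} together with the Stirling/Hoeffding estimates for the partition sums, and verifying that they assemble into a single power of $n$ strictly smaller than the gain $n^{\beta(1+\varepsilon)(11/6)(\mathbf{t}-\mathbf{m})}$. The exponent $11/6$ in \eqref{eq: h perturbation} is chosen to match the polynomial prefactor of Lemma~\ref{lem: cap G bound} (which itself arose from a union bound over $n$ level sets of $n^{5/6}$-type fluctuations), while the arbitrary $\varepsilon>0$ furnishes the slack needed to absorb the remaining lower-order terms. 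Once these estimates align, the two inequalities in \eqref{eq:lemma ordered times} follow directly from the capacity-to-mass representations above.
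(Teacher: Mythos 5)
Your proposal follows the paper's argument essentially verbatim: express each probability as a capacity-to-mass ratio, bound the $K_n$-capacities via Lemma~\ref{lem:cap bounds u l} and the $\ER_n(p)$-capacity via Lemma~\ref{lem: cap G bound}, control the masses via Lemma~\ref{lem: phi i k bounds} and Stirling, and verify that the field shift in~\eqref{eq: h perturbation} dominates the resulting polynomial error. One point worth tightening is the conversion step: for the continuous-time chain the reversibility identity is $\mathrm{cap}(A,B)=\sum_{\sigma\in A}\sum_{\sigma'}\mu(\sigma)r(\sigma,\sigma')\,\mathbb{P}_\sigma[\tau_B<\tau_A]$, which carries the total escape rate out of $A$ (a $\Theta(n)$ factor); your two displayed ratios omit it, and the paper (Step~3 of its proof, invoking Lemma~\ref{lem:H near m}) factors this rate out explicitly so it can be compared against the analogous escape rate $q^{u}(a,a+\tfrac{2}{n})+q^{u}(a,a-\tfrac{2}{n})$ on the $K_n$-side.
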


\begin{proof}

The proof comes in 3 Steps.

\medskip\noindent
{\bf 1.}
Recall from (\ref{eq:magnetization}) that the magnetization of $\sigma\in A_{k}$ is 
$m\left(\sigma\right)=2\frac{k}{n}-1$. We first observe that the maximum and the 
minimum in (\ref{eq:lemma ordered times}) are redundant, because by symmetry 
\begin{equation}
\begin{aligned}
\max_{\xi\in A_{k}}\mathbb{P}_{\xi}^{l}\left[\tau_{A_{k'}}<\tau_{A_{k}}\right]
&= \min_{\xi\in A_{k}}\mathbb{P}_{\xi}^{l}\left[\tau_{A_{k'}}<\tau_{A_{k}}\right],\\
\min_{\xi\in A_{k}}\mathbb{P}_{\xi}^{u}\left[\tau_{A_{k'}}<\tau_{A_{k}}\right]
&= \max_{\xi\in A_{k}}\mathbb{P}_{\xi}^{u}\left[\tau_{A_{k'}}<\tau_{A_{k}}\right].
\end{aligned}
\end{equation}
Recall that $\{\xi_{t}^{l}\}_{t\geq0}$ is the Markov process on $S_{n}$ governed by the
Hamiltonian $H_{n}^{l}$ in (\ref{eq:Hl}), and that the associated magnetization process 
$\{\theta_{t}^{l}\} _{t\geq0}=\{m(\xi_{t}^{l})\} _{t\geq0}$ is a Markov process on the set 
$\Gamma_n$ in \eqref{eq:InJndef} with transition rates given by $q^{l}$ in (\ref{eq:ql}). Denoting 
by $\hat{\mathbb{P}}^{l}$ the law of $\{\theta_{t}^{l}\} _{t\geq0}$, we get from (\ref{eq:cap identity}) 
that for any $0\leq k \leq k' < n$, and with $a=\frac{2k}{n}-1$ and $b=\frac{2k'}{n}-1$,
\begin{eqnarray}
cap^{l}\left(a,b\right) &=& \sum_{u \in \Gamma_n} \nu ^{l}\left(a\right) q^{l}\left(a,u\right)
\hat{\mathbb{P}}_{a}^{l}\left[\tau_{b}<\tau_{a}\right]\\ \nonumber
&=&\nu ^{l}\left(a\right) \left[ q^{l}\left(a,a+\tfrac{2}{n}\right) 
+ q^{l}\left(a,a-\tfrac{2}{n}\right) \right] 
\hat{\mathbb{P}}_{a}^{l}\left[\tau_{b}<\tau_{a}\right], 
\end{eqnarray}
and therefore
\begin{equation}
\max_{\xi\in A_{k}}\mathbb{P}_{\xi}^{l}\left[\tau_{A_{k'}}<\tau_{A_{k}}\right] 
= \hat{\mathbb{P}}_{a}^{l}\left[\tau_{b}<\tau_{a}\right]
= \left[\nu^{l}\left(a\right)\left(q^{l}\left(a,a+\tfrac{2}{n}\right)
+q^{l}\left(a,a-\tfrac{2}{n}\right)\right)\right]^{-1}\mathrm{cap}^{l}\left(a,b\right).
\label{eq: Pl1}
\end{equation}
By (\ref{eq:mu q Psi relation}), using the abbreviations
\begin{equation}
\Psi_{1}=\max\left\{ \Psi^{l}\left(a\right),\Psi^{l}\left(a+\tfrac{2}{n}\right)\right\}, \qquad 
\Psi_{2}=\max\left\{ \Psi^{l}\left(a\right),\Psi^{l}\left(a-\tfrac{2}{n}\right)\right\},
\end{equation}
we have, with the help of (\ref{eq:Psi difference}),
\begin{eqnarray}
\nu^{l}\left(a\right)\left(q^{l}\left(a,a+\tfrac{2}{n}\right)+q^{l}\left(a,a-\tfrac{2}{n}\right)\right)
&=& \frac{1}{z^{l}}\frac{n}{2}{n \choose \frac{n}{2}\left(1+a\right)}
\Big(\left(1-a\right) e^{-\beta n\Psi_{1}}+\left(1+a\right)e^{-\beta n\Psi_{2}}\Big)
\label{eq: Pl2}\\
&\geq& \frac{1}{z^{l}}n e^{-2\beta\left(p\left|a\right|+h^{l}+\tfrac{p}{n}\right)}
e^{-\beta n\Psi^{l}\left(a\right)} {n \choose \frac{n}{2}\left(1+a\right)}. \nonumber
\end{eqnarray}
From Lemma~\ref{lem:cap bounds u l} we have that 
\begin{equation}
\mathrm{cap}^{l}\left(a,b\right)\leq\tfrac{n\left(1-a\right)}{2z^{l}}
e^{-\beta n\Psi^{l}\left(b\right)}{n \choose \frac{n}{2}\left(1+b\right)}.
\label{eq:Pl3}
\end{equation}
Putting (\ref{eq: Pl1}), (\ref{eq: Pl2}) and (\ref{eq:Pl3}) together, we get 
\begin{eqnarray}
\label{eq:AkP}
\max_{\xi\in A_{k}}\mathbb{P}_{\xi}^{l}\left[\tau_{A_{k'}}<\tau_{A_{k}}\right] 
\leq \tfrac{\left(1-a\right)}{2} e^{2\beta\left(p\left|a\right|+h^{l}
+\tfrac{p}{n}\right)}
e^{-\beta n\left[\Psi^{l}\left(b\right)-\Psi^{l}\left(a\right)\right]}
{n \choose \frac{n}{2}\left(1+b\right)}{n \choose \frac{n}{2}\left(1+a\right)}^{-1}.
\end{eqnarray}
Similarly, denoting by $\hat{\mathbb{P}}^{u}$ the law of $\{\theta_{t}^{u}\} _{t\geq0}$, we have
\begin{eqnarray}
\label{eq:ust1}
\min_{\xi\in A_{k}}\mathbb{P}_{\xi}^{u}\left[\tau_{A_{k'}}<\tau_{A_{k}}\right] 
& = & \hat{\mathbb{P}}_{a}^{u}\left[\tau_{b}<\tau_{a}\right]\\ \nonumber
 & = & \left[\nu^{u}\left(a\right)\left(q^{u}\left(a,a+\tfrac{2}{n}\right)
 +q^{u}\left(a,a-\tfrac{2}{n}\right)\right)\right]^{-1}\mathrm{cap}^{u}\left(a,b\right),
\end{eqnarray}
where
\begin{equation}
\begin{aligned}
\left[\nu^{u}\left(a\right)\left(q^{u}\left(a,a+\tfrac{2}{n}\right)
+q^{u}\left(a,a-\tfrac{2}{n}\right)\right)\right]^{-1}
\geq\left[\frac{n}{z^{u}} e^{2\beta\left(p\left|a\right|+h^{l}+\tfrac{p}{n}\right)}
e^{-\beta n\Psi^{u}\left(a\right)}{n \choose \frac{n}{2}\left(1+a\right)}\right]^{-1},
\end{aligned}
\end{equation}
and, Lemma \ref{lem:cap bounds u l},
\begin{equation}
\label{eq:ust2}
\mathrm{cap}^{u}\left(a,b\right) \geq  \frac{1}{2nz^{u}}
e^{-\beta n\Psi^{u}\left(b\right)}{n \choose \frac{n}{2}\left(1+b\right)}.
\end{equation}
Putting \eqref{eq:ust1}--\eqref{eq:ust2} together, we get
\begin{equation}
\min_{\xi\in A_{k}}\mathbb{P}_{\xi}^{u}\left[\tau_{A_{k'}}<\tau_{A_{k}}\right] 
\geq \frac{1}{n} e^{-\beta n\left[\Psi^{l}\left(b\right)-\Psi^{l}
\left(a\right)\right]}{n \choose \frac{n}{2}\left(1+b\right)}
{n \choose \frac{n}{2}\left(1+a\right)}^{-1}.
\label{eq: partial bound 1}
\end{equation}

\medskip\noindent
{\bf 2.}
Recall from (\ref{eq:cap identity}) that 
\begin{equation}
\label{eq:capdef1}
\mathrm{cap}\left(A_{k},A_{k'}\right)
=\sum_{\sigma\in A_{k}}\sum_{\xi \in S_{n}}\mu(\sigma)r(\sigma,\xi)
\mathbb{P}_{\sigma}\left[\tau_{A_{k'}}<\tau_{A_{k}}\right].
\end{equation}
Split
\begin{eqnarray}
&&\sum_{\sigma \in A_{k}}\sum_{\xi \in S_{n}} \mu(\sigma)r(\sigma,\xi) 
=  \sum_{\sigma \in A_{k}} \sum_{\xi \in A_{k+1}} \mu(\sigma)r(\sigma,\xi)
+ \sum_{\sigma \in A_{k}} \sum_{\xi \in A_{k-1}} \mu(\sigma)r(\sigma,\xi)\\ \nonumber
&& = \frac{1}{Z} \sum_{\sigma \in A_{k}} \sum_{\xi \in A_{k+1}}
e^{-\beta\max\left\{ H_{n}\left(\sigma\right),H_{n}\left(\xi\right)\right\}}
+\frac{1}{Z}\sum_{\xi\in A_{k}}\sum_{\xi'A_{k-1}}
e^{-\beta\max\left\{ H_{n}\left(\sigma\right),H_{n}\left(\xi\right)\right\}}.
\end{eqnarray}
By Lemma~\ref{lem: phi i k bounds} and a reasoning similar to that leading to 
(\ref{eq:Dirichlet ineq 1}), 
\begin{eqnarray}
\sum_{\xi\in A_{k}} e^{-\beta H_{n}\left(\xi\right)} 
&&=  e^{-\beta H_{n}\left(\boxminus\right)} 
\sum_{i=-pk\left(n-k\right)}^{\left(1-p\right)k\left(n-k\right)}\phi_{i}^{k}
e^{-\beta\left(2k\theta_k+2\tfrac{i}{n}\right)}
\label{eq:exp bounds 2}\\
&&\geq \tfrac{1}{2}{n \choose k} e^{-\beta H_{n}\left(\boxminus\right)} 
e^{-\beta\left(2k\theta_k+2\tfrac{\sqrt{Yk(n-k)}}{n}\right)}\nonumber \\
&&\geq \tfrac{1}{2}{n \choose k} e^{-\beta H_{n}(\boxminus)} 
e^{-\beta\sqrt{\log(\sqrt{2\varrho(n)})}}
e^{-\beta\,2k\theta_k}\nonumber 
\end{eqnarray}
with $Y=\log(\sqrt{2\varrho(n)})$. Indeed, by (\ref{eq:phi k i 2}) fewer than $\frac{1}{2}{n \choose k}$ 
configurations in $A_{k}$ have an edge-boundary of size $\geq pk\left(n-k\right)+\sqrt{k\left(n-k\right)Y}$.
Moreover, if $\xi\sim\xi'$, then, by Lemma \ref{lem: degree bound},
\begin{equation}
e^{\beta[ H_{n}(\xi')-H_{n}(\xi)]}
\leq [1+o(1)]\,e^{\beta\left(p+h\right)},
\end{equation}
and since we may absorb this constant inside the error term $\varrho\left(n\right)$, we get that 
\begin{eqnarray}
\label{eq: Gnp denom lbound}
&&\sum_{\sigma \in A_{k}} \sum_{\xi \in A_{k+1}}
e^{-\beta\max\left\{H_{n}(\sigma),H_{n}(\xi)\right\}}
\geq e^{-\beta H_{n}(\boxminus)}\tfrac{1}{2}\left(n-k\right){n \choose k}
e^{-\tfrac{\beta}{2}\sqrt{\log\sqrt{2\varrho(n)}}}
e^{-\beta\,2k\,\theta_k},\\
\label{eq:Gnp denom lbound2}
&& \sum_{\sigma \in A_{k}} \sum_{\xi \in A_{k-1}}
e^{-\beta\max\left\{H_{n}(\sigma),H_{n}(\xi)\right\}}
\geq e^{-\beta H_{n}(\boxminus)}\tfrac{1}{2} k {n \choose k}
e^{-\tfrac{\beta}{2}\sqrt{\log\sqrt{2\varrho(n)}}}
e^{-\beta\,2k\,\theta_k}, 
\end{eqnarray}
and hence 
\begin{equation}
\begin{aligned}
\sum_{\sigma \in A_{k}} \sum_{\xi \in S_{n}}\mu(\sigma)r(\sigma,\xi)
\geq e^{-\beta H_{n}(\boxminus)}\frac{1}{2Z}{n \choose k}
e^{-\tfrac{\beta}{2}\sqrt{\log\sqrt{2\varrho(n)}}}
e^{-\beta\,2k\,\theta_k}.
\label{eq: Gnp denom lbound3}
\end{aligned}
\end{equation}

\medskip\noindent
{\bf 3.}
Similar bounds can be derived for $\mathbb{P}_{\mu_{A_{k}}}\left[\tau_{A_{k'}} < \tau_{A_{k}}\right]$. 
Indeed, note that, by Lemma \eqref{lem:H near m}, $r\left(\sigma,\xi \right) = 1$ for all $\sigma \in A_{k} $ 
and all but $O(n^{2/3})$ many configurations $\xi \in S_{n}$. Therefore
\begin{eqnarray}
\mathrm{cap}(A_{k},A_{k'}) &=& n\,[1+o(1)] \sum_{\sigma \in A_{k}} \mu(\sigma) 
\mathbb{P}_{\sigma}\left[\tau_{A_{k'}} < \tau_{A_{k}}\right]\\ \nonumber
&=& n\,[1+o(1)]\,\mu(A_{k})\,\mathbb{P}_{\mu_{A_{k}}}\left[\tau_{A_{k'}} < \tau_{A_{k}}\right]
\end{eqnarray}
and hence
\begin{equation}
\mathbb{P}_{\mu_{A_{k}}}\left[\tau_{A_{k'}} < \tau_{A_{k}}\right] = [1+o(1)]\, 
\frac{\mathrm{cap}(A_{k}, A_{k'})}{n\mu(A_{k})}.
\end{equation}
Note that
\begin{equation}
\label{muAk}
\mu(A_{k}) = \frac{1}{Z}\sum_{\sigma \in A_{k}}\,e^{-\beta H_{n}(\sigma)},
\end{equation}
and we have already produced bounds for a sum like \eqref{muAk} in Lemma \ref{lem: cap G bound}. 
Referring to \eqref{eq: exp sum bnd1}, we see that  
\begin{eqnarray}
\mathbb{P}_{\mu_{A_{k}}}\left[\tau_{A_{k'}} < \tau_{A_{k}}\right] 
&\leq& [1+o(1)]\, \frac{\mathrm{cap}( A_{k}, A_{k'})}{\frac{1}{Z}\,e^{-(\beta + \frac{1}{\sqrt3})\sqrt{\log n}}
{n \choose k} e^{-\beta2k\theta_{k}}},\\
\mathbb{P}_{\mu_{A_{k}}}\left[\tau_{A_{k'}} < \tau_{A_{k}}\right] 
&\geq& [1+o(1)]\, \frac{\mathrm{cap}(A_{k}, A_{k'}}{\frac{1}{Z}\,n^{17/6}
e^{-(\beta + \frac{1}{\sqrt3})\sqrt{\log n}}{n \choose k}e^{-\beta 2k\theta_{k}}}.
\end{eqnarray}
Finally, we note that if we let $\Delta_{h}=h-h^{u}$, then 
\begin{equation}
\frac{e^{-\beta n[\Psi^{u}(\mathbf{s}^{u})-\Psi^{u}(\mathbf{m}^{u})]}}
{e^{-\beta n[\Psi(\mathbf{s})-\Psi(\mathbf{m})]}}
= e^{\beta nC_{\beta,h,p}\Delta_{h}},
\end{equation}
where $C_{\beta,h,p}$ is a constant that depends on the parameters $\beta$, $p$ and $h$. A similar 
expression follows for the ratio
\begin{equation}
{n \choose \frac{n}{2}(1+\mathbf{s}^u)}{n \choose \frac{n}{2}(1+\mathbf{m}^u)}^{-1}
\left[{n \choose \frac{n}{2}(1+\mathbf{s})}{n \choose \frac{n}{2}(1+\mathbf{m}}^{-1}\right]^{-1}.
\end{equation}
From this the statement of the lemma follows.
\end{proof}


\section{Invariance under initial states and refined capacity estimates}
\label{s:refcapest}

In this section we use Lemma~\ref{lem: Hitting order} to control the time it takes $\{m(\xi_{t})\} _{t\geq0}$ 
to cross the interval $[\mathbf{t}^{u},\mathbf{s}^{u}]\cap[\mathbf{t}^{l},\mathbf{s}^{l}]$, which will be a 
good indicator of the time it takes $\{\xi_{t}\} _{t\geq0}$ to reach the basin of the stable state $\mathbf{s}$. 
In particular, our aim is to control this time by comparing it with the time it takes $\{\theta_{t}^{u}\} _{t\geq0}$ 
and $\{\theta_{t}^{l}\} _{t\geq0}$ defined in \eqref{eq: theta u l} to do the same for $\mathbf{s}^{u}$ and
$\mathbf{s}^{l}$. In Section~\ref{ss:rare} we derive bounds on the probability of certain rare events for \
the dynamics on $\ER_n(p)$ (Lemmas~\ref{lem:stray}--\ref{lem:ret} below). In Section~\ref{ss:proofuht} 
we use these bounds to prove that hitting times are close to being uniform in the starting configuration.  


\subsection{Estimates for rare events}
\label{ss:rare}

In this section we prove four lemmas that serve as a preparation for the coupling in 
Section~\ref{sec:couplingscheme}. Lemma~\ref{lem:stray} shows that the dynamics starting anywhere 
in $A_{\mathbf{M}}$ is unlikely to stray away from $A_{\mathbf{M}}$ by much during a time 
window that is comparatively small. Lemma~\ref{lem:tv bound exponentials} bounds the total 
variation distance between two exponential random variable whose means are close. 
Lemma~\ref{lem:update jumps} bounds the tail of the distribution of the first time when all the vertices
have been updated. Lemma~\ref{lem:ret} bounds the number of returns to $A_{\mathbf{M}}$ before
$A_{\mathbf{S}}$ is hit.

We begin by deriving upper and lower bounds on the number of jumps $N_{\xi}\left(t\right)$ taken by the 
process $\{\xi_{t}\} _{t\geq0}$ up to time $t$. By Lemma~\ref{lem: Rate bounds}, the jump rate from any 
$\sigma \in S_{n}$ is bounded by 
\begin{equation}
n\,e^{-2\beta\left(p+h\right)}\leq\sum_{\sigma'\in S_{n}}r\left(\sigma,\sigma'\right)\leq n.
\end{equation}
Hence $N_{\xi}\left(t\right)$ can be stochastically bounded from above by a Poisson random variable 
with parameter $tn$, and from below by a Poisson random variable with parameter $tne^{(-2\beta(p+h)}$. 
It therefore follows that, for any $M \geq 0$, 
\begin{equation}
\begin{aligned}
\mathbb{P}\left[N_{\xi}\left(t\right) \geq M\right] 
&\leq \chi_M(nt),\\
\mathbb{P}\left[N_{\xi}\left(t\right) < M\right] 
&\leq 1- \chi_M\big(nt\,e^{-2\beta(p+h)}\big),
\end{aligned}
\label{eq: jump rate bounds}
\end{equation}
where we abbreviate $\chi_M(u) = e^{-u} \sum_{k \geq M} u^k/k!$, $u \in \R$, $M \in \N$.  

\subsubsection{Localisation}
The purpose of the next lemma is to show that the probability of $\{\xi_{t}\}_{t \geq 0}$ straying 
too far from $A_{\mathbf{M}}$ within its first $n^{2}\log n$ jumps is very small. The seemingly 
arbitrary choice of $n^{2}\log n$ is in fact related to the Coupon Collector's problem.  

\begin{lemma}[{\bf Localisation}]
\label{lem:stray}
Let $\xi_{0}\in A_{\mathbf{M}}$,  $T=\inf\left\{ t\geq0\colon N_{\xi}\left(t\right)\geq n^{2}\log n\right\}$, 
and let $C_{1}\in\mathbb{R}$ be a sufficiently large constant, possibly dependent on $p$ and $h$ 
(but not on $n$). Then 
\begin{equation}
\label{eq:claimloc}
\mathbb{P}_{\xi_{0}}\left[\xi_{t}\in A_{\mathbf{M}+C_{1}n^{5/6}}
\mbox{ for some } 0\leq t\leq T\right] \leq e^{-n^{2/3}}.
\end{equation}
\end{lemma}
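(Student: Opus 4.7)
The plan is to exploit the restoring drift of the volume process near $A_{\mathbf{M}}$: excursions above $A_{\mathbf{M}}$ are typically of height far smaller than $C_1 n^{5/6}$, and there are at most $n^2 \log n$ such excursions during the first $n^2 \log n$ jumps. Throughout, condition on the high $\mathbb{P}_{\ER_n(p)}$-probability event on which the rate bounds of Lemma~\ref{lem: Rate bounds} hold.

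\emph{Step 1 (Drift at level $\mathbf{M}+j$).} By Lemma~\ref{lem: Rate bounds}, for any $\sigma \in A_{\mathbf{M}+j}$ with $1 \leq j \leq C_1 n^{5/6}$, the ratio of total forward rate to total backward rate out of $\sigma$ equals $\exp(J_{p,\beta,h}(\mathbf{m}+2j/n))\,(1+O(n^{-1/3}))$, via the identity $\tfrac{1-a}{1+a}\,e^{2\beta(pa+h)} = e^{J_{p,\beta,h}(a)}$ with $a=\mathbf{m}+2j/n$. Since $J_{p,\beta,h}(\mathbf{m})=0$ and $\mathbf{m}\in(-1,-\sqrt{1-1/(\beta p)})$ lies in the region where $J_{p,\beta,h}$ is strictly decreasing (Section~\ref{ss:metaperturbedCW}), Taylor expansion gives $J_{p,\beta,h}(\mathbf{m}+2j/n) = -4cj/n + O((j/n)^2)$ with $c := 1/(1-\mathbf{m}^2) - \beta p > 0$.

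\emph{Step 2 (Single-excursion bound via gambler's ruin).} It follows that the upward jump probability $p(\sigma)$ at any $\sigma \in A_{\mathbf{M}+j}$ is bounded above by $p^+_j := 1/(1+e^{4cj/n - O(n^{-1/3})})$. Using the sandwich construction of Lemma~\ref{lem:ratio probab}, the excursions of $\{|\xi_t|\}_{t\geq0}$ above $\mathbf{M}$ are dominated by those of the birth-death chain $\{Y^+_i\}$ on $\{0,\ldots,n\}$ with up-probability $p^+_j$ at level $\mathbf{M}+j$. The standard gambler's-ruin formula gives, with $L=C_1 n^{5/6}$,
\[
\mathbb{P}\big[Y^+ \text{ hits } \mathbf{M}+L \text{ before } \mathbf{M} \mid Y^+_0 = \mathbf{M}+1\big]
\;=\;\Bigg[\sum_{k=0}^{L-1}\prod_{j=1}^{k}\frac{1-p^+_j}{p^+_j}\Bigg]^{-1}
\;\leq\;\exp\!\big(-c'\,C_1^2\,n^{2/3}\big)
\]
for some $c' \in (0,2c)$ and all large $n$. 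Indeed, the inner double sum exceeds its $k = L-1$ term, which is $\exp(2c L^2/n - O(L n^{-1/3})) = \exp(2c C_1^2 n^{2/3} - O(n^{1/2}))$, so the accumulated $O(n^{1/2})$ error is asymptotically dominated by the main term $2c C_1^2 n^{2/3}$.

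\emph{Step 3 (Union bound over excursions).} Each excursion above $A_{\mathbf{M}}$ consumes at least one jump, so $\{\xi_t\}_{t\geq 0}$ initiates at most $N_\xi(T) \leq n^2\log n$ such excursions in $[0,T]$. By the strong Markov property applied at the start $\tau'_i$ of the $i$-th excursion (conditional on its existence), the event ``the $i$-th excursion reaches $A_{\mathbf{M}+L}$'' has probability at most the Step~2 bound, uniformly in $\xi_{\tau'_i-}\in A_{\mathbf{M}}$. Hence
\[
\mathbb{P}_{\xi_0}\big[\xi_t\in A_{\mathbf{M}+C_1 n^{5/6}}\text{ for some }0\leq t\leq T\big]
\;\leq\;\mathbb{E}[N_\xi(T)]\cdot\exp(-c'\,C_1^2\,n^{2/3})
\;\leq\;n^2\log n\cdot\exp(-c'\,C_1^2\,n^{2/3}),
\]
which is $\leq e^{-n^{2/3}}$ once $C_1 > \sqrt{2/c'}$ and $n$ is large, proving \eqref{eq:claimloc}.

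\emph{Main obstacle.} The delicate point is the choice of scale $L=C_1 n^{5/6}$: it must be large enough that the gambler's-ruin exponent $L^2/n = C_1^2 n^{2/3}$ beats both the union-bound factor $n^2\log n$ and, more subtly, the cumulative $O(n^{-1/3})$ rate-error that picks up an extra factor of $L$ after summing. These requirements together force the exponent $5/6$ and explain why the identical argument would fail at any significantly smaller length scale.
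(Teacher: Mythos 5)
Your proposal is correct and arrives at the same conclusion by a route that is structurally parallel but technically more direct than the paper's. Both proofs have the same skeleton: observe that at most $n^{2}\log n$ excursions from $A_{\mathbf{M}}$ can be initiated during the first $n^{2}\log n$ jumps, bound the per-excursion probability of reaching $A_{\mathbf{M}+C_{1}n^{5/6}}$ by roughly $\exp(-\mathrm{const}\cdot C_{1}^{2}n^{2/3})$ using the restoring drift encoded in $J'(\mathbf{m})<0$, and union-bound. Where you differ is in how the per-excursion estimate is produced. The paper routes this through Lemma~\ref{lem: Hitting order} and the closed-form hitting-probability bound \eqref{eq:AkP} for the perturbed Curie--Weiss magnetization process, which involves $\Psi$, binomial coefficients, and a Stirling expansion of the logarithm $\tfrac{n}{2}\log F$ (its Steps~2--3). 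You instead extract the dominating birth--death chain from Lemma~\ref{lem:ratio probab}, write its level-$j$ down/up ratio as $e^{-J_{p,\beta,h}(\mathbf{m}+2j/n)}\,(1+O(n^{-1/3}))$ via the identity $e^{J_{p,\beta,h}(a)}=\tfrac{1-a}{1+a}e^{2\beta(pa+h)}$, Taylor-expand $J$ around $\mathbf{m}$ using $J(\mathbf{m})=0$, $J'(\mathbf{m})=-2c<0$, and feed the result into the gambler's-ruin formula. Since the gambler's-ruin product is exactly the detailed-balance product underlying \eqref{eq:AkP}, the two computations are equivalent, but yours is more elementary (no Stirling, no $\Psi$, no binomial manipulations) and arguably cleaner as a matter of citation: Lemma~\ref{lem: Hitting order} is stated only for the level pair $(\mathbf{M},\mathbf{S})$, whereas the sandwich chains $X^{\pm}$ of Lemma~\ref{lem:ratio probab} supply the required domination for arbitrary level pairs. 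You also drop the paper's preliminary high-probability bound $T\leq n^{2}\log n$ (Step~1 of their proof), correctly noting that by definition $N_{\xi}(T)=\lceil n^{2}\log n\rceil$ regardless of wall-clock time, so the excursion count is bounded deterministically. Two small nits: the constant $c=\tfrac{1}{1-\mathbf{m}^{2}}-\beta p$ depends on $\beta$ as well as on $p,h$ (so does the paper's $\delta_{1}$, so the lemma's phrasing ``dependent on $p$ and $h$'' is imprecise in both proofs), and your derivation tacitly uses $\vartheta_{k}>0$ for all $k$ in range, which indeed holds for $n$ large because $\mathbf{m}<-h/p$ and the excursion height $C_{1}n^{5/6}$ shifts the magnetization by only $O(n^{-1/6})$; it would be worth one line to say so.
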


\begin{proof}
The idea of the proof is to show that $\{\xi_{t}\}_{t \geq 0}$ returns many times to $A_{\mathbf{M}}$ 
before reaching $A_{\mathbf{M}+C_{1}n^{5/6}}$. The proof comes in 3 Steps.

\medskip\noindent
{\bf 1.}
We begin by showing that $T\leq n^{2}\log n$ with probability $\geq 1- e^{-n^{3}}$ (in other words, it takes less than $n^{2}\log (n)$ time to make $n^{2}\log (n)$ steps). Indeed, by the 
second line of (\ref{eq: jump rate bounds}), 
\begin{eqnarray} \label{eq: N jump ubound}
&& \mathbb{P}\left[T>n^{2}\log n\right]\\[0.2cm] \nonumber
&&\qquad = \mathbb{P}\left[N_{\xi}\left(n^{2}\log n\right)<n^{2}\log n\right] \\
&& \qquad \leq 1-\chi_{n^{2} \log n}\Big((n^{3} \log n)\,e^{-2\beta\left(p+h\right)}\Big) \nonumber \\
&& \qquad \leq \sum_{k=0}^{n^{2}\log n}\exp\left(-(n^{3}\log n)\,
e^{-2\beta\left(p+h\right)}+k\log\left(\tfrac{e\,n^{3}\log n}{k}\right)\right)\nonumber \\
&& \qquad \leq (n^{2}\log n)\exp\left(-(n^{3}\log n)\,
e^{-2\beta\left(p+h\right)}+n^{5/2}\right)\nonumber \nonumber \\
&& \qquad\leq e^{-n^{3}},\nonumber
\end{eqnarray}
where for the second inequality we use that $k! \geq (\frac{k}{e})^{k}$, $k \in \N$, and for the third 
inequality that, for $n$ sufficiently large,
\begin{equation}
k\log\left(\tfrac{en^{3}\log n}{k}\right) \leq (n^{2}\log n) \log\left(e\,n^{3}\log n\right)\leq n^{5/2}.
\end{equation}
Next, observe that 
\begin{eqnarray}
&&\mathbb{P}_{\xi_{0}}\left[\xi_{t}\in A_{\mathbf{M}+C_{1}n^{5/6}}
\mbox{ for some }0\leq t\leq T\right]\\ \nonumber
&& = \mathbb{P}_{\xi_{0}}\left[\xi_{t}\in A_{\mathbf{M}+C_{1}n^{5/6}}
\mbox{ for some }0\leq t\leq T,\, T\leq n^{2}\log n \right]\\ \nonumber
&& \quad +\,  \mathbb{P}_{\xi_{0}}\left[\xi_{t}\in A_{\mathbf{M}+C_{1}n^{5/6}}
\mbox{ for some }0\leq t\leq T,\, T>n^{2}\log n \right]\\ \nonumber
&& \leq (n^{2}\log n) \max_{\sigma\in A_{\mathbf{M}}}
\mathbb{P}_{\sigma}\left[\tau_{A_{\mathbf{M}+C_{1}n^{5/6}}}<\tau_{A_\mathbf{M}}\right]
+ e^{-n^{3}}.\nonumber
\end{eqnarray}
Here, the inequality follows from \eqref{eq: N jump ubound} and the observation that the event 
$\xi_{t}\in A_{\mathbf{M}+C_{1}n^{5/6}} \mbox{ for some } 0\leq t\leq T$ with $T \leq n^{2}\log n$
is contained in the event that $A_{\mathbf{M}+C_{1}n^{5/6}}$ is visited before the $(n^{2}\log n)$-th 
return to $A_{\mathbf{M}}$. From Lemma~\ref{lem: Hitting order} and \eqref{eq:AkP} it follows that
\begin{equation}
\label{eq:Pest1}
\begin{aligned}
\max_{\sigma\in A_{\mathbf{M}}}\mathbb{P}_{\sigma}
\left[\tau_{A_{\mathbf{m}+C_{1}n^{5/6}}}<\tau_{A_\mathbf{M}}\right]\leq\tfrac{\left(1-a\right)}{2}
e^{2\beta\left(p\left|a\right|+h^{l}+\tfrac{p}{n}\right)}
e^{-\beta n[\Psi\left(b\right)-\Psi\left(a\right)]}
{n \choose \frac{n}{2}\left(1+b\right)}{n \choose \frac{n}{2}\left(1+a\right)}^{-1}
\end{aligned}
\end{equation}
with $a=\mathbf{m}/n$ and $b=\left(\mathbf{m}+C_{1}n^{5/6}\right)/n$. 

\medskip\noindent
{\bf 2.}
Our assumption on the parameters $\beta$, $p$ and $h$ is that $2\beta(p(a+\frac{2}{n})+h)
+\log(\frac{1-a}{1+a+\frac{2}{n}})$ is negative in two disjoint regions. Recall that the first region 
lies between $a_{1}=\frac{2\mathbf{M}}{n}-1$ and $a_{2}=\frac{2\mathbf{T}}{n}-1$. This, in particular, implies that the derivative of $2\beta(p(a+\frac{2}{n})+h)
+\log(\frac{1-a}{1+a+\frac{2}{n}})$ at $a=a_{1}$ is
\begin{equation}
2\beta p-\tfrac{1}{1-a_{1}}-\tfrac{1}{1+a_{1}}=-\delta_{1}<0
\label{eq:delta1}
\end{equation}
for some $\delta_{1}>0$. Recall that $\Psi(a)=-\frac{p}{2}a^{2}-ha$, so that $\Psi(b)-\Psi(a)
= (a-b)(\tfrac{p}{2}(a+b)+h)$, which gives
\begin{eqnarray}
\label{diff1}
&&e^{-\beta n\left[\Psi\left(b\right)-\Psi\left(a\right)\right]}
{n \choose \frac{n}{2}\left(1+b\right)}{n \choose \frac{n}{2}\left(1+a\right)}^{-1}\\ \nonumber
&&=\exp\left(\beta n\left(b-a\right)\left(pa+h\right)+\beta n\left(b-a\right)^{2}\tfrac{p}{2}
+\tfrac{n}{2}\log\left(\tfrac{\left(1+a\right)^{\left(1+a\right)}\left(1-a\right)^{\left(1-a\right)}}
{\left(1+b\right)^{\left(1+b\right)}\left(1-b\right)^{\left(1-b\right)}}\right) + O(\log n)\right),
\end{eqnarray}
where we use Stirling's approximation in the last line. Since $b=a+C_{1}n^{-1/6}$, we have
\begin{eqnarray}
\label{diff2}
\text{r.h.s. }\eqref{diff1} = \exp\left(\beta C_{1}n^{5/6}\left(pa+h\right)+\tfrac{p}{2}\beta C_{1}^{2}n^{2/3}
+\tfrac{n}{2}\log F\right)
\end{eqnarray}
with 
\begin{equation}
F=\left(1-U_n(a)\right)^{1+a} \left(1+V_n(a)\right)^{1-a}\left(W_n(a)\right)^{C_{1}n^{-1/6}},
\end{equation}
where
\begin{equation}
U_n(a) = \tfrac{C_{1}n^{-1/6}}{1+a+C_{1}n^{-1/6}}, \qquad
V_n(a) = \tfrac{C_{1}n^{-1/6}}{1-a-C_{1}n^{-1/6}}.
\end{equation}
From the Taylor series expansion of $\log\left(1+x\right)$ for $0\leq \left|x\right|<1$, we 
obtain
\begin{equation}
\begin{aligned}
&\tfrac{n}{2}\left(1+a\right)\log\left(1-U_n(a)\right)
\leq \tfrac{n}{2}\left(1+a\right)\left(-U_n(a)-\tfrac{1}{2}\left(U_n(a)\right)^{2}\right),\\
&\tfrac{n}{2}\left(1-a\right)\log\left(1+V_n(a)\right)
\leq \tfrac{n}{2}\left(1-a\right)\left(V_n(a)
-\tfrac{1}{2}\left(V_n(a)\right)^{2}+O(n^{-1/2})\right),
\end{aligned}
\end{equation}
and 
\begin{eqnarray}
\tfrac12 C_{1}n^{5/6}\log\left(\frac{U_n(a)}{V_n(a)}\right)
&=& \tfrac12 C_{1}n^{5/6}\log\left(\tfrac{1-a}{1+a}
\tfrac{1-a-C_{1}n^{-1/6}}{1-a}\tfrac{1+a}{1+a+C_{1}n^{-1/6}}\right)\\ \nonumber
&\leq& \tfrac12 C_{1}n^{5/6}\left(\log\left(\tfrac{1-a}{1+a}\right)
-\tfrac{C_{1}n^{-1/6}}{1-a}-U_n(a)-O(n^{-2/3})\right).
\end{eqnarray}
By the definition of $\mathbf{m}$, we have
\begin{equation}
C_{1}n^{5/6}\left(\beta\left(pa+h\right)+\log\left(\tfrac{1-a}{1+a}\right)\right)\leq0.
\end{equation}
Hence we get
\begin{equation}
\label{diff2*}
\begin{aligned}
&\beta C_{1}n^{5/6}\left(pa+h\right)+\tfrac{p}{2}\beta n^{2/3}C_{1}^{2}+\tfrac{n}{2}\log F\\ 
&\leq \tfrac{p}{2}\beta n^{2/3}C_{1}^{2}-\tfrac{\tfrac12 C_{1}\left(1+a\right)n^{5/6}}
{1+a+C_{1}n^{-1/6}}+\tfrac{\tfrac12 C_{1}\left(1-a\right)n^{5/6}}{1-a-C_{1}n^{-1/6}}
-\tfrac12 C_{1}^{2}n^{2/3}\,G
\end{aligned}
\end{equation}
with 
\begin{equation}
G=\tfrac{1}{1-a}+\tfrac{1}{1+a+C_{1}n^{-1/6}}+\left(\tfrac{1-a}{2}\right)
\left(\tfrac{1}{1-a-C_{1}n^{-1/6}}\right)^{2}+\left(\tfrac{1+a}{2}\right)\left(\tfrac{1}{1+a+C_{1}n^{-1/6}}\right)^{2}.
\end{equation}
Hence
\begin{eqnarray}
\text{r.h.s. }\eqref{diff2*}
&\leq& \tfrac{p}{2}\beta n^{2/3}C_{1}^{2}+\tfrac12 C_{1}^{2} n^{2/3}\left(\tfrac{1}{1-a-cn^{-1/6}}
+\tfrac{1}{1+a+C_{1}n^{-1/6}}\right)-\tfrac12 C_{1}^{2}n^{1/6}\,G\\ \nonumber
&\leq& n^{2/3} \tfrac12 C_{1}^{2}\left(p\beta-\tfrac{1}{2}\tfrac{1}{1-a-C_{1}n^{-1/6}}
-\tfrac{1}{2}\tfrac{1}{1+a+C_{1}n^{-1/6}}+O(n^{-1/6})\right)\\ \nonumber
&=& n^{2/3} \tfrac12  C_{1}^{2}\left(p\beta-\tfrac{1}{2}\tfrac{1}{1-a}-\tfrac{1}{2}\tfrac{1}{1+a}
+O(n^{-1/3})\right) \leq -\tfrac14 C_{1}^{2}\delta_{1}n^{2/3}.
\end{eqnarray}

\medskip\noindent
{\bf 3.}
Combine \eqref{eq:Pest1}, \eqref{diff1} and \eqref{diff2*}, and pick $C_{1}$ large enough, to get the claim in
\eqref{eq:claimloc}.
\end{proof}


\subsubsection{Update times}

The following two lemmas give useful bounds for the coupling scheme. The symbol $\simeq$ stands
for equality in distribution.

\begin{lemma}[{\bf Total variation between exponential distributions}]
\label{lem:tv bound exponentials}
$\mbox{}$\\
Let $X\simeq\mathrm{Exp}\left(\lambda\right)$ and $Y\simeq\mathrm{Exp}\left(\lambda+\delta\right)$.
Then the total variation distance between the distributions of $X$ and $Y$ is bounded by 
\begin{equation}
d_{TV}\left(X,Y\right)\leq\frac{2\delta}{\lambda+\delta}.
\end{equation}
\end{lemma}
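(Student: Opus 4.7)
The plan is to bound the total variation distance by direct computation of the $L^1$ difference of densities. Write $f_X(t)=\lambda e^{-\lambda t}$ and $f_Y(t)=(\lambda+\delta)e^{-(\lambda+\delta)t}$ for $t\geq 0$, and note that $f_Y(0)-f_X(0)=\delta>0$, while $f_X(t)>f_Y(t)$ for all sufficiently large $t$ because $f_Y$ decays faster. Setting $f_X(t)=f_Y(t)$ gives a unique crossover point
\begin{equation*}
t^{\star}=\tfrac{1}{\delta}\log\!\left(\tfrac{\lambda+\delta}{\lambda}\right),
\end{equation*}
with $f_Y>f_X$ on $[0,t^{\star})$ and $f_X>f_Y$ on $(t^{\star},\infty)$.

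Next I would split the integral $\int_{0}^{\infty}|f_X-f_Y|\,dt$ at $t^{\star}$. Each of the two pieces can be evaluated exactly using $\int_{0}^{t^{\star}}f_X\,dt=1-e^{-\lambda t^{\star}}$, $\int_{0}^{t^{\star}}f_Y\,dt=1-e^{-(\lambda+\delta)t^{\star}}$, and the complementary tail integrals. A short cancellation shows that the two pieces are equal, so that
\begin{equation*}
\int_{0}^{\infty}|f_X(t)-f_Y(t)|\,dt
= 2\bigl(e^{-\lambda t^{\star}}-e^{-(\lambda+\delta)t^{\star}}\bigr).
\end{equation*}
Substituting the value of $t^{\star}$ yields
\begin{equation*}
e^{-\lambda t^{\star}}-e^{-(\lambda+\delta)t^{\star}}
=\left(\tfrac{\lambda}{\lambda+\delta}\right)^{\!\lambda/\delta}\cdot\tfrac{\delta}{\lambda+\delta}
\;\leq\;\tfrac{\delta}{\lambda+\delta},
\end{equation*}
since the factor $\bigl(\tfrac{\lambda}{\lambda+\delta}\bigr)^{\lambda/\delta}\in(0,1)$.

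Finally, depending on whether $d_{TV}$ denotes the $L^1$ distance of densities or half of it, the claim $d_{TV}(X,Y)\leq \frac{2\delta}{\lambda+\delta}$ follows with room to spare. The calculation is entirely elementary, so I do not anticipate any real obstacle: the only mildly careful step is locating the crossover $t^{\star}$ and verifying that the positive and negative parts of $f_X-f_Y$ contribute equally, which is automatic since $\int f_X=\int f_Y=1$.
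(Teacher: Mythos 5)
Your computation is correct: the crossover point $t^{\star}=\tfrac{1}{\delta}\log\tfrac{\lambda+\delta}{\lambda}$ is right, the positive and negative parts of $f_X-f_Y$ contribute equally because both densities integrate to $1$, and the exact evaluation gives $\int_0^\infty|f_X-f_Y|\,dt = 2\bigl(\tfrac{\lambda}{\lambda+\delta}\bigr)^{\lambda/\delta}\tfrac{\delta}{\lambda+\delta}\leq\tfrac{2\delta}{\lambda+\delta}$, which proves the claim under either normalization of $d_{TV}$ (and, under the standard convention with the factor $\tfrac12$, in fact yields the sharper bound $\tfrac{\delta}{\lambda+\delta}$). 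The paper's own proof consists of the single word \emph{Elementary}, so there is nothing to compare against; your explicit derivation is the natural one and fills the gap cleanly.
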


\begin{proof}
Elementary.
\end{proof}

\begin{lemma}[{\bf Update times}] 
\label{lem:update jumps}
Let $T_{update}^{\xi}$ be the first time $\{\xi_{t}\} _{t\geq0}$ has experienced an update 
at every site:
\begin{equation}
T_{update}^{\xi}=\inf\left\{ t\geq0\colon\,\forall\,v\in V\,\exists\,0 \leq s\leq t\colon\,
\xi_{s}\left(v\right)=-\xi_{0}\left(v\right)\right\} .
\end{equation}
Then, for any $y>0$, 
\begin{equation}
\mathbb{P}\left[T_{update}^{\xi}\geq y\right]
\leq\frac{\exp\left(-\lambda y + \log n\right)}{1-\exp\left(-\lambda y\right)},
\qquad \lambda=e^{-\beta\left(2p+h\right)}.
\end{equation}
\end{lemma}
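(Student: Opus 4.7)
The plan is to represent the dynamics graphically using independent Poisson clocks at each vertex, couple the first-flip time at each vertex with an $\mathrm{Exp}(\lambda)$ variable, and then estimate the tail of the maximum of i.i.d.\ exponentials.

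First I would realize $\{\xi_t\}_{t \geq 0}$ from $n$ independent rate-$1$ Poisson processes $\mathcal{P}_v$, $v \in V$, carrying independent $\mathrm{Unif}[0,1]$ marks: at a click of $\mathcal{P}_v$ at time $t$ with mark $U$, flip the spin at $v$ iff $U \leq e^{-\beta[H_n(\xi_{t-}^v) - H_n(\xi_{t-})]_+}$, which reproduces the rates in \eqref{eq:rate r}. Directly from \eqref{eq:Hn} one computes
\begin{equation*}
H_n(\sigma^v) - H_n(\sigma) = 2\sigma(v)\Bigl[\tfrac{1}{n}\sum_{w:\,(v,w)\in E}\sigma(w) + h\Bigr],
\end{equation*}
whose positive part is bounded uniformly by the constant $2p+h$ on an event of $\mathbb{P}_{\mathrm{ER}_n(p)}$-probability tending to $1$, using the bound $\deg(v) \leq (1+o(1))pn$ from Lemma~\ref{lem: degree bound}. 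Therefore every single-spin-flip acceptance probability is bounded below by $\lambda = e^{-\beta(2p+h)}$, uniformly in the current configuration.

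Next I would enlarge the probability space with independent $\mathrm{Bernoulli}(\lambda)$ coins, one attached to each click of each $\mathcal{P}_v$, and rewrite the acceptance rule as: flip automatically on a "coin-heads" click, and on a "coin-tails" click flip with the residual probability that keeps the total acceptance probability equal to the Metropolis value (this is consistent because $\lambda$ lower-bounds the Metropolis rate). By the thinning theorem, the coin-heads subprocess at $v$ is Poisson of rate $\lambda$, and its first point $T'_v$ stochastically dominates the first-flip time $T_v$ at $v$; the variables $\{T'_v\}_{v \in V}$ are i.i.d.\ $\mathrm{Exp}(\lambda)$ thanks to the mutual independence of the clocks and coins. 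Hence $T_{update}^{\xi} \leq \max_{v \in V} T'_v$ stochastically, and so
\begin{equation*}
\mathbb{P}[T_{update}^{\xi} \geq y] \leq 1 - (1 - e^{-\lambda y})^n.
\end{equation*}

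To finish, I would invoke the elementary inequality $1 - (1-x)^n \leq nx/(1-x)$ for $x \in (0,1)$, which follows from $\log(1-x) \geq -x/(1-x)$ (giving $(1-x)^n \geq e^{-nx/(1-x)}$) and $1 - e^{-t} \leq t$, and substitute $x = e^{-\lambda y}$ to obtain the claimed bound. The one step that needs genuine care is the uniform lower bound on the acceptance rate via the degree concentration of Lemma~\ref{lem: degree bound}; the graphical construction with the coin-coupling that decouples the first-flip times into i.i.d.\ exponentials and the final arithmetic estimate on $1-(1-x)^n$ are both routine.
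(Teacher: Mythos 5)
Your argument is correct and follows essentially the same route as the paper: bound the per-site flip rate from below uniformly by a constant $\lambda$, dominate $T^{\xi}_{\mathrm{update}}$ by the maximum of $n$ i.i.d.\ $\mathrm{Exp}(\lambda)$ random variables, and estimate the tail of that maximum. The paper merely asserts the exponential domination, and your Poisson-clock/thinning construction is the standard way to make it rigorous; the final estimate via $1-(1-x)^n \le nx/(1-x)$ is an algebraic rearrangement of the paper's chain $(1-e^{-\lambda y})^n \ge \exp\bigl(-ne^{-\lambda y}/(1-e^{-\lambda y})\bigr) \ge 1 - ne^{-\lambda y}/(1-e^{-\lambda y})$.

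One small inaccuracy, which traces back to the lemma statement itself: from your formula
$H_n(\sigma^v)-H_n(\sigma)=2\sigma(v)\bigl[\tfrac{1}{n}\sum_{w:(v,w)\in E}\sigma(w)+h\bigr]$
the positive part is bounded by $\tfrac{2\deg(v)}{n}+2h\le 2p(1+o(1))+2h$, not by $2p+h$; the worst case is a $+1$ spin whose neighbours are all $+1$, where the interaction and external-field terms both contribute positively, each with a factor $2$. The correct uniform rate floor is therefore $\lambda=e^{-\beta(2p+2h)(1+o(1))}$. Since the lemma is used only qualitatively (the update time is $O(n)$ with high probability), the discrepancy in $\lambda$ is immaterial to the rest of the argument, but your sentence asserting the bound $2p+h$ should be corrected.
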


\begin{proof}
Recall that for $\sigma\in S_{n}$ and $v\in V$, $\sigma^{v}$ denotes the configuration satisfying 
$\sigma^{v}\left(w\right)=\sigma\left(w\right)$ for $w\neq v$, and $\sigma^{v}\left(v\right)
=-\sigma\left(v\right)$. From \eqref{eq:Hn as boundary size} and \eqref{eq:rate r} it
follows that 
\begin{equation}
r\left(\sigma,\sigma^{v}\right)\geq\lambda,
\end{equation}
and so $T_{update}^{\xi}$ is dominated by the maximum of $n$ i.i.d.\ $\mathrm{Exp}\left(\lambda\right)$ 
random variables. Therefore
\begin{eqnarray}
\mathbb{P}\left[T_{update}^{\xi}\leq y\right] 
& \geq & \left(1-e^{-\lambda y}\right)^{n}
= \exp\big(n\log (1-e^{-\lambda y})\big)\\ \nonumber
& \geq & \exp\left(-\tfrac{ne^{-\lambda y}}{1-e^{-\lambda y}}\right)
\geq 1-\tfrac{ne^{-\lambda y}}{1-e^{-\lambda y}},
\end{eqnarray}
which proves the claim.
\end{proof}


\subsubsection{Returns}

The next lemma establishes a lower bound on the number of returns to $A_{\mathbf{M}}$ before 
reaching $A_{\mathbf{S}}$. Let $g_{\xi_{0}}\left(A_{\mathbf{M}},A_{\mathbf{S}}\right)$ denote the
number of jumps that $\left\{ \xi_{t}\right\} _{t\geq0}$ makes into the set $A_{\mathbf{M}}$ before 
reaching $A_{\mathbf{S}}$. More precisely, let $\left\{ s_{i}\right\} _{i\in\mathbb{N}_{0}}$ denote 
the jump times of the process $\left\{ \xi_{t}\right\} _{t\geq0}$, i.e.,
$s_0=0$ and 
\begin{equation}
\label{eq:si}
s_{i}=\inf\left\{ s>s_{i-1}\colon\,\xi_{s}\neq\xi_{s_{i-1}}\right\},
\end{equation}
and define for the process $\xi_{t}$ commencing at $\xi_{0}$
\begin{equation}
g_{\xi_{0}}\left(A_{\mathbf{M}},A_{\mathbf{S}}\right)
=\left|\left\{ i\in\mathbb{N}_0\colon\,\xi_{s_{i}}\in A_{\mathbf{M}},
\xi_{s} \notin A_{\mathbf{S}}\,\,\forall\, s \leq s_{i}\right\} \right|.
\end{equation}

\begin{lemma}[{\bf Bound on number of returns}]
\label{lem:ret}
For any $\xi_{0}\in A_{\mathbf{M}}$ and any $\delta > 0$, 
\begin{equation}
\mathbb{P}_{\xi_{0}}\big[g_{\xi_{0}}\left(A_{\mathbf{M}},A_{\mathbf{S}}\right) 
< e^{([R_{p}(t) - R_{p}(m)] - \delta)n}\big] 
\leq e^{-\delta n + Cn^{2/3}}
\label{eq:numvisits}
\end{equation}
for some constant $C$ that does not depend on $n$.
\end{lemma}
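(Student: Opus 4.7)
The strategy is standard for metastability via path-wise arguments: each sojourn starting from $A_{\mathbf{M}}$ has a tiny probability $p_*$ of reaching $A_{\mathbf{S}}$ before returning to $A_{\mathbf{M}}$, so by the strong Markov property the number of returns until success stochastically dominates a geometric-like random variable with mean $1/p_*$. Quantitatively, I will show that $p_* \leq e^{-\beta n [R_p(\mathbf{t}) - R_p(\mathbf{m})] + O(n^{2/3})}$ and then apply a union bound over the first $N = \lceil e^{([R_p(\mathbf{t}) - R_p(\mathbf{m})] - \delta)n}\rceil$ excursions.

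\textbf{Step 1 (per-excursion bound).} Set $p_* := \max_{\xi \in A_{\mathbf{M}}} \mathbb{P}_\xi[\tau_{A_{\mathbf{S}}} < \tau_{A_{\mathbf{M}}}]$. First apply Lemma~\ref{lem:ratio probab} with $k = \mathbf{M}$ and $m = \mathbf{S}$: the maximum and the minimum of $\xi \mapsto \mathbb{P}_\xi[\tau_{A_{\mathbf{S}}} < \tau_{A_{\mathbf{M}}}]$ over $A_{\mathbf{M}}$ differ by at most $e^{K(\mathbf{S}-\mathbf{M})n^{-1/3}} = e^{O(n^{2/3})}$, hence
\[
p_* \leq e^{O(n^{2/3})}\,\mathbb{P}_{\mu_{A_{\mathbf{M}}}}\!\left[\tau_{A_{\mathbf{S}}} < \tau_{A_{\mathbf{M}}}\right].
\]
Following Step~3 of the proof of Lemma~\ref{lem: Hitting order}, the capacity identity \eqref{eq:cap identity} yields
\[
\mathbb{P}_{\mu_{A_{\mathbf{M}}}}\!\left[\tau_{A_{\mathbf{S}}} < \tau_{A_{\mathbf{M}}}\right]
= [1+o(1)]\,\frac{\mathrm{cap}(A_{\mathbf{M}}, A_{\mathbf{S}})}{n\,\mu(A_{\mathbf{M}})}.
\]
Now insert the capacity upper bound of Lemma~\ref{lem: cap G bound} (attained at $k_m = \mathbf{T}$) in the numerator, and a lower bound on $\mu(A_{\mathbf{M}}) = Z^{-1}\sum_{\sigma \in A_{\mathbf{M}}}e^{-\beta H_n(\sigma)}$ of the form \eqref{eq:exp bounds 2} in the denominator. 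The combinatorial factors $\binom{n}{\mathbf{T}}/\binom{n}{\mathbf{M}}$ and the energetic factors $e^{-\beta[2\mathbf{T}\theta_{\mathbf{T}}-2\mathbf{M}\theta_{\mathbf{M}}]}$ can be rewritten via Stirling's formula as $e^{n[I(\mathbf{m})-I(\mathbf{t})]}$ and $e^{-\beta n[\Psi(\mathbf{t})-\Psi(\mathbf{m})]}$, respectively, where $\Psi(a) = -\tfrac{1}{2}pa^{2} - ha$. Together with the definition \eqref{eq:potential} of $R_p$, the product collapses to $e^{-\beta n[R_p(\mathbf{t}) - R_p(\mathbf{m})]}$, and the polynomial prefactor $O(\varrho(n)n^{11/6})$ from Lemma~\ref{lem: cap G bound} as well as the $e^{O(\sqrt{\log n})}$ terms from \eqref{eq:exp bounds 2} are absorbed into the $e^{O(n^{2/3})}$ error, giving
\[
p_* \leq e^{-\beta n[R_p(\mathbf{t}) - R_p(\mathbf{m})] + O(n^{2/3})}.
\]

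\textbf{Step 2 (iteration).} Let $T_0 = 0$ and let $T_i$ denote the time of the $i$-th return of $\{\xi_t\}$ to $A_{\mathbf{M}}$. The event $\{g_{\xi_0}(A_{\mathbf{M}}, A_{\mathbf{S}}) < N\}$ is contained in the event that $A_{\mathbf{S}}$ is hit during one of the first $N$ excursions from $A_{\mathbf{M}}$. By the strong Markov property applied successively at each $T_i$, together with the definition of $p_*$,
\[
\mathbb{P}_{\xi_0}\!\left[g_{\xi_0}(A_{\mathbf{M}}, A_{\mathbf{S}}) < N\right] \leq N p_*.
\]
Setting $N = \lceil e^{([R_p(\mathbf{t}) - R_p(\mathbf{m})] - \delta)n}\rceil$ and using that $\beta > 1/p > 1$ in the metastable regime (so that $[R_p(\mathbf{t}) - R_p(\mathbf{m})] < \beta[R_p(\mathbf{t}) - R_p(\mathbf{m})]$), we obtain $Np_* \leq e^{-\delta n + Cn^{2/3}}$ for a constant $C = C(\beta, p, h)$, which is \eqref{eq:numvisits}.

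\textbf{Main obstacle.} The only delicate point is bookkeeping in Step~1: matching the capacity numerator (whose exponential scale is governed by the \emph{maximiser} $k_m = \mathbf{T}$) with the $\mu(A_{\mathbf{M}})$ denominator (governed by $\mathbf{M}$) via Stirling, so that the resulting exponent is exactly $-\beta n[R_p(\mathbf{t}) - R_p(\mathbf{m})]$ modulo the announced $O(n^{2/3})$ error. Everything else is either a direct appeal to earlier results or a straightforward union bound.
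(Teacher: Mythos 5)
Your proposal is correct and takes essentially the same approach as the paper: bound the per-excursion success probability $p_* = \max_{\xi\in A_{\mathbf{M}}}\mathbb{P}_\xi[\tau_{A_{\mathbf{S}}}<\tau_{A_{\mathbf{M}}}]$ via the rank-ordering/capacity machinery of Section~4, and then apply a geometric/union bound over the first $N$ excursions using the strong Markov property. Your treatment is in fact a bit more careful than the paper's sketch in two respects: you explicitly invoke Lemma~\ref{lem:ratio probab} to pass from the $\mu_{A_{\mathbf{M}}}$-averaged hitting probability (which is what Lemma~\ref{lem: Hitting order} controls) to the worst-case starting point, and you keep the correct $\beta$ factor in the exponent of $p_*$ and then absorb it using $\beta p>1$, whereas the paper's proof states the per-excursion bound without the $\beta$; both observations are compatible with the (weaker) statement as written.
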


\begin{proof}
Let $Y$ be a geometric random variable with probability of success given by $e^{-(R_{p}(t) - R_{p}(m))n
+Cn^{2/3}}$. Then, by Lemma~\ref{lem: Hitting order}, every time the process $\{\xi_{t}\}_{t \geq 0}$ 
starts over from $A_{M}$, it has a probability less than $\mathbb{P}^{u}_{\xi}[\tau_{S^{u}} < \tau_{M^{u}}]$ 
of making it to $A_{S}$. Using the bounds from that lemma, it follows that $Y$ is stochastically dominated 
by $g_{\xi_{0}}\left(A_{\mathbf{M}},A_{\mathbf{T}}\right)$. Hence
\begin{equation}
\mathbb{P}\left[Y\leq e^{([R_{p}(t) - R_{p}(m)]-\delta)n}\right] 
\leq e^{([R_{p}(t) - R_{p}(m)]-\delta)n}\,e^{-[R_{p}(t) - R_{p}(m)]n+Cn^{2/3}}\\
\leq e^{-\delta n+Cn^{2/3}}.
\end{equation}
\end{proof}


\subsection{Uniform hitting time}
\label{ss:proofuht}

In this section we show that if Theorem~\ref{thm:metER} holds for \emph{some} initial configuration in 
$A_{\mathbf{M}}$, then it holds for \emph{all} initial configurations in $A_{\mathbf{M}}$. The proof of 
this claim, which will be needed in Section~\ref{s:proofmettheorem}, relies on a \emph{coupling construction} 
in which the two processes starting anywhere in $A_{\mathbf{M}}$ meet with a sufficiently high probability 
long before either one reaches $A_{\mathbf{S}}$. Details of the coupling construction are given in 
Section~\ref{sec:couplingscheme}.

The idea of the proof is that for $\{\xi_{t}\}_{t \geq 0}$ starting in $A_{\mathbf{M}}$ 
the starting configuration is irrelevant for the metastable crossover time because the latter is very large. 
We will verify this by showing that ``local mixing'' takes place long before the crossover to $A_{\mathbf{S}}$ 
occurs. More precisely, we will show that if $\xi_{0},\tilde{\xi}_{0}$ are any two initial configurations in 
$A_{\mathbf{M}}$, then there is a coupling such that the trajectory $t \mapsto \xi_{t}$ intersects the 
trajectory $t \mapsto \tilde{\xi}_{t}$ well before either strays too far from $A_{\mathbf{M}}$. The coupling 
is such that there is a small but sufficiently large probability that $\xi_{t}$ and $\tilde{\xi}_{t}$ are identical 
once every spin at every vertex has had a chance to update, which occurs after a time $t$ that is not too 
large. It follows that after a large number of trials with high probability the two trajectories intersect.

\begin{proof}
Consider two copies of the process, $\{\xi_{t}\} _{t\geq0}$ and $\{\tilde{\xi}_{t}\} _{t\geq0}$. Let $\delta > 0$
and $T_{0}=e^{([R_{p,\beta,h}(\mathbf{t})-R_{p,\beta,h}(\mathbf{m})]-\delta)n}$. In order to simplify the 
notation and differentiate between the two processes, we abbreviate the crossover time $\tau_{A_{\mathbf{S}}}$ 
by 
\begin{equation}
\tau^{\xi}=\inf\left\{ t\geq s\colon\xi_{t}\in A_{\mathbf{S}}\right\},
\end{equation}
with a similar definition for $\tau^{\tilde{\xi}}$. We will show that $\mathbb{E}_{\xi_{0}}[\tau^{\xi}] 
\leq [1+o_{n}(1)]\mathbb{E}_{\tilde{\xi}_{0}} [\tau^{\tilde{\xi}} ]$, with the proof for the inequality in the other 
direction being identical. 

\medskip\noindent
{\bf 1.}
We start with the following observation. From Corollary~\ref{cor:expratio}, we immediately get that 
\begin{equation}
\mathbb{E}_{\xi_{0}}\big[\tau^{\xi}\big] / \mathbb{E}_{\xi_{0}}\big[\tau^{\tilde{\xi}}\big]
= e^{O(n^{2/3})}.
\label{eq:1expvaleq}
\end{equation}
Furthermore, the relation in \eqref{eq:1expvaleq} together with the initial steps in the proof of 
Theorem~\ref{thm:metER} implies that, for any initial configuration $\xi_{0}$,
\begin{equation}
\mathbb{E}_{\xi_{0}}\left[\tau^{\xi}\right]=e^{n[R_{p,\beta,h}(\mathbf{t})-R_{p,\beta,h}(\mathbf{m})] 
+ O(n^{2/3})}.
\label{eq:2expvaleq}
\end{equation}
\emph{Note:} Step 2 in Section ~\ref{s:proofmettheorem} shows that if $\xi_{0}$ is distributed according to 
the law $\mu_{A_{\mathbf{M}}}$, then 
\begin{equation}
\mathbb{E}_{\xi_{0}}[\tau^{\xi}] = e^{n[R_{p,\beta,h}(\mathbf{t})
-R_{p,\beta,h}(\mathbf{m})] + O(\log n)}.
\end{equation} 
(Recall from Section \ref{ss:rankorder} that $\mu_{A_{\mathbf{M}}}$ is the equilibrium distribution $\mu$ 
conditioned on the set $A_{\mathbf{M}}$.) Let $\{\xi_{t},\tilde{\xi}_{t}\} _{t\geq0}$ be the coupling of the 
two processes described in Section~\ref{sec:couplingscheme}, and note that 
\begin{equation}
\mathbb{E}_{\xi_{0}}\left[\tau_{A_{\mathbf{S}}}\right] 
= \hat{\mathbb{E}}_{\left(\xi_{0},\tilde{\xi}_{0}\right)}[\tau^{\xi}]
= \hat{\mathbb{E}}_{\left(\xi_{0},\tilde{\xi}_{0}\right)}\left[\tau^{\xi}
\mathbbm{1}_{\left\{ \xi_{T_{0}}=\tilde{\xi}_{T_{0}}\right\} }\right]
+\hat{\mathbb{E}}_{\left(\xi_{0},\tilde{\xi}_{0}\right)}\left[\tau^{\xi}
\mathbbm{1}_{\left\{ \xi_{T_{0}}\neq\tilde{\xi}_{T_{0}}\right\} }\right],
\label{eq:exp-couplke-time decomp}
\end{equation}
where $\hat{\mathbb{E}}$ denotes expectation with respect to the law of the joint process. The 
above inequality splits the expectation based on whether the coupling has succeeded (in merging 
the two processes) by time $T_{0}$ or not. Note that 
\begin{equation}
\tau^{\xi}\mathbbm{1}_{\left\{ \xi_{T_{0}}=\tilde{\xi}_{T_{0}}\right\} }
\leq\tau^{\tilde{\xi}}\mathbbm{1}_{\left\{ \xi_{T_{0}}=\tilde{\xi}_{T_{0}},\,
\tau^{\tilde{\xi}} \geq T_{0}\right\} }+\tau^{\xi}
\mathbbm{1}_{\left\{ \xi_{T_{0}}=\tilde{\xi}_{T_{0}},\,\tau^{\tilde{\xi}}<T_{0}\right\} },
\end{equation}
and
\begin{equation}
\begin{aligned}
&\tau^{\xi}\mathbbm{1}_{\left\{ \xi_{T_{0}}=\tilde{\xi}_{T_{0}},\,
\tau^{\tilde{\xi}}<T_{0}\right\} }\\ 
&\qquad = \tau^{\xi}\mathbbm{1}_{\left\{ \xi_{T_{0}}=\tilde{\xi}_{T_{0}},\,
\tau^{\tilde{\xi}}<T_{0},\,\left|\tilde{\xi}_{T_{0}}\right|<\mathbf{S}\right\} }
+\tau^{\xi}\mathbbm{1}_{\left\{ \xi_{T_{0}}=\tilde{\xi}_{T_{0}},\,
\tau^{\tilde{\xi}}<T_{0},\,\left|\tilde{\xi}_{T_{0}}\right|\geq\mathbf{S}\right\} }\\
&\qquad  \leq \tau^{\xi}\mathbbm{1}_{\left\{ \xi_{T_{0}}=\tilde{\xi}_{T_{0}},\,
\tau^{\tilde{\xi}}<T_{0},\,\left|\tilde{\xi}_{T_{0}}\right|<\mathbf{S}\right\} }+T_{0}.
\end{aligned}
\end{equation}
Also note from the definition of the coupling that, for any $\sigma \in S_{n}$ and any $A\subseteq S_{n}$, 
$\hat{\mathbb{E}}_{(\sigma,\sigma)}[\tau_{A}^{\xi}]=\mathbb{E}_{\sigma}[\tau_{A}]$ 
because the two trajectories merge when they start from the same site. Hence  
\begin{eqnarray}
&&\hat{\mathbb{E}}_{\left(\xi_{0},\tilde{\xi}_{0}\right)}\left[\tau^{\xi}
\mathbbm{1}_{\left\{ \xi_{T_{0}}=\tilde{\xi}_{T_{0}},\,\tau^{\tilde{\xi}}<T_{0},\,
\left|\tilde{\xi}_{T_{0}}\right|<\mathbf{S}\right\} }\right]\\ \nonumber 
&& \qquad = \sum_{\sigma\in\bigcup_{i<\chi_{_{1}}}A_{i}}
\hat{\mathbb{E}}_{\left(\xi_{0},\tilde{\xi}_{0}\right)}\left[\tau^{\xi}
\mathbbm{1}_{\left\{ \xi_{T_{0}}=\tilde{\xi}_{T_{0}}
= \sigma,\,\tau^{\tilde{\xi}}<T_{0}\right\} }\right]\\ \nonumber 
&& \qquad \leq \sum_{\sigma\in\bigcup_{i<\chi_{_{1}}}A_{i}}
\left(\hat{\mathbb{E}}_{\left(\sigma,\sigma\right)}\big[\tau^{\xi}\big]
+T_{0}\right)\hat{\mathbb{P}}_{\left(\xi_{0},\tilde{\xi}_{0}\right)}
\left[\xi_{T_{0}}=\tilde{\xi}_{T_{0}}=\sigma,\tau^{\tilde{\xi}}<T_{0}\right]\\ \nonumber 
&& \qquad \leq \Big(T_{0}+\max_{\sigma\in\bigcup_{i<\chi_{_{1}}}A_{i}}
\mathbb{E}_{\sigma}\left[\tau^{\sigma}\right]\Big)\,
\mathbb{P}_{\tilde{\xi}_{0}}\left[\tau<T_{0}\right],
\end{eqnarray}
where we use the Markov property. Similarly, observe that 
\begin{eqnarray}
\tau^{\xi}\mathbbm{1}_{\left\{ \xi_{T_{0}}\neq\tilde{\xi}_{T_{0}}\right\} } 
& = & \tau^{\xi}\mathbbm{1}_{\left\{ \xi_{T_{0}}\neq\tilde{\xi}_{T_{0}},\,
\tau^{\xi}\leq T_{0}\right\} }+\tau^{\xi}
\mathbbm{1}_{\left\{ \xi_{T_{0}}\neq\tilde{\xi}_{T_{0}},\,\tau^{\xi}>T_{0}\right\} }\\ \nonumber
& \leq & T_{0}+\tau^{\xi_{T_{0}}}\mathbbm{1}_{\left\{ \xi_{T_{0}}\neq\tilde{\xi}_{T_{0}}\,
\tau^{\xi}>T_{0}\right\}},
\end{eqnarray}
and
\begin{eqnarray}
&&\hat{\mathbb{E}}_{\left(\xi_{0},\tilde{\xi}_{0}\right)}\left[\tau^{\xi}
\mathbbm{1}_{\left\{ \xi_{T_{0}}\neq\tilde{\xi}_{T_{0}},\,\tau^{\xi}
>T_{0}\right\} }\right]\\ \nonumber  
&& \qquad = \sum_{\sigma\colon\left|\sigma\right|<\mathbf{S}}\sum_{\sigma'\neq\sigma}
\hat{\mathbb{E}}_{\left(\xi_{0},\tilde{\xi}_{0}\right)}\left[\tau^{\xi}
\mathbbm{1}_{\left\{ \xi_{T_{0}}=\sigma,\,\tilde{\xi}_{T_{0}}
=\sigma',\,\tau^{\xi}>T_{0}\right\} }\right]\\ \nonumber
&& \qquad = \sum_{\sigma\colon\left|\sigma\right|<\mathbf{S}}\sum_{\sigma'\neq\sigma}
\hat{\mathbb{E}}_{\left(\sigma,\sigma'\right)}\big[T_{0}+\tau^{\xi}\big]\,
\hat{\mathbb{E}}_{\left(\xi_{0},\tilde{\xi}_{0}\right)}\left[\mathbbm{1}_{\left\{ \xi_{T_{0}}
=\sigma,\,\tilde{\xi}_{T_{0}}=\sigma',\,\tau^{\xi}>T_{0}\right\} }\right]\\ \nonumber
&& \qquad \leq \max_{\sigma\in\bigcup_{i<\mathbf{S}}A_{i}}\left(T_{0}+\mathbb{E}_{\sigma}
\big[\tau^{\xi}\big]\right)\,\hat{\mathbb{P}}_{\left(\xi_{0},\tilde{\xi}_{0}\right)}
\big[\xi_{T_{0}}\neq\tilde{\xi}_{T_{0}}\big].
\end{eqnarray}
Thus, (\ref{eq:exp-couplke-time decomp}) becomes
\begin{equation}
\mathbb{E}_{\xi_{0}}\left[\tau^{\xi}\right] 
\leq 2T_{0}+\mathbb{E}_{\tilde{\xi}_{0}}\left[\tau_{A_{\mathbf{S}}}\right]
+\left(\mathbb{P}_{\tilde{\xi}_{0}}\left[\tau_{A_{\mathbf{S}}}<T_{0}\right]
+\hat{\mathbb{P}}_{\left(\xi_{0},\tilde{\xi}_{0}\right)}\big[\xi_{T_{0}}
\neq \tilde{\xi}_{T_{0}}\big]\right) \Big(T_{0}
+\max_{\sigma\in\bigcup_{i<\mathbf{S}}A_{i}}
\mathbb{E}_{\sigma}\left[\tau_{A_{\mathbf{S}}}\right]\Big).
\end{equation}
We will show that the leading term in the right-hand side is $\mathbb{E}_{\tilde{\xi}_{0}}
[\tau_{A_{\mathbf{S}}}]$, and all other terms are of smaller order. From \eqref{eq:2expvaleq} 
we know that $T_{0}$ is of smaller order, and that 
\begin{equation}
\max_{\sigma\in\bigcup_{i<\mathbf{S}}A_{i}}
\mathbb{E}_{\sigma}\left[\tau_{A_{\mathbf{S}}}\right] 
= e^{O(n^{2/3})}\mathbb{E}_{\tilde{\xi}_{0}}\left[\tau_{A_{\mathbf{S}}}\right].
\end{equation}
Hence it suffices to show that the sum $\mathbb{P}_{\tilde{\xi}_{0}}[\tau_{A_{\mathbf{S}}}<T_{0}]
+\hat{\mathbb{P}}_{(\xi_{0},\tilde{\xi}_{0})}[\xi_{T_{0}} \neq \tilde{\xi}_{T_{0}}]$ is exponentially 
small. We will show that it is bounded from above by $e^{-\delta n}$.

\medskip\noindent
{\bf 2.}
By Corollary~\ref{cor:longcoupling}, the probability $\hat{\mathbb{P}}_{(\xi_{0},\tilde{\xi}_{0})}
[\xi_{T_{0}}\neq \tilde{\xi}_{T_{0}}]$ is bounded from above by $e^{-\delta n + O(n^{2/3})}$. To 
bound $\mathbb{P}_{\tilde{\xi}_{0}}[\tau_{A_{\mathbf{S}}}<T_{0}]$, we first need to limit the number 
of steps that $\{\tilde{\xi_t}\}_{t\geq0}$ can take until time $T_{0}$. From (\ref{eq: jump rate bounds}) 
and Stirling's approximation we have that 
\begin{eqnarray}
\mathbb{P}\left[N_{\tilde{\xi}}\left(T_{0}\right)\ge 3nT_{0}\right] 
& \leq & \sum_{k=0}^{\infty}
e^{nT_{0}+k}\left(\frac{nT_{0}}{3nT_{0}+k}\right)^{3nT_{0}+k}\\ \nonumber
& \leq & e^{nT_{0}}\left(\tfrac{1}{3}\right)^{3nT_{0}}
\sum_{k=0}^{\infty} e^k \left(\tfrac{1}{3}\right)^{k}
\leq 11\left(0.91\right)^{3n e^{\tfrac12 [R_{p,\beta,h}(\mathbf{t})-R_{p,\beta,h}(\mathbf{m})]}}.
\end{eqnarray}
It therefore follows that with high probability $\{\tilde{\xi}_t\}_{t \geq 0}$ does not make more than 
$3nT_{0}$ steps until time $T_{0}$. Hence 
\begin{equation}
\mathbb{P}_{\tilde{\xi}_{0}}\big[\tau_{A_{\mathbf{S}}}<T_{0}\big] 
\leq \mathbb{P}_{\tilde{\xi}_{0}}\left[\tau_{A_{\mathbf{S}}}<T_{0},
N_{\tilde{\xi}}\left(T_{0}\right)<3nT_{0}\right]
+11\left(0.91\right)^{3n e^{\tfrac12\ [R_{p,\beta,h}(\mathbf{t})-R_{p,\beta,h}(\mathbf{m})]}}.
\end{equation}
Finally, note that the event $\{\tau_{A_{\mathbf{S}}}^{\tilde{\xi}}<T_{0},N_{\tilde{\xi}}(T_{0})<3nT_{0}\}$
implies that $\{\tilde{\xi}_{t}\}_{t\geq 0}$ makes fewer than $3nT_{0}$ returns to the set $A_{\mathbf{M}}$ 
before reaching $A_{\mathbf{S}}$. By Lemma ~\ref{lem:ret}, the probability of this event is bounded from 
above by $3nT_{0} e^{([R_{p,\beta,h}(\mathbf{t})-R_{p,\beta,h}(\mathbf{m})]-\delta)n+Cn^{2/3}}
= 3ne^{-\tfrac12 \delta n+Cn^{2/3}}$, and hence 
\begin{equation}
\mathbb{P}_{\tilde{\xi}_{0}}\big[\tau_{A_{\mathbf{S}}}^{\tilde{\xi}}<T_{0}\big]
\leq 4n\, e^{-\tfrac12\delta n+Cn^{2/3}}.
\label{eq: quick escape bound}
\end{equation}
Finally, from (\ref{eq: quick escape bound}) we obtain 
\begin{equation}
\mathbb{E}_{\xi_{0}}\big[\tau_{A_{\mathbf{S}}}^{\xi}\big]
=\mathbb{E}\big[\tau_{A_{\mathbf{S}}}^{\tilde{\xi}}\big]\left[1+o\left(1\right)\right],
\end{equation}
which settles the claim. 
\end{proof}



\section{A coupling scheme}
\label{sec:couplingscheme}

In this section we define a coupling of $(\xi_{t})_{t \geq 0}$ and $\{\tilde{\xi}_{t}\}_{t \geq 0}$ with arbitrary 
starting configurations in $A_{\mathbf{M}}$. The coupling is divided into a short-term scheme, defined in 
Section~\ref{ss:short} and analysed in Lemma \ref{lem:couplingscheme} below, followed by a long-term 
scheme, defined in Section~\ref{ss:long} and analysed Corollary \ref{cor:longcoupling} below. The goal
of the coupling is to keep the process $\{m(\xi_{t})\}_{t\geq0}$ bounded by $\{\theta_{t}^{u}\}_{t\geq0}$ 
from above and bounded by $\{\theta_{t}^{l}\}_{t\geq0}$ from below (the precise meaning will become 
clear in the sequel). 


\subsection{Short-term scheme}
\label{ss:short}

\begin{lemma}[{\bf Short-term coupling}]
\label{lem:couplingscheme}
With $\mathbb{P}_{\ER_n(p)}$-probability tending to $1$ as $n\to\infty$, there is a coupling 
$\{\xi_{t},\tilde{\xi}_{t}\}_{t \geq 0}$ of $\{\xi_{t}\}_{t \geq 0}$ and $\{\tilde{\xi}_{t}\}_{t \geq 0}$ 
such that 
\begin{equation}
\mathbb{P}\big[\xi_{2n}\neq\tilde{\xi}_{2n}] \leq e^{n^{-2/3}}
\end{equation}
for any initial states $\xi_{0} \in A_{\mathbf{M}}$ and $\tilde{\xi}_{0} \in A_{\mathbf{M}}$.
\end{lemma}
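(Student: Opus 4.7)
I would couple the two processes via \emph{basic coupling}: attach to each vertex $v\in V$ an independent rate-1 Poisson clock with associated i.i.d.\ $\mathrm{Unif}[0,1]$ marks, and at each ring use the common mark to decide simultaneously whether to flip $v$ in both copies against their respective Metropolis rates $r(\xi,\xi^v)$ and $r(\tilde\xi,\tilde\xi^v)$. Two preliminary estimates frame the argument. First, applying Lemma~\ref{lem:stray} to each marginal and union-bounding, with probability at least $1-2e^{-n^{2/3}}$ both copies stay inside $A_{\mathbf{M}+C_1 n^{5/6}}$ throughout $[0,2n]$ (call this the \emph{localisation event}). Second, by Lemma~\ref{lem:update jumps} applied to both copies, every vertex has been updated at least once in both copies by some time $\ll 2n$ with failure probability $O(n^{-1})$, so by $t=2n$ each vertex has in fact had $\Theta(n)$ updates in each copy.

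On the localisation event, the geometric representation \eqref{eq:Hn as boundary size} combined with the degree concentration of Lemma~\ref{lem: degree bound} yields a uniform bound on the rate mismatch,
\begin{equation*}
|r(\xi_t,\xi_t^v) - r(\tilde\xi_t,\tilde\xi_t^v)| \;\leq\; C\beta\,\bigl(\tfrac{|\Delta_t|}{n} + n^{-1/2}\log n\bigr),
\end{equation*}
where $\Delta_t = \{v : \xi_t(v) \neq \tilde\xi_t(v)\}$. Moreover, Lemma~\ref{lem:H near m} ensures that for all but $O(n^{2/3})$ vertices in each copy the flip $+1 \to -1$ has rate exactly $1$, so for the bulk of $+1$-vertices on which the two copies agree the shared uniform marks force lockstep flips. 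Applying Lemma~\ref{lem:tv bound exponentials} to bound the per-update coupling cost, one can then control the probability that the two trajectories fail to merge by time $2n$.

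\textbf{Main obstacle.} The crux is producing a quantitative lower bound on the merger probability. Under naive basic coupling $|\Delta_t|$ does not visibly contract, because the positive drift from mismatches at vertices outside $\Delta_t$ (summing to $\Theta(|\Delta_t|)$ via $n - |\Delta_t|$ vertices each contributing $O(|\Delta_t|/n)$) is a priori comparable to the negative drift from inside. The resolution exploits the attractivity in Lemma~\ref{lem:H near m}: since most $+1$-vertices in each copy are flipped to $-1$ at rate exactly $1$, and these bulk vertices coincide across the two copies on the localisation event, the dominant part of the $+$-spin evolution is forcibly coupled, and the ``uncoupled'' dynamics involves only the atypical $O(n^{2/3})$ vertices per unit time. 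A convenient implementation is a sub-coupling in which both copies are driven along a common reference trajectory (e.g.\ a greedy descent path in volume built from the shared clocks) and one shows that, by summing the per-update cost from Lemma~\ref{lem:tv bound exponentials} over the at most $O(n \cdot n^{2/3})$ atypical update events in $[0,2n]$, both copies follow this trajectory to a common terminal configuration with probability at least $1 - e^{-n^{2/3}}$.
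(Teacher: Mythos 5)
Your framing (basic coupling, localisation via Lemma~\ref{lem:stray}, per-vertex rate mismatch of order $n^{-1/3}+|\Delta_t|/n$, the role of Lemma~\ref{lem:H near m} in suppressing the contribution from common $+1$-vertices) matches the skeleton of the paper's argument, and you correctly pinpoint the crux: why does $|\Delta_t|=|W_1^t|$ contract rather than merely stay bounded. However, you then mis-diagnose the resolution. Lemma~\ref{lem:H near m} only controls the contribution to the \emph{growth} of $|\Delta_t|$ coming from vertices $v\in\xi_t\cap\tilde\xi_t$ (it makes that term $O(n^{-1/6})|\Delta_t|$, negligible); it does \emph{not} shrink the dominant growth term, which comes from vertices $v\notin\xi_t\cup\tilde\xi_t$ and is genuinely of order $e^{2\beta(p\mathbf{m}+h)}\,2\beta p(1-\mathbf{m})\,|\Delta_t|$. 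Meanwhile, the \emph{shrink} rate of $|\Delta_t|$ is $\approx\bigl(e^{2\beta(p\mathbf{m}+h)}+1\bigr)|\Delta_t|$, because for $v\in\Delta_t$ the two copies see local fields of opposite sign (one pays a Metropolis penalty, the other flips at rate $1$). The contraction is therefore not automatic: it requires the strict inequality
\begin{equation*}
e^{2\beta(p\mathbf{m}+h)}+1 \;>\; e^{2\beta(p\mathbf{m}+h)}\,2\beta p(1-\mathbf{m}),
\end{equation*}
which, after substituting the defining identity $e^{2\beta(p\mathbf{m}+h)}=\tfrac{1+\mathbf{m}}{1-\mathbf{m}}$ of $\mathbf{m}$, reduces to $\beta p<\tfrac{1}{1-\mathbf{m}^2}$. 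This holds in the metastable regime because $\mathbf{m}<a_\lambda=-\sqrt{1-1/\lambda}$ where the drift of $J$ changes sign. That algebraic step is the mathematical heart of the lemma and is entirely absent from your proposal; without it, the claimed negative drift of $|\Delta_t|$ has no justification.

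Your closing ``sub-coupling along a common reference trajectory'' does not repair this. First, there is no monotone greedy descent near $A_{\mathbf{M}}$ to descend along: the dynamics equilibrates in a well rather than monotonically decreasing volume. Second, a union bound over per-update total-variation costs cannot work as stated: there are $\Theta(n^2)$ updates in time $2n$ and the \emph{typical} per-update mismatch probability is already $\Theta(n^{-1/3})$ (not zero), so the naive sum is $\Theta(n^{5/3})$, far larger than $1$. The paper avoids this by running a birth--death-type drift argument on $|W_1^t|$ (its Steps 6--8): once the contraction inequality above is established, one compares $|W_1^t|$ to a nearest-neighbour random walk on $\{0,\dots,n\}$ with a uniform downward bias for $|W_1^t|\le n^{6/7}$ and a near-deterministic downward bias above $n^{6/7}$, and shows that with $\Theta(n)$ steps available in time $2n$ the walk reaches $0$ with probability $1-O(e^{-n^{2/3}})$. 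You would need to supply both the drift inequality and this random-walk step to make the proof complete.
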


\begin{proof}
The main idea behind the proof is as follows. Define
\begin{equation}
\label{eq:W1}
W_{1}^{t} =\lbrace{ v \in V\colon\, \xi_{t}(v)=-\tilde{\xi}_{t}(v)\rbrace}
= \xi_{t} \Delta \tilde{\xi}_{t},
\end{equation} 
i.e., the symmetric difference between the two configurations $\xi_{t}$ and $\tilde{\xi}_{t}$, and
\begin{equation}
\label{eq:W2}
W_{2}^{t} = \lbrace{ v \in V\colon\, \xi_{t}(v)=\tilde{\xi}_{t}(v)\rbrace} = V \setminus W_{1}^{t}.
\end{equation}
The coupling we are about to define will result in the set $W_{1}^{t}$ shrinking at a higher rate than the set 
$W_{2}^{t}$, which will imply that $W_{1}^{t}$ contracts to the empty set. The proof comes in 8 Steps.

\medskip\noindent
{\bf 1.}
We begin with bounds on the relevant transition rates that will be required in the proof. Recall from 
Lemma \ref{lem: Rate bounds} (in particular, \eqref{eq:probl R union} and \eqref{eq:R- union}) that with 
$\mathbb{P}_{\ER_{n}(p)}$-probability at least $1-e^{-2n}$ there are at most $2n^{2/3}$ vertices 
$v\in\overline{\xi_{t}}$ (i.e., $\xi_{t}\left(v\right)=-1$) such that $|E(v,\xi_t)|=\left|\left\{ w\in\xi_{t} \colon\,
\left(v,w\right)\in E\right\} \right|\geq p\left|\xi_{t}\right|+n^{2/3}$, and similarly at most $2n^{2/3}$ 
vertices $v\in\overline{\xi_{t}}$ such that $\left|E(v,\xi_t)\right|\leq p\left|\xi_{t}\right|-n^{2/3}$. Analogous 
bounds are true for $\tilde{\xi}_{t}$, $t\geq0$. Denote the set of \emph{bad} vertices for $\xi_{t}$ by 
\begin{equation}
B_{t}=\big\{ v\in \overline{\xi_{t}}\colon\,\big| |E(v,\xi_t)|-p|\xi_{t}|\big|\geq n^{2/3}\},
\end{equation}
and the set of bad vertices for $\tilde{\xi}_{t}$ by $\tilde{B}_{t}$. Let $\hat{B}_{t}=B_{t}\cup\tilde{B}_{t}$. 
Recall that $\xi_{t}^{v}$ denotes the configuration obtained from $\xi_{t}$ by flipping the sign at vertex 
$v\in V$. If $v\notin\hat{B}_{t}$, then from \eqref{eq:Hn as boundary size} and Lemma \ref{lem: degree bound}
it follows that, for $v\notin\xi_{t}$, 
\begin{eqnarray}
H_{n}\left(\xi_{t}^{v}\right)-H_{n}\left(\xi_{t}\right) 
& = & \tfrac{2}{n}\left(\left|\partial_{E}\xi_{t}^{v}\right|-\left|\partial_{E}\xi_{t}\right|\right)-2h\\ \nonumber
& = & \tfrac{2}{n}\left(\mathrm{deg}\left(v\right)-2\left|E(v,\xi_t)\right|\right)-2h\\ \nonumber
& \leq & \tfrac{2}{n}\left(pn+n^{1/2}\log n-2p\left|\xi_{t}\right|+2n^{2/3}\right)-2h,
\end{eqnarray}
and similarly, for $v\in\xi_{t}$, 
\begin{equation}
H_{n}\left(\xi_{t}^{v}\right)-H_{n}\left(\xi_{t}\right)\leq\tfrac{2}{n}\left(pn+n^{1/2}\log n
-2p\left(n-\left|\xi_{t}\right|\right)+2n^{2/3}\right)+2h.
\end{equation}
Again, by \eqref{eq:Hn as boundary size} and Lemma \ref{lem: degree bound},
we have similar lower bounds, namely, if $v\notin \hat{B}_{t}$, then, for $v\notin \xi_{t}$, 
\begin{equation}
H_{n}(\xi_{t}^{v})-H_{n}(\xi_{t})
\geq\tfrac{2}{n}\left(pn-n^{1/2}\log n-2p\left|\xi_{t}\right|-2n^{2/3}\right)-2h,
\end{equation}
and, for $v\in\xi_{t}$,
\begin{equation}
H_{n}(\xi_{t}^{v})-H_{n}(\xi_{t})
\geq\tfrac{2}{n}\left(pn-n^{1/2}\log n-2p\left(n-\left|\xi_{t}\right|\right)
-2n^{2/3}\right)+2h.
\end{equation}
Identical bounds hold for $H_{n}(\tilde{\xi}_{t}^{v})-H_{n}(\tilde{\xi}_{t})$. Therefore, if 
$v\notin\hat{B}_{t}$, and if either $v\in\xi_{t}\cap\tilde{\xi}_{t}$
or $v\notin\xi_{t}\cup\tilde{\xi}_{t}$, then
\begin{equation}
\label{absratediff}
\begin{aligned}
\left|r(\xi_{t},\xi_{t}^{v})-r(\tilde{\xi}_{t},\tilde{\xi}_{t}^{v})\right| 
&=\left| e^{-\beta[H_{n}(\xi_{t}^{v})-H_{n}(\xi_{t})]_{+}}
-e^{-\beta[H_{n}(\tilde{\xi}_{t}^{v})-H_{n}(\tilde{\xi}_{t})]_{+}}\right|\\
&=e^{-\beta[H_{n}(\xi_{t}^{v})-H_{n}(\xi_{t})]_{+}}
\left|1-e^{\beta\big([H_{n}(\xi_{t}^{v})
-H_{n}(\xi_{t})]_{+}-[H_{n}(\tilde{\xi}_{t}^{v})
-H_{n}(\tilde{\xi}_{t})]_{+}\big)}\right|\\
&\leq\left[1+o_{n}(1)\right] e^{-\beta[H_{n}(\xi_{t}^{v})
-H_{n}(\xi_{t})]_{+}}
\left(e^{8\beta n^{-1/3}
+ \tfrac{4p}{n}\big(|\xi_{t}|-|\tilde{\xi}_{t}| \big)}-1\right)\\
& \leq\left[1+o_{n}(1)\right]\left(e^{8\beta n^{-1/3}
+ \tfrac{4p}{n}\big(|\xi_{t}|-|\tilde{\xi}_{t}|\big)}-1\right)\\
& \leq\left[1+o_{n}(1)\right]\left(8\beta n^{-1/3}
+ \tfrac{4p}{n}\big(|\xi_{t}|-|\tilde{\xi}_{t}|\big)\right).
\end{aligned}
\end{equation}

\medskip\noindent
{\bf 2.}
Having established the above bounds on the transition rates, we give an explicit construction of the coupling 
$\{\xi_{t},\tilde{\xi}_{t}\} _{t\geq0}$. 

\begin{definition}
{\rm
$\mbox{}$
\begin{itemize} 
\item[(I)] 
We first define the coupling for time $t=0$. For $t>0$ this coupling will be \emph{renewed} after each 
renewal of $\{\xi_{t},\tilde{\xi}_{t}\}_{t \geq 0}$, i.e., whenever either of the two processes jumps to a 
new state. To that end, for every $v\in W_{2}^{0}$ (i.e., $\xi_{0}(v)=\tilde{\xi}_{0}(v)$), couple the exponential 
random variables $e_{0}^{v}\simeq \mathrm{Exp}(r(\xi_{0},\xi_{0}^{v}))$ and $\tilde{e}_{0}^{v} \simeq \mathrm{Exp}
(r(\tilde{\xi}_{0},\tilde{\xi}_{0}^{v}))$ associated with the transitions $\xi_{0}\to\xi_{0}^{v}$ and 
$\tilde{\xi}_{0} \to\tilde{\xi}_{0}^{v}$ according to the following scheme: 
\begin{enumerate}
\item 
Choose a point 
\[
\left(x,y\right)\in\{\left(x',y'\right)\colon\,0\leq x'<\infty,\,0\leq y'\leq r\left(\xi_{0},
\xi_{0}^{v}\right) e^{-r\left(\xi_{0},\xi_{0}^{v}\right)x'}\}
\] 
uniformly and set $e_{0}^{v}=x$. Note that, indeed, this gives $e_{0}^{v} \simeq \mathrm{Exp}(r(\xi_{0},\xi_{0}^{v}))$.
\item 
If the value $y$ from step 1 satisfies $y\leq r(\tilde{\xi}_{0},\tilde{\xi}_{0}^{v})
\exp(-r(\tilde{\xi}_{0},\tilde{\xi}_{0}^{v})x)$, then set $\tilde{e}_{0}^{v}
=e_{0}^{v}=x$. Else, choose
\[
\left(x*,y*\right)\in\left\{ \left(x',y'\right)\colon0\leq x'<\infty,\, 
r\left(\xi_{0},\xi_{0}^{v}\right) e^{-r\left(\xi_{0},\xi_{0}^{v}\right)x'}
<y'\leq r(\tilde{\xi}_{0},\tilde{\xi}_{0}^{v})
e^{-r(\tilde{\xi}_{0},\tilde{\xi}_{0}^{v})x'}\right\}
\]
uniformly and independently from the sampling in step 1, and set $\tilde{e}_{0}^{v}=x*$. Note that this 
too gives $e_{0}^{v}\simeq \mathrm{Exp}(r(\tilde{\xi}_{0},\tilde{\xi}_{0}^{v}))$.
\end{enumerate}
\item[(II)] 
For every $v\in W_{1}^{0}$, sample the random variables $e_{0}^{v}\simeq \mathrm{Exp}(r(\xi_{0},\xi_{0}^{v}))$ 
and $\tilde{e}_{0}^{v} \simeq \mathrm{Exp}(r(\tilde{\xi}_{0},\tilde{\xi}_{0}^{v}))$ associated with the transitions 
$\xi_{0}\to\xi_{0}^{v}$ and $\tilde{\xi}_{0} \to\tilde{\xi}_{0}^{v}$ independently. At time $t=0$, we use the above 
rules to define the \emph{jump} times associated with any vertex $v\in V$. Recall that $W_{2}^{0}$ is the set 
of vertices where the two configurations agree in sign. The aim of the coupling defined above is to preserve 
that agreement. Following every renewal, we re-sample all transition times anew (i.e., we choose new copies 
of the exponential variables as was done above). We proceed in this way until the first of the following two 
events happens: either $\xi_{t} = \tilde{\xi}_{t}$, or $n\log n$ transitions have been made by either one of the 
two processes. 
\end{itemize}
}
\end{definition}

\medskip\noindent
{\bf 3.}
Note that the purpose of limiting the number of jumps to $n\log n$ is to permit us to employ Lemma \ref{lem:stray}, 
which in turn we use to maintain control on the two processes being similar in volume. Further down we will also 
show that, with high probability, in time $2n$ no more than $n\log n$ transitions occur. By \eqref{absratediff} and 
Lemma \ref{lem:tv bound exponentials}, if $v\notin\hat{B}_{t}$, then
\begin{equation}
\mathbb{P}\left[e_{t}^{v}\neq e_{t}^{v}\right]
\leq\frac{2(8\beta n^{-1/3}+ \frac{4p}{n}
(|\xi_{t}|-|\tilde{\xi}_{t}|))}{e^{-2\beta(p+h)}}.
\label{eq:tv1}
\end{equation}
On the other hand, if $v\in\hat{B}_{t}$ and we let $z=\frac{2\beta\left(p+h\right)}{1-e^{-2\beta (p+h)}}$, then
\begin{eqnarray}
\mathbb{P}\left[e_{t}^{v}\neq e_{t}^{v}\right] 
&=& d_{TV}\left(e_{t}^{v},e_{t}^{v}\right)
\leq e^{-2\beta\left(p+h\right)}\int_{0}^{z} dx\,
\exp\left(-xe^{-2\beta\left(p+h\right)}\right)\nonumber \\
&=& 1-\exp\left(-\frac{2\beta\left(p+h\right)e^{-2\beta\left(p+h\right)}}
{1-e^{-2\beta\left(p+h\right)}}\right).
\label{eq:tv2}
\end{eqnarray}
Observe that, for $v\in W_{1}^{t}$, with  $\mathbb{P}_{\ER_n(p)}$-high probability
\begin{equation}
\label{eq:sumrates}
\sum_{v\in W_{1}^{t}} \left[r(\xi_{t},\xi_{t}^{v}) + r(\tilde{\xi}_{t},\tilde{\xi}_{t}^{v})\right] 
\geq \left[1+o_{n}(1)\right]\left|W_{1}^{t}\right|.
\end{equation}
Indeed, by the concentration inequalities of Lemma \ref{lem: degree bound} and the bound in 
Lemma \ref{lem:stray}, it follows that $|\xi_{t}|$ and $|\tilde{\xi}_{t}|$ are of similar magnitude:
\begin{equation}
\label{eq:maxprob}
\mathbb{P}\big[||\xi_{t}| - |\tilde{\xi}_{t}|| \geq n^{5/6}\big]
\leq e^{-n^{2/3}}.
\end{equation} 
Therefore, with $\mathbb{P}_{\ER_{n}(p)}$-high probability, for all but $O(n^{2/3})$ such $v$,
\begin{equation}
\label{eq:oppositeH}
H(\xi_{t}) - H(\xi_{t}^{v}) = \left[1+o_{n}(1)\right] 
\big[H\big(\tilde{\xi}_{t}^{v}\big)-H\big(\tilde{\xi}_{t})\big],
\end{equation}
from which \eqref{eq:sumrates} follows. The rate at which the set $W_{2}^{t}$ shrinks is equal to the rate 
at which it loses $v\in W_{2}^{t}$ such that also $v \notin \hat{B}_{t}$, plus the rate at which it loses 
$v\in W_{2}^{t}$ such that $v \in \hat{B}_{t}$. From \eqref{absratediff} it follows that the former is 
bounded from above by $|W_{2}^{t}|(8\beta n^{-1/3} + \frac{4p}{n}(|\xi_{t}|-|\tilde{\xi}_{t}|))$, while
 by \eqref{eq:probl R union} the latter is bounded by $4n^{2/3}$. Therefore, defining the stopping time
\begin{equation}
\upsilon_{i} = \inf \left\{t\colon\, |W_1^{t}|=i\right\},
\end{equation}
we have that
\begin{equation}
\label{eq:prob Dt contracts}
\mathbb{P}_{(\xi_{t},\tilde{\xi}_{t})} \big[ \upsilon_{|W_1^{t}|-1} < \upsilon_{|W_1^{t}|+1} \big] 
\geq \frac{|W_{1}^{t}|}{|W_{2}^{t}| [8\beta n^{-1/3} 
+ \frac{4p}{n}(|\xi_{t}| -|\tilde{\xi}_{t}|)]
+ 4n^{2/3}+|W_{1}^{t}|}.
\end{equation}
From Lemma \ref{lem:stray} we know that (with probability $\geq 1-e^{-n^{2/3}}$) neither $|\xi_{t}|$ 
nor $|\tilde{\xi}_{t}|$ will stray beyond $\mathbf{M} + Cn^{5/6}$ and $\mathbf{M}- Cn^{5/6}$ within 
$n^{2}\log n$ steps. Thus,
\begin{equation}
\left||\xi_{t}|-|\tilde{\xi}_{t}|\right| \leq Cn^{5/6}.
\end{equation}
Hence, for $\left|W_{1}^{t}\right| \geq n^{6/7}$ we have that \eqref{eq:prob Dt contracts} is equal to 
$1-o_{n}(1)$. 

\medskip\noindent
{\bf 4.}
Next suppose that  $\left|W_{1}^{t}\right| < n^{6/7}$. To bound the rate at which the set $W_{2}^{t}$ 
shrinks, we argue as follows. The rate at which a matching vertex $v$ becomes non-matching equals
\begin{equation}
|r( \xi_{t},\xi_{t}^v) - r(\tilde{\xi}_{t},\tilde{\xi}_{t}^{v})|.
\end{equation}
Let
\begin{equation}
\begin{aligned}
B_{1} &= -2h + \tfrac{2}{n}\left( \mathrm{deg}\left(v\right) - 2|E(v,\xi_{t})|\right),\\
B_{2} &= -2h + \tfrac{2}{n}\left( \mathrm{deg}\left(v\right) - 2|E(v,\tilde{\xi}_{t})|\right),\\
B_{3} &= -h + \tfrac{1}{n}\left( \mathrm{deg}\left(v\right) - 2|E(v,\xi_{t}\cap \tilde{\xi}_{t})|\right).
\end{aligned}
\end{equation}
For $v\notin \xi_{t} \cup \tilde{\xi}_{t}$, we can estimate
\begin{equation}
\label{eq:ratdiffbnd}
\begin{aligned}
|r( \xi_{t},\xi_{t}^v) - r(\tilde{\xi}_{t},\tilde{\xi}_{t}^{v})| 
&= \left| e^{-\beta\left[B_{1}\right]_{+}} - e^{-\beta\left[B_{2}\right]_{+}}\right|\\
&\leq e^{-2\beta [B_{3}]_+} \left| e^{-\frac{4\beta}{n} 
|E(v,\xi_{t} \backslash \tilde{\xi}_{t})|} 
-e^{-\frac{4\beta}{n} |E(v,\tilde{\xi}_{t} \backslash \xi_{t})|} \right| \\
&\leq e^{-2\beta [B_{3}]_+} \tfrac{4\beta}{n}\left| |E(v,\xi_{t} \backslash \tilde{\xi}_{t})| 
-|E(v,\tilde{\xi}_{t} \backslash \xi_{t})| \right| \\
& \leq \left[1+o_{n}(1)\right] e^{-2\beta[-p\mathbf{m}-h]_+}
\tfrac{4\beta}{n} \left|E\left(v, W_{1}^{t}\right)\right|,
\end{aligned}
\end{equation}
where we note that $W_1^t = \xi_{t} \backslash \tilde{\xi}_{t} \cup \tilde{\xi}_{t} \backslash \xi_{t}$ and 
use that, by Lemma \ref{lem: degree bound} and the bound $|\xi_{t} \backslash \tilde{\xi}_{t}| \leq 
|W_{1}^{t}| \leq n^{6/7}$,
\begin{equation}
\begin{aligned}
&\frac{1}{n}\left(\mathrm{deg}(v) - 2|E(v,\xi_{t}\cap \tilde{\xi}_{t})|\right) 
= [1+o_{n}(1)]\,p\left(1 - \tfrac{2|\xi_{t}\cap \tilde{\xi}_{t}|}{n}\right)\\
&= [1+o_{n}(1)]\,p\left(1 - \tfrac{2|\xi_{t}|}{n}\right)
= [1+o_{n}(1)]\,p\left(1 - \tfrac{2\mathbf{M}}{n}\right)
= -[1+o_{n}(1)]\,p\mathbf{m}.
\end{aligned}
\end{equation}
Note that since $\xi_{t}$ and $\tilde{\xi}_{t}$ disagree at most at $n^{6/7}$ vertices, and since $|\xi_{t}|
=[1+o_{n}(1)]\mathbf{M} = [1+o_{n}(1)]\frac{n}{2}(1+\mathbf{m})$, we have that $| v \in \xi _{t} \cup 
\tilde{\xi}_{t} | = [1+o_{n}(1)]\frac{n}{2}(1-\mathbf{m})$. Furthermore, since $|E(v, W_{1}^{t})| \leq 
[1+o_n(1)] p|W_{1}^{t}|$ and $|V|=n$, we have that
\begin{equation}
\label{eq:sum abs rate difference}
\sum_{v\notin \xi_{t}\cup\tilde{\xi}_{t}} 
| r( \xi_{t},\xi_{t}^v) - r(\tilde{\xi}_{t},\tilde{\xi}_{t}^{v})| 
\leq \left[1+o_{n}(1)\right] e^{-2\beta[-p\mathbf{m}-h]_+}\,2\beta p (1-\mathbf{m})\,|W_{1}^{t}|.
\end{equation}
For $v\in\xi_{t}\cap\tilde{\xi}_{t}$, on the other hand, Lemma \ref{lem:H near m} gives that 
\begin{equation}
r(\xi_{t},\xi_{t}^{v})=r(\tilde{\xi}_{t},\tilde{\xi}_{t}^{v})=1
\end{equation}
for all but $O(n^{2/3})$ many such $v$. If $v$ is such that $r(\xi_{t},\xi_{t}^{v}) \neq 
r(\tilde{\xi}_{t},\tilde{\xi}_{t}^{v})$, then a computation identical to the one leading to 
\eqref{eq:sum abs rate difference} gives that 
\begin{equation}
\sum_{v\in\xi_{t}\cap\tilde{\xi}_{t}} |r\left(\xi_{t},\xi_{t}^{v}\right)
-r(\tilde{\xi}_{t},\tilde{\xi}_{t}^{v})| = O(n^{-1/6}) \big|W_{1}^{t}\big|.
\label{eq:small unmatching rate}
\end{equation}
Combining \eqref{eq:sum abs rate difference} and \eqref{eq:small unmatching rate}, we obtain
\begin{equation}
\label{eq:shrink1}
\sum_{v \in W_{t}^2} |r\left(\xi_{t},\xi_{t}^{v}\right) -r(\tilde{\xi}_{t},\tilde{\xi}_{t}^{v})| 
\leq \left[1+o_{n}(1)\right] e^{-2\beta[-p\mathbf{m}-h]_+}\,2\beta p (1-\mathbf{m})\,\big|W_{1}^{t}\big|,
\end{equation}
which bounds the rate at which $W_2^{t}$ shrinks.

\medskip\noindent
{\bf 5.}
To bound the rate at which the set $W_{1}^{t}$ shrinks, we argue as follows. The rate at which 
a non-matching vertex $v$ becomes a matching equals
\begin{equation}
r(\xi_{t},\xi_{t}^{v})+r(\tilde{\xi}_{t},\tilde{\xi}_{t}^{v}).
\end{equation}
Note that, for every $v\in W_{1}^{t}$, 
\begin{equation}
\label{eq:jumpgreaterthan1}
H\left(\xi_{t}^{v}\right)-H\left(\xi_{t}\right)
=-\left[1+o_{n}(1)\right]\big[H(\tilde{\xi}_{t}^{v})-H(\tilde{\xi}_{t})\big],
\end{equation}
since, up to an arithmetic correction of magnitude $|W_{1}^{t}|=O(n^{6/7})$, $v$ has the same number 
of neighbours in $\xi_{t}$ as in $\tilde{\xi}_{t}$. Hence it follows that
\begin{equation}
\sum_{v \in W_{1}^{t}} \left[r(\xi_{t},\xi_{t}^{v})+r(\tilde{\xi}_{t},\tilde{\xi}_{t}^{v})\right]
=\left[1+o_{n}(1)\right]\big(e^{-2\beta [-p\mathbf{m}-h]_+}+e^{-2\beta [p\mathbf{m}+h]_+}\big)|W_{1}^{t}|,
\label{eq:matching rate}
\end{equation}
which bounds the rate at which $W_{1}^{t}$ shrinks. 

\medskip\noindent
{\bf 6.}
Combining \eqref{eq:shrink1} and \eqref{eq:matching rate},
and noting that $p\mathbf{m}+h<0$, we see that $\left|W_{1}^{t}\right|$ is contracting when
\begin{equation}
[1+o_n(1)] \left(e^{2\beta(p\mathbf{m}+h)} + 1\right)\,\left|W_{1}^{t}\right|
>\left[1+o_{n}(1)\right]\left(e^{2\beta(p\mathbf{m}+h)}\,4\beta p\right)\,\left|W_{1}^{t}\right|.
\end{equation}
For this in turn it suffices that 
\begin{equation}
\label{eq:mineq}
e^{2\beta(p\mathbf{m}+h)} + 1 > e^{2\beta(p\mathbf{m}+h)} 2\beta p(1-\mathbf{m}).
\end{equation}

\medskip\noindent
{\bf 7.}
Note from the definition of $\mathbf{m}$ in \eqref{eq: mts} that, up to a correction factor of $1+o_{n}(1)$,
$\mathbf{m}$ solves the equation $J(\mathbf{m})=0$ with
\begin{equation}
J_{p,\beta,h}(a) = 2\lambda\left(a+\tfrac{h}{p}\right)+\log\left(\tfrac{1-a}{1+a}\right), \qquad \lambda = \beta p,
\end{equation}
i.e., 
\begin{equation}
\tfrac{1+\mathbf{m}}{1-\mathbf{m}} = e^{2\lambda\left(\mathbf{m}+\frac{h}{p}\right)}.
\end{equation}
Hence from \eqref{eq:mineq} it follows that $|W_{1}^{t}|$ is contracting whenever we are in the metastable 
regime and the inequality 
\begin{equation}
\lambda < \frac{1}{1-\mathbf{m}^2}
\label{eq:1lambda}
\end{equation}
is satisfied. From \eqref{eq:Jprime} it follows that the equality
\begin{equation}
\lambda = \frac{1}{1-a^2}
\end{equation}
holds for $a=a_{\lambda}=-\sqrt{1-1/\lambda}$, which in turn is bounded between the values $\mathbf{m} 
< a_{\lambda} < \mathbf{t}< 0$, and therefore 
\begin{equation}
 \frac{1}{1-\mathbf{m}^2} > \frac{1}{1-a_{\lambda}^2} = \lambda.
\end{equation}
This shows that $|W_{1}^{t}|$ is contracting whenever we are in the metastable regime.

\medskip\noindent
{\bf 8.}
To conclude, we summarise the implication of the contraction of the process $|W_{1}^{t}|$. The probability 
in \eqref{eq:prob Dt contracts} is equal to $1-O_{n}(n^{\frac{5}{6}-\frac{6}{7}})$ for $|W_{1}^{t}| > n^{6/7}$, 
and is strictly larger than $\frac{1}{2}$ for $|W_{1}^{t}| \leq n^{6/7}$. Furthermore, from \eqref{eq:sumrates}
we know that the rate at which $W_1^{t}$ shrinks is $\geq 1$. This allows us to ensure that sufficiently 
many steps are made by time $2n$ to allow $W_{1}^{t}$ to contract to the empty set. In particular, the 
number steps taken by $W_{1}^{t}$ up to time $2n$ is bounded from below by a Poisson point process 
$N(t)$ with unit rate, for which we have 
\begin{eqnarray}
\mathbb{P}\left[N\left(2n\right) \leq \tfrac{3n}{2}\right] 
\leq 2n\frac{\left(2n\right)^{\left(\tfrac{3n}{2}\right)}e^{-2n}}{\left(\tfrac{3n}{2}\right)!}
\leq 2n\left(\tfrac{4n}{3}\right)^{\left(\tfrac{3n}{2}\right)}e^{-\tfrac{n}{2}}
\leq 1.07^{\left(-\tfrac{n}{2}\right).}
\end{eqnarray}
In other words, with probability exponentially close to $1$, we have that at least $3n/2$ jumps are made in 
time $2n$. To bound the probability that $W_{1}^{t}$ has not converged to the empty set, note that this probability
decreases in the number of transitions made by $W_{1}^{t}$. Therefore, without loss of generality, we may 
assume that $\frac{3n}{2}$ transitions were made, and that we start with $|W_{1}^{0}|=n$. We claim that, with 
high probability, in time $2n$, $W_{1}^{t}$ takes at most $\frac{100n}{\log n}$ increasing steps (i.e., $i \to i+1$) 
in the interval $[n^{5/6},n]$. Indeed, note that the probability of the latter occurring is less than 
\begin{equation}
2^{M}O\big(n^{-1/42}\big)^{\frac{100n}{\log n}} = O\big(e^{-n}\big).
\end{equation}
It follows that at least $\frac{n}{2}[1+o_{n}(1)]$ steps are taken in the interval $[0,n^{5/6}]$. But then, using 
\eqref{eq:prob Dt contracts}, we have that the probability of an increasing step is at most $\frac{1}{2}-\epsilon$
for some $\epsilon > 0$, and therefore the probability of that event is at most
\begin{equation}
2^{\frac{n}{2}[1+o_{n}(1)]}\left(\tfrac{1}{2}+\epsilon\right)^{\frac{n}{4}[1+o_{n}(1)]}
\left(\tfrac{1}{2}-\epsilon\right)^{\frac{n}{4}[1+o_{n}(1)]} 
= 4^{\frac{n}{4}[1+o_{n}(1)]}\left(\tfrac{1}{4}-\epsilon^{2}\right)^{\frac{n}{4}[1+o_{n}(1)]}
= \left(1-4\epsilon^{2}\right)^{\frac{n}{4}[1+o_{n}(1)]}.
\end{equation}
Finally, observing that in the entire proof so far, the largest probability for any of the bounds not to hold is 
$O(e^{n^{-2/3}})$ (see \eqref{eq:maxprob} and the paragraph following \eqref{eq:prob Dt contracts}), we
get
\begin{equation}
\mathbb{P}\left[|W_{1}^{2n}|>0\right] \leq O\big(e^{-n^{2/3}}\big)
\end{equation}
and so the claim of the lemma follows.
\end{proof}


\subsection{Long-term scheme}
\label{ss:long}

\begin{corollary}[{\bf Long-term coupling}]
\label{cor:longcoupling}
Let $\delta >0$. With $\mathbb{P}_{\ER_n(p)}$-probability tending to $1$ as $n\to\infty$, there is a coupling 
of $\{\xi_{t}\}_{t \geq 0}$ and $\{\tilde{\xi}_{\tilde{t}}\}_{t \geq 0}$, and there are times $t$ and $\tilde{t}$ with 
$\max(t,\tilde{t})<e^{n\Gamma^{\star}-\delta n}$, such that
\begin{equation}
\mathbb{P}\big[\xi_{t} \neq \tilde{\xi}_{\tilde{t}}\big] \leq e^{-n\delta+ O(n^{2/3})}.
\end{equation}  
\end{corollary}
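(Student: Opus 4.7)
The plan is to iterate the short-term coupling of Lemma~\ref{lem:couplingscheme} across sufficiently many rounds to amplify the per-round success probability into the sub-exponentially small failure probability $e^{-\delta n + O(n^{2/3})}$ demanded by the corollary. Starting from $\xi_0, \tilde{\xi}_0 \in A_{\mathbf{M}}$, one round will consist of: (a) running the short-term coupling of Lemma~\ref{lem:couplingscheme} for time $2n$; and (b), if the processes have not merged, running a short buffer phase of length $O(\log n)$ during which both processes return to $A_{\mathbf{M}}$. The configurations at the end of the buffer will serve as the initial configurations for the next round. I will take $t = \tilde{t} = N(2n + C \log n)$ with $N = \lceil \delta n^{1/3} \rceil$, so that $t = O(n^{4/3})$, which is comfortably less than $e^{n\Gamma^\star - \delta n}$.

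To estimate the failure probability, I will use the strong Markov property at the end of each round: conditional on the graph, successive rounds are conditionally independent given their starting configurations, and Lemma~\ref{lem:couplingscheme} gives a per-round failure probability at most $e^{-n^{2/3}}$ uniformly over pairs of initial configurations in $A_{\mathbf{M}}$. The probability that all $N$ rounds fail is therefore at most $\bigl(e^{-n^{2/3}}\bigr)^N \leq e^{-\delta n}$. To this I will add the bad-event contributions: Lemma~\ref{lem:stray} (each process stays in $A_{\mathbf{M}+C_1 n^{5/6}}$ during phase (a) with probability $\geq 1 - e^{-n^{2/3}}$), Lemma~\ref{lem:returntail} (return to $A_{\mathbf{M}}$ within $O(\log n)$ conditional on not hitting $A_{\mathbf{T}_n}$, with exponential tails), and Lemma~\ref{lem:ret} (no premature escape to $A_{\mathbf{S}}$ during the $O(n^{4/3})$ total time). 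Union-bounding over $N = O(n^{1/3})$ rounds yields an extra contribution of $e^{-n^{2/3} + O(\log n)}$, which is absorbed into the $O(n^{2/3})$ correction in the exponent.

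The main obstacle will be cleanly iterating the coupling, since after a failed attempt the processes lie in $A_{\mathbf{M} + C_1 n^{5/6}}$ rather than exactly in $A_{\mathbf{M}}$, so Lemma~\ref{lem:couplingscheme} cannot be applied verbatim. I will handle this via the buffer phase: by Lemma~\ref{lem:returntail} both processes return to $A_{\mathbf{M}}$ within $O(\log n)$ time with exponentially small failure probability per round, and during this buffer neither can reach $A_{\mathbf{S}}$ by Lemma~\ref{lem:ret}. A secondary difficulty is ensuring that the union bound over all rounds fits into the $O(n^{2/3})$ slack: this forces the specific choice $N = \lceil \delta n^{1/3} \rceil$, large enough that the primary term $(e^{-n^{2/3}})^N$ dominates, but small enough that the total elapsed time remains polynomial. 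Alternatively, one might strengthen Lemma~\ref{lem:couplingscheme} so that it accepts initial configurations anywhere in $A_{\mathbf{M}+O(n^{5/6})}$, since its contraction analysis of $|W_1^t|$ relied only on the magnetizations being $[1+o_n(1)]\mathbf{m}$; this would eliminate the buffer phase and simplify the iteration.
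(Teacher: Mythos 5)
Your overall strategy (iterate Lemma~\ref{lem:couplingscheme}) is the right one and matches the paper in spirit, but two of the quantitative steps do not go through as you have set them up.

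\textbf{The buffer/union-bound arithmetic does not give $e^{-\delta n + O(n^{2/3})}$.} You propose a buffer of length $O(\log n)$ and claim that the per-round bad events contribute $e^{-n^{2/3}+O(\log n)}$, ``absorbed into the $O(n^{2/3})$ correction.'' Neither piece of this is correct. First, a buffer of length $O(\log n)$ gives, via Lemma~\ref{lem:returntail}, a per-round failure probability of order $e^{-C\log n}=n^{-C}$, which is only polynomially small; over $N=O(n^{1/3})$ rounds the union bound yields something polynomial in $n$, not $e^{-\delta n}$. Second, even for the events whose per-round failure probability genuinely is $e^{-n^{2/3}}$ (e.g.\ Lemma~\ref{lem:stray}), the union bound over $N=O(n^{1/3})$ rounds produces $N e^{-n^{2/3}} = e^{-n^{2/3}+O(\log n)}$, and for any fixed $\delta>0$ this is \emph{larger}, not smaller, than $e^{-\delta n+O(n^{2/3})}$ once $n$ is large (since $n^{2/3}\ll\delta n$). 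So the bad-event contribution dominates your intended main term $(e^{-n^{2/3}})^N=e^{-\delta n}$ and ruins the bound. Merely increasing $N$ does not help: the union-bound term only grows. To fix this, the argument must not accumulate a per-round bad-event cost; instead, the failure probability should come from a single event (as in the paper: that fewer than $e^{(\Gamma^\star-\delta)n}$ returns to $A_{\mathbf{M}}$ occur before $\tau_{A_{\mathbf{S}}}$, controlled once by Lemma~\ref{lem:ret}), with the coupling-failure term then being doubly exponentially small because it is the probability that \emph{all} of exponentially many independent attempts fail.

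\textbf{Synchronizing $t=\tilde t$ is a real obstruction, and the paper deliberately avoids it.} At the end of a failed round plus buffer, you need both processes to sit in $A_{\mathbf{M}}$ at the \emph{same} absolute time to restart Lemma~\ref{lem:couplingscheme}. But $A_{\mathbf{M}}$ is a single magnetisation level set and the process exits it at rate $O(n)$, so the probability of finding a given process exactly in $A_{\mathbf{M}}$ at a predetermined time is vanishingly small; ``both return to $A_{\mathbf{M}}$ within $O(\log n)$'' does not imply they are ever there simultaneously. The corollary's statement allows $t\neq\tilde t$ precisely to sidestep this: the paper couples the trajectory of $\xi$ after its $i$-th return time $\mathbf{s}_i$ with the trajectory of $\tilde\xi$ after its $i$-th return time $\tilde{\mathbf{s}}_i$, which are generally different absolute times, and lets the processes run independently between coupling windows. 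This removes both the synchronization problem and the need for a buffer phase. Your alternative suggestion at the end (strengthen Lemma~\ref{lem:couplingscheme} to accept starts in $A_{\mathbf{M}+O(n^{5/6})}$) addresses the starting-set mismatch but not the timing mismatch, and in any case still leaves the union-bound issue above.

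=== END REVIEW ===
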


\begin{proof}
Let $\mathbf{s}_{i}$ be the $i^{th}$ return-time of $\{\xi_t\}_{t \geq 0}$ to $A_{\mathbf{M}}$. Define 
$\{\mathbf{\tilde{s}}_{i}\}_{i\geq0}$ in an analogous manner for $\{\tilde{\xi}_t\}_{t \geq 0}$. Then we 
can define a coupling of $\{\xi_{t}\}_{t \geq 0}$ and $\{\tilde{\xi}_{t}\}_{t \geq 0}$ as follows. For $i\geq0$ 
and $0\leq s\leq2n$, couple $\xi_{\mathbf{s_{i}}+s}$ and $\tilde{\xi}_{\mathbf{\tilde{s}_{i}}+s}$ as 
described in Lemma \ref{lem:couplingscheme}. For times $t \in (\mathbf{s_{i}}+2n, \mathbf{s_{i+1}})$ 
and $\tilde{t} \in (\mathbf{\tilde{s}_{i}}+2n,\mathbf{\tilde{s}_{i+1}})$, let $\{\xi_{t}\}_{t \geq 0}$ and 
$\{\tilde{\xi}_{\tilde{t}}\}_{t \geq 0}$ run independently of each other. Terminate this coupling at the 
first time $t$ such that $t=\mathbf{s_{i}}+s$ for some $s\leq 2n$ and $\xi_{t}=\tilde{\xi}_{\tilde{t}}$ 
with $\tilde{t} =\mathbf{\tilde{s}_{i}}+s$, from which point onward we simply let $\xi_{t}=\tilde{\xi}_{\tilde{t}}$. 
It is easy to see that the coupling above is an attempt at repeating the coupling scheme of Lemma 
\ref{lem:couplingscheme} until the paths of the two processes have crossed. To avoid having 
to wait until both processes are in $A_{\mathbf{M}}$ at the same time, the coupling defines a joint 
distribution of $\xi_{t}$ and $\tilde{\xi}_{\tilde{t}}$.

Note that, by Lemma \ref{lem:ret}, with probability of at least $1-e^{-\delta n + O(n^{2/3})}$, $\{\xi_{t}\}_{t \geq 0}$  
will visit $A_{\mathbf{M}}$ at least $e^{\Gamma^{\star}n - \delta n}$ times before reaching $A_{\mathbf{S}}$, for 
any $\delta>0$. The same statement is true for $\{\tilde{\xi}_{\tilde{t}}\}_{\tilde{t} \geq 0}$. Assuming that the 
aforementioned event holds for both $\xi_{t}$ and $\tilde{\xi}_{\tilde{t}}$, the probability that the coupling does 
not succeed (i.e., the two trajectories do not intersect as described earlier) is at most
\begin{equation}
\left[O\big(e^{n^{-2/3}}\big)\right]^{e^{(\Gamma^{\star}-\delta) n}}.
\end{equation}
Therefore, the probability that the coupling does not succeed before either of $\{\xi_{t}\}_{t \geq 0}$ or 
$\{\tilde{\xi}_{\tilde{t}}\}_{\tilde{t} \geq 0}$ reaches $A_{\mathbf{S}}$ is at most $e^{-\delta n + O(n^{2/3})}$. 
\end{proof}


\section{Proof of the main metastability theorem}
\label{s:proofmettheorem}

In this section we prove Theorem~\ref{thm:metER}. 

\begin{proof}
The key is to show that with $\mathbb{P}_{\ER_n(p)}$-probability tending to $1$ as $n\to\infty$, 
for any $\xi_{0}^{u} \in A_{\mathbf{M}^{u}}$, $\xi_{0}\in A_{\mathbf{M}}$ and $\xi_{0}^{l}\in A_{\mathbf{M}^{l}}$,
\begin{equation}
\mathbb{E}_{\xi_{0}^{u}}\left[\tau_{\mathbf{S}^{u}}\right]
\leq \mathbb{E}_{\xi_{0}}\left[\tau_{\mathbf{S}}\right]
\leq \mathbb{E}_{\xi_{0}^{l}}\left[\tau_{\mathbf{S}^{l}}\right].
\end{equation}
Note that $\mathbb{E}_{\xi_{0}}\left[\tau_{\mathbf{S}}\right]$ is the same for all $\xi_{0} \in A_{\mathbf{M}}$ 
up to a multiplicative factor of $1+o_{n}(1)$, as was shown in Sections~\ref{ss:proofuht} and 
\ref{sec:couplingscheme}. Therefore it is suffices to find \emph{some} convenient $\xi \in A_{\mathbf{M}}$ 
for which we can prove the aforementioned theorem. 

\medskip\noindent
{\bf 1.}
Our proof follows four steps:
\begin{enumerate}
\item[(1)] 
Recall that for $A\subset S_{n}$, $\mu_{A}$ is the probability distribution $\mu$ conditioned to be in 
the set $A$. Starting from the initial distribution $\mu_{A_{\mathbf{M}}}$ on the set $A_{\mathbf{M}}$, 
the trajectory segment taken by $\xi_{t}$ from $\xi_{0}$ to $\xi_{\tau}$ with $\tau=\min\left(\tau_{\mathbf{M}},
\tau_{\mathbf{S}}\right)$, can be coupled to the analogous trajectory segments taken by $\xi_{t}^{l}$ 
and $\xi_{t}^{u}$, starting in $A_{\mathbf{M}^{l}}$ and $A_{\mathbf{M}^{u}}$, respectively, and this 
coupling can be done in such a way that the following two conditions hold: 
\begin{itemize}
\item[(a)] 
If $\xi_{t}$ reaches $A_{\mathbf{S}}$ before returning to $A_{\mathbf{M}}$ (i.e., $\tau_{\mathbf{S}}
<\tau_{\mathbf{M}}$), then $\xi_{t}^{u}$ reaches $A_{\mathbf{S}^{u}}$ before returning to $A_{\mathbf{M}^{u}}$. 
\item[(b)] 
If $\xi_{t}$ returns to $A_{\mathbf{M}}$ before reaching $A_{\mathbf{S}}$ (i.e., $\tau_{\mathbf{M}}
<\tau_{\mathbf{S}}$), then $\xi_{t}^{l}$ returns to $A_{\mathbf{M}^{l}}$ before reaching $A_{\mathbf{S}^{l}}$.
\end{itemize}
\item[(2)] 
We show that if $\xi_{t}$ has initial distribution $\mu_{A_{\mathbf{M}}}$ and $\tau_{\mathbf{M}}
<\tau_{\mathbf{S}}$, then upon returning to $A_{\mathbf{M}}$ the distribution of $\xi_{t}$ is once 
again given by $\mu_{A_{\mathbf{M}}}$. This implies that the argument in Step (1) can be applied 
repeatedly, and that the number of returns $\xi_{t}$ makes to $A_{\mathbf{M}}$ before reaching 
$A_{\mathbf{S}}$ is bounded from below by the number of returns $\xi_{t}^{u}$ makes to 
$A_{\mathbf{M}^{u}}$ before reaching $A_{\mathbf{S}^{u}}$, and is bounded from above by the 
number of returns $\xi_{t}^{l}$ makes to $A_{\mathbf{M}^{l}}$ before reaching $A_{\mathbf{S}^{l}}$. 
\item[(3)] 
Using Lemma \ref{lem:returntail}, we bound the time between unsuccessful excursions, i.e.,
the expected time it takes for $\xi_{t}$, when starting from $\mu_{A_{\mathbf{M}}}$, to return to 
$A_{\mathbf{M}}$, given that $\tau_{\mathbf{M}}<\tau_{\mathbf{S}}$. This bound is used together 
with the outcome of Step (2) to obtain the bound
\begin{equation}
\label{eq:sandreturn}
\mathbb{E}_{\mu_{A_{\mathbf{M}^{u}}}}^{u}\left[\tau_{\mathbf{S}}^{u}\right]
\leq \mathbb{E}_{\mu_{A_{\mathbf{M}}}}\left[\tau_{\mathbf{S}}\right]
\leq \mathbb{E}_{\mu_{A_{\mathbf{M}^{l}}}}^{l}\bigl[\tau_{\mathbf{S}}^{l}\bigr].
\end{equation}
Here, the fact that the conditional average return time is bounded by some large constant rather
than $1$ does not affect the sandwich in \eqref{eq:sandreturn}, because the errors coming from 
the perturbation of the magnetic field in the Curie-Weiss model are polynomial in $n$ (see below).       
\item[(4)] 
We complete the proof by showing that, for any distribution $\mu_{0}$ restricted to $A_{\mathbf{M}}$,
\begin{equation}
\mathbb{E}_{\mu_{0}}\left[\tau_{\mathbf{S}}\right] 
= \left[1+o_{n}(1)\right]\,\mathbb{E}_{\mu_{A_{\mathbf{M}}}}\left[\tau_{\mathbf{S}}\right].
\end{equation}
\end{enumerate}

\medskip\noindent
{\bf 2.}
Before we turn to the proof of these steps, we explain how the bound on the exponent in the prefactor 
of Theorem~\ref{thm:metER} comes about. Return to \eqref{eq: h perturbation}. The magnetic field 
$h$ is perturbed to $h \pm\, (1+\epsilon) \log (n^{11/6})/n$. We need to show how this affects the 
formulas for the average crossover time in the Curie-Weiss model. For this we use the computations
carried out in \cite[Chapter 13]{BdH15}. According to \cite[Eq.~(13.2.4)]{BdH15} we have, for any
$\xi \in A_{\mathbf{M}_n}$ and any $\epsilon>0$,
\begin{equation}
\label{eq:comp1}
\mathbb{E}_{\xi}\left[\tau_{A_{\mathbf{S}_n}}\right] = [1+o_n(1)]\,\frac{2}{1-\mathbf{t}}\,
e^{\beta n [R_n(\mathbf{t})-R_n(\mathbf{m})]}\,\frac{1}{n}\,\mathbf{S}_n
\end{equation}    
with
\begin{equation}
\mathbf{S}_n = \sum_{ {a,a' \in \Gamma_n} \atop {|a-\mathbf{t}}|<\epsilon,\,|a'-\mathbf{m}|<\epsilon}
e^{\beta n[R_n(\mathbf{a})-R_n(\mathbf{t})] - \beta n[R_n(a')-R_n(\mathbf{m})]},
\end{equation} 
where $R_n$ is the free energy defined by $R'_n= -J_n/2\beta$ (recall \eqref{eq:diffR=J pre}). 
(Here we suppress the dependence on $\beta,h$ and note that \eqref{eq:comp1} carries
an extra factor $\tfrac{1}{n}$ because \cite[Chapter 13]{BdH15} considers a discrete-time 
dynamics where at every unit of time a single spin is drawn uniformly at random and is flipped
with a probability that is given by the right-hand side of \eqref{eq:rate r}.) According 
to \cite[Eq.~(13.2.5)--(13.2.6)]{BdH15} we have
\begin{equation}
\label{eq:Indefext}
I_n(a) - I(a) = [1+o_n(1)] \frac{1}{2n} \log \left(\tfrac12 \pi n(1-a^2)\right), \qquad a \in [-1,1],
\end{equation}
so that 
\begin{equation}
\label{eq:comp2}
e^{\beta n[R_n(a)-R(a)]} = [1+o_n(1)]\,\sqrt{\tfrac12\pi n(1-a^2)}, \qquad a \in [-1,1].
\end{equation}
where $R$ is the limiting free energy defined by $R'= -J/2\beta$ (recall \eqref{eq:diffR=J}).
Inserting \eqref{eq:comp2} into \eqref{eq:comp1}, we get
\begin{equation}
\label{eq:comp3}
\mathbb{E}_{\xi}\left[\tau_{A_{\mathbf{S}_n}}\right] = [1+o_n(1)]\,\frac{2}{1-\mathbf{t}}\,
\sqrt{\frac{1-\mathbf{t}^2}{1-\mathbf{m}^2}}\,
e^{\beta n [R(\mathbf{t})-R(\mathbf{m})]}\,\frac{1}{n}\,\mathbf{S}^*_n
\end{equation}
with
\begin{equation}
\label{eq:comp5}
\mathbf{S}^*_n = \sum_{ {a,a' \in \Gamma_n} \atop {|a-\mathbf{t}}|<\epsilon,\,|a'-\mathbf{m}|<\epsilon}
e^{\beta n[R(\mathbf{a})-R(\mathbf{t})] - \beta n[R(a')-R(\mathbf{m})]},
\end{equation}
Finally, according to \cite[Eq.~(13.2.9)--(13.2.11)]{BdH15} we have, with the help of a Gaussian approximation,   
\begin{equation}
\label{eq:comp4}
\lim_{n\to\infty} \frac{1}{n}\,\mathbf{S}^*_n = \frac{\pi}{2\beta \sqrt{[R''(\mathbf{m})[-R''(\mathbf{t})]}}.
\end{equation}
Putting together \eqref{eq:comp3} and \eqref{eq:comp4}, we see how Theorem~\ref{thm: class CW result} 
arises as the correct formula for the Curie-Weiss model.

\medskip\noindent
{\bf 3.}
The above computations are for $\beta,h$ fixed and $p=1$. We need to investigate what changes when
$p \in (0,1)$, $\beta$ is fixed, but $h$ depends on $n$:
\begin{equation}
h_n = h \pm (1+\epsilon)\,\frac{\log (n^{11/6})}{n}.     
\end{equation}
We write $R_n^n$ to denote $R_n$ when $h$ is replaced by $h_n$. In the argument in \cite{BdH15} leading 
up to \eqref{eq:comp1}, the approximation only enters through the prefactor. But since $h_n \to h$ as $n\to\infty$, 
the perturbation affects the prefactor only by a factor $1+o_n(1)$. Since $h$ plays no role in \eqref{eq:Indefext} 
and $R^n_n(a)-R^n(a) = \frac{1}{\beta}[I_n(a)-I(a)]$ (recall \eqref{eq:potential pre} and \eqref{eq:potential}), we get \eqref{eq:comp3} with exponent $\beta n[R^n(\mathbf{t})-R^n(\mathbf{m})]$ and \eqref{eq:comp5} with exponent 
$\beta n[R^n(\mathbf{a})-R^n(\mathbf{t})] - \beta n[R^n(a')-R^n(\mathbf{m})]$. The latter affects the Gaussian 
approximation behind \eqref{eq:comp4} only by a factor $1+o_n(1)$. However, the former leads to an error term
in the exponent, compared to the Curie Weiss model, that equals  
\begin{equation}
\begin{aligned}
&\beta n[R^n(\mathbf{t})-R^n(\mathbf{m})]-\beta n[R(\mathbf{t})-R(\mathbf{m})]
= \beta n \int_{\mathbf{m}}^{\mathbf{t}} d a\, [(R^n)'(a)-R'(a)]\\
&\qquad = \beta n \int_{\mathbf{m}}^{\mathbf{t}} d a\, [-(h_n-h)]
= \beta (\mathbf{t}-\mathbf{m})\,n (h-h_n) = \mp \beta (\mathbf{t}-\mathbf{m})\,(1+\epsilon) \log(n^{11/6}).
\end{aligned}
\end{equation}
The exponential of this equals $n^{\mp \beta (\mathbf{t}-\mathbf{m})\,(1+\epsilon)(11/6)}$, which proves 
Theorem~\ref{thm:metER} with the bound in \eqref{eq:expbd} because $\epsilon$ is arbitrary.

\medskip
\paragraph{\bf Proof of Step (1):} 
This step is a direct application of Lemma \ref{lem: Hitting order}.

\medskip
\paragraph{\bf Proof of Step (2):} 
Let $\xi_{0}\overset{d}{=}\mu_{A_{\mathbf{M}}}$, and recall that $\tau_{\mathbf{M}}$ is the first 
return time of $\xi_t$ to $A_{\mathbf{M}}$ once the initial state $\xi_{0}$ has been left. We want 
to show that $\xi_{\tau_{\mathbf{M}}}\overset{d}{=}\mu_{A_{\mathbf{M}}}$ or, in other words, that 
$\mathbb{P}_{\mu_{A_{\mathbf{M}}}}[\xi_{\tau_{\mathbf{M}}}=\sigma] = \mu_{A_{\mathbf{M}}}
(\sigma)$ for any $\sigma\in A_{\mathbf{M}}$. To facilitate the argument, we begin by 
defining the set of all finite permissible trajectories $\mathscr{T}$, i.e., 
\begin{equation}
\mathscr{T}=\bigcup_{N\in\mathbb{N}}\left\{ \gamma=\left\{ \gamma_{i}\right\} _{i=0}^{N}
\in S_{n}^{N}\colon\left|\left|\gamma_{i}\right|-\left|\gamma_{i+1}\right|\right|=1\,\,
\forall\,0 \leq i \leq N-1\right\}.
\end{equation}
Let $\gamma\in\mathscr{T}$ be any finite trajectory beginning at $\gamma_{0}\in A_{\mathbf{M}}$, 
ending at $\gamma_{|\gamma|-1}=\sigma\in A_{\mathbf{M}}$, and satisfying $\gamma_{i}
\notin A_{\mathbf{M}}$ for $0<i<|\gamma|-1$. Then the probability that $\xi_{t}$ follows 
the trajectory $\gamma$ is given by 
\begin{equation}
\begin{aligned}
\mathbb{P}\left[\xi_{t}\mbox{\text{ follows }}\gamma\right] 
&= \mu_{A_{\mathbf{M}}}\left(\gamma_{0}\right)P\left(\gamma_{0},\gamma_{1}\right)
\times \cdots \times P\left(\gamma_{\left|\gamma\right|-2},\sigma\right)\\
&= \frac{1}{\mu\left(A_{\mathbf{M}}\right)}\mu\left(\gamma_{0}\right)P\left(\gamma_{0},\gamma_{1}\right)
\times \cdots \times P\left(\gamma_{\left|\gamma\right|-2},\sigma\right)\\
&= \frac{1}{\mu\left(A_{\mathbf{M}}\right)}\mu\left(\sigma\right)P\left(\sigma,\gamma_{\left|\gamma\right|-2}\right)
\times \cdots \times P\left(\gamma_{1},\gamma_{0}\right)\\
&= \mu_{A_{\mathbf{M}}}\left(\sigma\right)P\left(\sigma,\gamma_{\left|\gamma\right|-2}\right)
\times \cdots \times P\left(\gamma_{1},\gamma_{0}\right),
\end{aligned}
\end{equation}
where the third line follows from reversibility. Thus, if we let $\mathscr{T}(\sigma)$ be the set of all 
trajectories in $\mathscr{T}$ that begin in $A_{\mathbf{M}}$, end at $\sigma$, and do not visit $A_{\mathbf{M}}$ 
in between, then we get 
\begin{equation}
\begin{aligned}
\mathbb{P}_{\mu_{A_{\mathbf{M}}}}\left[\xi_{\tau_{\mathbf{M}}}=\sigma\right] 
&= \sum_{\gamma\in\mathscr{T}\left(\sigma\right)}\mu_{A_{\mathbf{M}}}\left(\sigma\right)
P\left(\sigma,\gamma_{\left|\gamma\right|-2}\right) \times \cdots \times P\left(\gamma_{1},\gamma_{0}\right)\\
&= \mu_{A_{\mathbf{M}}}\left(\sigma\right)\mathbb{P}_{\sigma}\left[\tau_{\mathbf{M}}<\infty\right]\\
&= \mu_{A_{\mathbf{M}}}\left(\sigma\right),
\end{aligned}
\end{equation}
where we use recurrence and the law of total probability, since the trajectories in $\mathscr{T}(\sigma)$, 
when reversed, give all possible trajectories that start at $\sigma\in A_{\mathbf{M}}$ and return to $A_{\mathbf{M}}$ 
in a finite number of steps. This shows that if $\xi_{t}$ has initial distribution $\mu_{A_{\mathbf{M}}}$, then it also 
has this distribution upon every return to $A_{\mathbf{M}}$. 

We can now define a segment-wise coupling of the trajectory taken by $\xi_{t}$ with the trajectories taken by 
$\xi_{t}^{u}$ and $\xi_{t}^{l}$. First, we define the subsets of trajectories that start and end in particular regions 
of the state space: (i) $\mathscr{T}_{\sigma,L,K}$ is the set of trajectories that start at a particular configuration 
$\sigma$ and end in $A_{K}$ without ever visiting $A_{K}$ or $A_{L}$ in between, for some $K<L$; (ii)
$\mathscr{T}_{\sigma,L,L}$ is the set of trajectories that start at some $\sigma$ and end in $A_{L}$ without
ever visiting $A_{K}$ or $A_{L}$ in between; (iii) $\mathscr{T}_{\sigma,L}$ is the union of the two aforementioned 
sets. In explicit form, 
\begin{equation}
\begin{aligned}
\mathscr{T}_{\sigma,L,K} 
&= \left\{ \gamma\in\mathscr{T}\colon\gamma_{0}=\sigma,\gamma_{\left|\gamma\right|-1}
\in A_{K},K<\left|\gamma_{j}\right|<L\,\,\forall\,0<j<\left|\gamma\right|-1\right\},\\[0.2cm]
\mathscr{T}_{\sigma,L,L} 
&= \left\{ \gamma\in\mathscr{T}\colon\gamma_{0}=\sigma,\gamma_{\left|\gamma\right|-1}
\in A_{L},K<\left|\gamma_{j}\right|<L\,\,\forall\,0<j<\left|\gamma\right|-1\right\},\\[0.2cm]
\mathscr{T}_{\sigma,L} 
& = \mathscr{T}_{\sigma,L,K}\cup\mathscr{T}_{\sigma,L,L}.
\end{aligned}
\end{equation}
By Step (1), for any $\xi_{0}^{l}\in A_{\mathbf{M}^{l}}$ and $\xi_{0}^{u}\in A_{\mathbf{M}^{u}}$,
\begin{equation}
\mathbb{P}_{\xi_{0}^{l}}^{l}\bigl[\mathscr{T}_{\xi_{0}^{l},\mathbf{S}^{l},\mathbf{S}^{l}}\bigr]
\leq\mathbb{P}_{\xi_{0}}\left[\mathscr{T}_{\xi_{0},\mathbf{S},\mathbf{S}}\right]
\leq \mathbb{P}_{\xi_{0}^{u}}^{u}\left[\mathscr{T}_{\xi_{0}^{u},\mathbf{S}^{u},\mathbf{S}^{u}}\right].
\label{eq:comp3probs}
\end{equation}
It is clear that the two probabilities at either end of \eqref{eq:comp3probs} are independent of the 
starting points $\xi_{0}^{l}$ and $\xi_{0}^{u}$. By the argument given above, if for the probability in the 
middle $\xi_{0}\overset{d}{=}\mu_{A_{\mathbf{M}}}$, then each subsequent return to $A_{\mathbf{M}}$ 
also has this distribution. For this reason, we may define a coupling of the trajectories as follows. 

Sample a trajectory segment $\gamma^{l}$ from $\mathscr{T}_{\xi_{0}^{l},\mathbf{S}^{l}}$ for the 
process $\xi_{t}^{l}$. If $\gamma^{l}$ happens to be in $\mathscr{T}_{\xi_{0}^{l},\mathbf{S}^{l},
\mathbf{S}^{l}}$, then by \eqref{eq:comp3probs} we may sample a trajectory segment $\gamma$
from $\mathscr{T}_{\xi_{0},\mathbf{S},\mathbf{S}}$ for the process $\xi_{t}$, and a trajectory segment 
$\gamma^{u}$ from $\mathscr{T}_{\xi_{0}^{u},\mathbf{S}^{u},\mathbf{S}^{u}}$ for the process $\xi^{u}$. 
Otherwise, $\gamma^{l}\in\mathscr{T}_{\xi_{0}^{l},\mathbf{S}^{l},\mathbf{M}^{l}}$, and we independently 
take $\gamma\in\mathscr{T}_{\xi_{0},\mathbf{S},\mathbf{S}}$ with probability $\mathbb{P}_{\xi_{0}}
[\mathscr{T}_{\xi_{0},\mathbf{S},\mathbf{S}}]-\mathbb{P}_{\xi_{0}^{l}}^{l}[\mathscr{T}_{\xi_{0}^{l},
\mathbf{S}^{l},\mathbf{S}^{l}}]$, and $\gamma\in\mathscr{T}_{\xi_{0},\mathbf{S},\mathbf{M}}$ 
otherwise. If $\gamma\in\mathscr{T}_{\xi_{0},\mathbf{S},\mathbf{S}}$, then sample $\gamma^{u}$ 
from $\mathscr{T}_{\xi_{0}^{u},\mathbf{S}^{u},\mathbf{S}^{u}}$. Otherwise $\gamma\in\mathscr{T}_{\xi_{0},
\mathbf{S},\mathbf{M}}$, and so take independently $\gamma^{u}\in\mathscr{T}_{\xi_{0}^{u},
\mathbf{S}^{u},\mathbf{S}^{u}}$ with probability $\mathbb{P}_{\xi_{0}^{u}}^{u}[\mathscr{T}_{\xi_{0}^{u},
\mathbf{S}^{u},\mathbf{S}^{u}}]-\mathbb{P}_{\xi_{0}}[\mathscr{T}_{\xi_{0},\mathbf{S},\mathbf{S}}]$,
and $\gamma^{u}\in\mathscr{T}_{\xi_{0}^{u},\mathbf{S}^{u},\mathbf{M}^{u}}$ with the remaining probability. 
We glue together the sampling of segments leaving and returning to $A_{\mathbf{M}^{l}}/A_{\mathbf{M}}
/A_{\mathbf{M}^{u}}$ with the next sampling of such segments. This results in trajectories for $\xi^{u}$, 
$\xi$, and $\xi^{l}$ that reach $A_{\mathbf{S}^{u}}/A_{\mathbf{S}}/A_{\mathbf{S}^{l}}$, in that particular 
order.

\medskip
\paragraph{\bf Proof of Step (3) and Step (4):} 
These two steps are immediate from Lemma \ref{lem:returntail}.

\end{proof}


\appendix


\section{Conditional average return time for inhomogeneous random walk}
\label{app}

In this appendix we prove Lemma \ref{lem:returntail}. In Appendices \ref{app1}--\ref{app2} we
compute the harmonic function and the conditional average return time for an arbitrary 
nearest-neighbour random walk on a finite interval. In Appendix \ref{app3} we use these
computations to prove the lemma. 
 

\subsection{Harmonic function}
\label{app1}

Consider a \emph{nearest-neighbour} random walk on the set $\{0,\ldots,N\}$ with strictly positive 
transition probabilities $p(x,x+1)$ and $p(x,x-1)$, $0<x<N$, and with $0$ and $N$ acting as 
absorbing boundaries. Let $\tau_0$ and $\tau_N$ denote the first hitting times of $0$ 
and $N$. The \emph{harmonic function} is defined as
\begin{equation}
h_N(x) = \mathbb{P}_x(\tau_N<\tau_0), \quad 0 \leq x \leq N,  
\end{equation}
where $\mathbb{P}_x$ is the law of the random walk starting from $x$. This is the unique 
solution of the recursion relation 
\begin{equation}
h_N(x) = p(x,x+1) h_N(x+1) + p(x,x-1) h_N(x-1), \quad 0<x<N,
\end{equation}
with boundary conditions
\begin{equation}
h_N(0)= 0, \qquad h_N(N) = 1. 
\end{equation}
Since $p(x,x+1)+p(x,x-1)=1$, the recursion can be written as
\begin{equation}
p(x,x+1)[h_N(x+1)-h_N(x)] = p(x,x-1)[h_N(x)-h_N(x-1)].
\end{equation}
Define $\Delta h_N(x) = h_N(x+1)-h_N(x)$, $0 \leq x < N$. Iteration gives
\begin{equation}
\Delta h_N(x) = \pi[1,x]\,\Delta h_N(0), \quad 0 \leq x < N,
\end{equation}
where we define
\begin{equation}
\label{eq:pidef}
\pi(I) = \prod_{z \in I} \frac{p(z,z-1)}{p(z,z+1)}, \quad I \subseteq \{1,\ldots,N-1\},
\end{equation}
with the convention that the empty product equals $1$. Since $h_N(0)=0$, we have
\begin{equation}
h_N(x) = \sum_{z=0}^{x-1} \Delta h_N(z) = \left(\sum_{z=0}^{x-1} \pi[1,z]\right) 
\Delta h_N(0), \qquad 0 < x \leq N.
\end{equation}
Put $C=\Delta h_N(0)$, and abbreviate
\begin{equation}
R(x) = \sum_{z=0}^{x-1} \pi[1,z], \quad 0 \leq x \leq N.
\end{equation}
Since $h_N(N)=1$, we have $C=1/R(N)$. Therefore we arrive at
\begin{equation}
h_N(x) = \frac{R(x)}{R(N)}, \quad 0 \leq x \leq N.
\end{equation}

\begin{remark}
{\rm For simple random walk we have $p(x,x \pm 1) = \tfrac12$, hence $\pi[1,x]=1$ and 
$R(x)=x$, and so
\begin{equation}
h_N(x) = \frac{x}{N}, \quad 0 \leq x \leq N,
\end{equation} 
which is the standard gambler's ruin formula.}
\end{remark}


\subsection{Conditional average hitting time} 
\label{app2}

We are interested in the quantity
\begin{equation}
e_N(x) = \mathbb{E}_x(\tau_N \mid \tau_N<\tau_0), \qquad 0 < x \leq N.
\end{equation} 
The conditioning amounts to taking the \emph{Doob transformed} random walk, which has 
transition probabilities
\begin{equation}
q(x,x \pm 1) =  p(x,x \pm1 )\,\frac{h_N(x \pm 1)}{h_N(x)}. 
\end{equation}
We have the recursion relation
\begin{equation}
e_N(x) = 1 + q(x,x+1) e_N(x+1) + q(x,x-1) e_N(x-1), \quad 0 < x < N, 
\end{equation}
with boundary conditions
\begin{equation}
e_N(N) = 0, \quad e_N(1) = 1 + e_N(2).
\end{equation}
Putting $f_N(x) = h_N(x)e_N(x)$, we get the recursion
\begin{equation}
f_N(x) = h_N(x) + p(x,x+1) f_N(x+1) + p(x,x-1) f_N(x-1),
\quad 0 < x < N,
\end{equation}  
which can be rewritten as 
\begin{equation}
p(x,x+1)[f_N(x+1)-f_N(x)] = p(x,x-1)[f_N(x)-f_N(x-1)] - h_N(x).
\end{equation}
Define $\Delta f_N(x) = f_N(x+1)-f_N(x)$, $0<x<N$. Iteration gives
\begin{equation}
\Delta f_N(x) = \pi(1,x]\,\Delta f_N(1) - \chi(1,x], \quad 0 < x < N,
\end{equation}
with
\begin{equation}
\chi(1,x] = \sum_{y=2}^x \pi(y,x]\,\frac{h_N(y)}{p(y,y+1)}, \quad 0 < x < N, 
\end{equation}
Since $f_N(N)=0$, we have
\begin{equation}
f_N(x) = - \sum_{z=x}^{N-1} \Delta f_N(z) =
\sum_{z=x}^{N-1} \chi(1,z] - \left(\sum_{z=x}^{N-1} \pi(1,z]\right) \Delta f_N(1), 
\quad 0<x<N,
\end{equation}
or
\begin{equation}
e_N(x) = \frac{1}{h_N(x)} \sum_{z=x}^{N-1} \chi(1,z] 
- \frac{1}{h_N(x)} \left(\sum_{z=x}^{N-1} \pi(1,z]\right) \Delta f_N(1),
\quad 0 < x < N.
\end{equation}
Put $C=\Delta f_N(1)$, and abbreviate
\begin{equation}
A(x) = \sum_{z=x}^{N-1} \pi(1,z], \quad B(x) = \sum_{z=x}^{N-1} \chi(1,z],
\quad 0 < x  \leq N.
\end{equation}
Then
\begin{equation}
e_N(x) = \frac{1}{h_N(x)} \bigl[B(x)-C A(x)\bigr].
\end{equation}
Since $e_N(1) = 1 + e_N(2)$, we have
\begin{equation}
C = \frac{[h_N(2)B(1)-h_N(1)B(2)]-h_N(1)h_N(2)}{h_N(2)A(1)-h_N(1)A(2)}.
\end{equation}
Abbreviate
\begin{equation}
\label{eq:form0}
\bar{R}(x) = \sum_{z=0}^{x-1} \pi(1,z], \quad \bar{S}(x) = \sum_{z=0}^{x-1} \chi(1,z],
\quad 0 < x  \leq N.
\end{equation}
Then
\begin{equation}
A(x) = \bar{R}(N)-\bar{R}(x), \quad B(x) = \bar{S}(N)-\bar{S}(x), \quad 0< x < N. 
\end{equation}
Note that $h_N(x) = R(x)/R(N) = \bar{R}(x)/\bar{R}(N)$, because $\pi[1,z] = \pi(1)\pi(1,z]$
and a common factor $\pi(1)$ drops out.  Note further that $\bar{R}(1) = 1$, $\bar{R}(2) = 2$, 
while $\bar{S}(1)=\bar{S}(2)=0$. Therefore
\begin{equation}
C = \frac{\bar{S}(N)}{\bar{R}(N)} - \frac{2}{\bar{R}(N)^2}. 
\end{equation}
Therefore we arrive at
\begin{equation}
\label{eq:SR}
e_N(x) = \bar{S}(N) - \frac{\bar{R}(N)}{\bar{R}(x)}\,\bar{S}(x) + \frac{2}{\bar{R}(x)} - \frac{2}{\bar{R}(N)},
\quad 0 < x \leq N.
\end{equation}
Abbreviating
\begin{equation}
\label{eq:form1}
\bar{T}(x) = \bar{S}(x)\bar{R}(N) = \sum_{z=0}^{x-1} \sum_{y=2}^z \frac{\pi(y,z]}{p(y,y+1)}\,\bar{R}(y),
\quad \bar{U}(x) = \frac{\bar{T}(x)-2}{\bar{R}(x)}, 
\end{equation} 
we can write
\begin{equation}
\label{eq:form2}
e_N(x) = \bar{U}(N)- \bar{U}(x), \quad 0 < x \leq N.
\end{equation}

\begin{remark}
{\rm For simple random walk we have $p(x,x\pm 1) = \frac12$, $\pi(y,z]=1$, $\bar{R}(x)=x$, $\bar{S}(x) 
= \frac{1}{3N}(x^3-7x+6)$ and $\bar{U}(x) = \tfrac13(x^2-7)$, and so
\begin{equation}
e_N(x) = \tfrac13\,(N^2-x^2), \qquad 0 < x \leq N. 
\end{equation}
This is to be compared with the \emph{unconditional} average hitting time $\mathbb{E}_x(\tau) = x(N-x)$, 
$0 \leq x \leq N$, where $\tau = \tau_0 \wedge \tau_N$ is the first hitting time of $\{0,N\}$.} 
\end{remark}


\subsection{Application to spin-flip dynamics} 
\label{app3}

We will use the formulas in \eqref{eq:pidef}, \eqref{eq:form0} and \eqref{eq:form1}--\eqref{eq:form2} 
to obtain an upper bound on the conditional return time to the metastable state. This bound will
be sharp enough to prove Lemma \ref{lem:returntail}. We first do the computation for the complete 
graph (Curie-Weiss model). Afterwards we turn to the Erd\H{o}s-R\'enyi Random Graph (our 
spin-flip dynamics).     


\subsubsection{Complete graph}

We monitor the magnetization of the \emph{continuous-time} Curie-Weiss model by looking at the 
magnetization at the times of the spin-flips. This gives a \emph{discrete-time} random walk on 
the set $\Gamma_n$ defined in \eqref{eq:InJndef}. This set consists of $n+1$ sites. We first consider the 
excursions to the \emph{left} of $\mathbf{m}_n$ (recall \eqref{eq: mts}). After that we consider
the excursions to the \emph{right}.

\medskip\noindent
{\bf 1.} 
For the Curie-Weiss model we have (use the formulas in Lemma \ref{lem: Rate bounds} without
the error terms)
\begin{equation}
\label{eq:CWrates}
\sigma \in A_k\colon \qquad \sum_{\xi \in A_{k+1}} r(\sigma,\xi) = (n-k)\,e^{-2\beta [\theta_k]_+},
\quad \sum_{\xi \in A_{k-1}} r(\sigma,\xi) = k\, e^{-2\beta [-\theta_k]_+},
\end{equation}
where $\theta_k = p(1-\frac{2k}{n})-h$. Hence, the quotient of the rate to move downwards, 
respectively, upwards in magnetization equals   
\begin{equation}
\label{eq:ratiors}
Q(k) = \frac{\sum_{\xi \in A_{k-1}} r(\sigma,\xi) }{\sum_{\xi \in A_{k+1}} r(\sigma,\xi)}
= \frac{k}{n-k}\,e^{2\beta ([\theta_k]_+ - [-\theta_k]_+)}.  
\end{equation}
It is convenient to change variables by writing $k = \frac{n}{2}(a_k+1)$, so that $\theta_k = -pa_k-h$. 
The metastable state corresponds to $k = \mathbf{M}_n = \frac{n}{2}(\mathbf{m}_n+1)$, i.e., 
$a_k=\mathbf{m}_n$. We know from \eqref{eq: mts}--\eqref{eq:InJndef} that $\mathbf{m}_n$ is the 
smallest solution of the equation $J_n(\mathbf{m}_n)=0$ (rounded off by $1/n$ to fall in $\Gamma_n$). 
Hence $\mathbf{m}_n = \mathbf{m} + O(1/n)$ with $\mathbf{m}$ the smallest solution of the 
equation $J_{p,\beta,h}(\mathbf{m})=0$, satisfying $\frac{1-\mathbf{m}}{1+\mathbf{m}} =
e^{-2\beta(p\mathbf{m}+h)}$ (recall \eqref{Jadef}). Hence we can write (for ease of notation we 
henceforth ignore the error $O(1/n)$) 
\begin{equation}
Q(k) = \frac{F(\mathbf{m}_n)}{F(a_k)}, \qquad F(a) = \frac{1-a}{1+a}\,e^{2\beta p a}. 
\end{equation}    
Here, we use that $[\theta_k]_+-[-\theta_k]_+ = \theta_k$, which holds because $0 = R'_{p,\beta,h}
(\mathbf{m}) = -p\mathbf{m}-h+\beta^{-1}I'(\mathbf{m})$ with $I'(\mathbf{m})<0$ because $\mathbf{m}
<0$ (recall \eqref{eq:diffR=J}), so that $-p\mathbf{m}_n-h>0$ for $n$ large enough, which implies that 
also $-pa-h>0$ for all $a<\mathbf{m}_n$ for $n$ large enough. We next note that (recall \eqref{eq:diffR=J} 
and \eqref{eq:Jprime})
\begin{equation}
\label{eq:ratioslope}
\frac{d}{da} \log \left[\frac{F(\mathbf{m}_n)}{F(a)}\right] = -2\left(\beta p-\frac{1}{1-a^2}\right) =
- J_{p,\beta,h}'(a) = 2\beta R''_{p,\beta,h}(a) \geq \delta \quad  \text{for some } \delta>0, 
\end{equation}
where the inequality comes from the fact that $a \mapsto R_{p,\beta,h}(a)$ has a positive curvature that 
is bounded away from zero on $[-1,\mathbf{m}]$ (recall Figure \ref{fig:CWfe}).   

\medskip\noindent
{\bf 2.}
We view the excursions to the left of $\mathbf{m}_n$ as starting from site $N$ in the set $\{0,\ldots,N\}$
with $N = \mathbf{M}_n = \frac{n}{2}(\mathbf{m}_n+1)$. From \eqref{eq:form1}--\eqref{eq:form2}, we get
\begin{equation}
\label{eq:form5}
\begin{aligned}
e_N(x) 
&= \sum_{z=0}^{N-1} \sum_{y=2}^z \frac{\pi(y,z]}{p(y,y+1)}\,\frac{\bar{R}(y)}{\bar{R}(N)}
- \sum_{z=0}^{x-1} \sum_{y=2}^z \frac{\pi(y,z]}{p(y,y+1)}\,\frac{\bar{R}(y)}{\bar{R}(x)}
+ \frac{2}{\bar{R}(N)\bar{R}(x)} [\bar{R}(N)-\bar{R}(x)]\\
&\leq \sum_{z=x}^{N-1} \sum_{y=2}^z \frac{\pi(y,z]}{p(y,y+1)}\,\frac{\bar{R}(y)}{\bar{R}(N)}
+ \frac{2}{\bar{R}(x)}\\
&\leq 2 \sum_{z=1}^{N-1} \sum_{y=2}^z \pi(y,z] + 2.
\end{aligned}
\end{equation}
Here, we use that $p(y,y+1) \geq \tfrac12$ and $1 = \bar{R}(0) \leq \bar{R}(y) \leq \bar{R}(N)$ for 
all $0<y<N$ (recall \eqref{eq:form0} and note that $x \mapsto \bar{R}(x)$ is non-decreasing). The 
bound is independent of $x$. Using the estimate
\begin{equation}
\label{eq:ratioslopealt}
Q(x) = \frac{p(x,x-1)}{p(x,x+1)} \leq e^{-\epsilon(N-x)/N}, \quad 0 < x < N, \quad \text{ for some } 
\epsilon = \epsilon(\delta)>0,
\end{equation}
which comes from \eqref{eq:ratioslope}, we can estimate
\begin{equation}
\pi(y,z] \leq \prod_{x=y+1}^z e^{-\epsilon (N-x)/N}
= \exp\left[ -\epsilon \sum_{x=y+1}^z (N-x)/N \right], \qquad 0 \leq y \leq z <N,
\end{equation} 
from which it follows that
\begin{equation}
\label{eq:sumpi}
\sum_{z=1}^{N-1} \sum_{y=2}^z \pi(y,z]  = O(N/\epsilon), \qquad N\to\infty.
\end{equation}
Thus we arrive at 
\begin{equation}
\label{eq:mean}
e_N(x) = O(N), \qquad N \to \infty, \quad \text{ uniformly in } 0<x<N.
\end{equation}
To turn \eqref{eq:mean} into a tail estimate, we use the Chebyshev inequality: \eqref{eq:mean} 
implies that every $N$ time units there is a probability at least $c$ to hit $N$, for some $c>0$ 
and uniformly in $0 < x < N$. Hence
\begin{equation}
\label{eq:exptail}
\mathbb{P}_x(\tau_N \geq kN \mid \tau_N < \tau_0) \leq (1-c)^k \qquad \forall\, k \in \N_0.
\end{equation}

\medskip\noindent
{\bf 3.} 
For excursions to the right of $\mathbf{m}_n$ the argument is similar. Now $N = \mathbf{T}_n 
- \mathbf{M}_n = \frac{n}{2}(\mathbf{t}_n-\mathbf{m}_n)$ (recall \eqref{eq:MTSdef}), and the 
role of $0$ and $N$ is interchanged. Both near $0$ and near $N$ the drift towards $\mathbf{M}_n$ 
vanishes \emph{linearly} (because of the non-zero curvature). If we condition the random walk 
not to hit $N$, then the average hitting time of $0$ starting from $x$ is again $O(N)$, uniformly 
in $x$.   

\medskip\noindent
{\bf 4.}
Returning from the discrete-time random walk to the continuous-time Curie-Weiss model, we note 
that order $n$ spin-flips occur per unit of time. Since $N \asymp n$ as $n\to\infty$, \eqref{eq:exptail} 
and its analogue for excursions to the right give that, uniformly in $\xi \in A_{\mathbf{M}_n}$,
\begin{equation}
\label{eq:exptailcont}
\mathbb{P}_\xi\left[\tau_{A_{\mathbf{M}_n}} \geq k \mid \tau_{A_{\mathbf{M}_n}} 
< \tau_{A_{\mathbf{T}_n}} \right] \leq e^{-Ck} \qquad \forall\, k \in \N_0.
\end{equation}
for some $C>0$, which is the bound in \eqref{eq:exptailER}.


\subsubsection{Erd\H{o}s-R\'enyi Random Graph}

We next argue that the above argument can be extended to our spin-flip dynamics after taking into 
account that the rates to move downwards and upwards in magnetization are \emph{perturbed by 
small errors}. In what follows we will write $p^{\mathrm{CW}}(x,x \pm 1)$ for the transition probabilities 
in the Curie-Weiss model and $p^{\mathrm{ER}}(x,x \pm 1)$ for the transition probabilities that serve
as \emph{uniform upper and lower bounds} for the transition probabilities in our spin-flip model. 
Recall that the latter actually depend on the configuration and not just on the magnetization, but 
Lemma \ref{lem: Rate bounds} provides us with uniform bounds that allow us to \emph{sandwich} 
the magnetization between the magnetizations of \emph{two perturbed Curie-Weiss models}.      

\medskip\noindent
{\bf 1.}
Suppose that 
\begin{equation}
\label{eq:form6}
\frac{p^{\mathrm{ER}}(x,x-1)}{p^{\mathrm{ER}}(x,x+1)}
= \frac{p^{\mathrm{CW}}(x,x-1)}{p^{\mathrm{CW}}(x,x+1)} \big[1+O(N^{-1/2})].
\end{equation}
Then there exists a $C>0$ large enough such that 
\begin{equation}
\label{eq:form7}
\pi^{\mathrm{ER}}(y,z] \leq C \pi^{\mathrm{CW}}(y,z], \qquad 0 \leq y \leq z <N. 
\end{equation}
Indeed, as long as $z-y \leq C_1 N^{1/2}$ we have the bound in \eqref{eq:form7} (with $C$ depending
on $C_1$). On the other hand, if $z-y > C_1 N^{1/2}$ with $C_1$ large enough, then \emph{the drift of 
the Curie-Weiss model sets in and overrules the error}: recall from \eqref{eq:ratioslopealt} that the drift 
at distance $N^{1/2}$ from $N$ is of order $N^{1/2}/N = N^{-1/2}$. It follows from \eqref{eq:form7} 
that \eqref{eq:sumpi}--\eqref{eq:exptail} carry over, with suitably adapted constants, and hence so
does \eqref{eq:exptailcont}.

\medskip\noindent
{\bf 2.}
To prove \eqref{eq:form6}, we must show that \eqref{eq:ratiors} holds up to a multiplicative error 
$1+O(n^{-1/2})$. In the argument that follows we assume that $k$ is such that $\theta_k \geq 
\delta$ for some fixed $\delta>0$. We comment later on how to extend the argument to other
$k$ values. Recall that $\theta_k = -pa_k-h$ and that $\theta_k \geq \delta>0$ for all $a_k \in 
[-1,\mathbf{m}]$ for $n$ large enough.

\medskip\noindent
{\bf 3.}
Let $\sigma \in A_k$ and $\sigma^{v}\in A_{k-1}$, where $\sigma^v$ is obtained from 
$\sigma$ by flipping the sign at vertex $v \in \sigma$ from $+1$ to $-1$. Write the transition 
rate from $\sigma$ to $\sigma^v$ as
\begin{equation}
\begin{aligned}
r(\sigma,\sigma^v) &= \exp\left( -\beta \left[ 2p(\tfrac{2k}{n}-1)+2h
+ \tfrac{2}{n}\big(\epsilon(\sigma,v)-\epsilon(\overline{\sigma},v)\big)\right]_{+}\right)\\
&= \exp\left( -2\beta \left[ -\theta_k
+ \tfrac{1}{n}\big(\epsilon(\sigma,v)-\epsilon(\overline{\sigma},v)\big)\right]_{+}\right).
\end{aligned}
\end{equation}
Here, $2p(\tfrac{2k}{n}-1)=\frac{2}{n}p[k-(n-k)]$ equals $\frac{2}{n}$ times the average under 
$\mathbb{P}_{\ER_n(p)}$ of $E(\sigma,v) - E(\overline{\sigma},v)$, with $E(\sigma,v)$ the 
number of edges between the support of $\sigma$ and $v$ and $E(\overline{\sigma},v)$ the
number of edges between the support of $\overline{\sigma}$ and $v$ (recall the notation in 
Definition \ref{def:notsets}), and $\epsilon(\sigma,v)-\epsilon(\overline{\sigma},v)$ is an 
\emph{error term} that arises from deviations of this average. Since $-\theta_k \leq -\delta$, 
the error terms are \emph{not seen except} when they represent a large deviation of size at least 
$\delta n$. A union bound over all the vertices and all the configurations, in combination with 
Hoeffding's inequality, guarantees that, with $\mathbb{P}_{\ER_n(p)}$-probability tending 
to $1$ as $n\to\infty$, for any $\sigma$ there are at most $(\log 2)/2\delta^2 = O(1)$ many 
vertices that can lead to a large deviation of size at least $\delta n$. Since $r(\sigma,\sigma^v) 
\leq 1$, we obtain
\begin{equation}
\sum_{v\in\sigma} r(\sigma,\sigma^v) =  O(1) + [n-k-O(1)]\,e^{-2\beta [-\theta_k]_+}. 
\end{equation} 
This is a refinement of \eqref{eq:frate}.

\medskip\noindent
{\bf 4.}
Similarly, let $\sigma \in A_k$ and $\sigma^v \in A_{k+1}$, where $\sigma^v$ is obtained from 
$\sigma$ by flipping the sign at vertex $v \notin \sigma$ from $-1$ to $+1$. Write the transition 
rate from $\sigma$ to $\sigma^v$ as
\begin{equation}
\begin{aligned}
r(\sigma,\sigma^v) 
&= \exp\left( -\beta \left[2p(1-\tfrac{2k}{n}) - 2h 
+ \tfrac{2}{n}\big(\epsilon(\overline{\sigma},v)-\epsilon(\sigma,v)\big)\right]_{+}\right)\\
&= \exp\left( -2\beta \left[\theta_k 
+ \tfrac{1}{n}\big(\epsilon(\overline{\sigma},v)-\epsilon(\sigma,v)\big)\right]_{+}\right).
\end{aligned}
\end{equation}
We \emph{cannot remove} $[\cdot]_{+}$ when the error terms represent a large deviation of order 
$\delta n$. By the same argument as above, this happens for all but $(\log 2)/2\delta^2 = O(1)$ 
many vertices $v$. For all other vertices, we \emph{can remove} $[\cdot]_{+}$ and write 
\begin{equation}
\label{eq:rsplit}
r(\sigma,\sigma^v) = e^{-2\beta \theta_k} 
\,\exp\left( \tfrac{1}{n} \big(\epsilon(\overline{\sigma},v)-\epsilon(\sigma,v)\big) \right).
\end{equation}
Next, we sum over $v$ and use the inequality, valid for $\delta$ small enough, 
\begin{equation}
e^{-(1+\delta) \tfrac{1}{M} |\sum_{i=1}^M a_i|} \leq \tfrac1M \sum_{i=1}^M e^{a_i} 
\leq e^{(1+\delta) \tfrac{1}{M} |\sum_{i=1}^M a_i|} 
\qquad \forall\,0 \leq |a_i| \leq \delta,\,\, 1 \leq i \leq M.
\end{equation}
This gives
\begin{equation}
\sum_{v \notin \sigma} r(\sigma,\sigma^v) = O(1) + [k-O(1)]\,e^{-2\beta \theta_k}\,e^{O(|S_n|)},
\qquad S_n = \frac{1}{[k-O(1)]} \frac{1}{n} \sum_{v \notin \sigma} 
\big(\epsilon(\overline{\sigma},v)-\epsilon(\sigma,v)\big).   
\end{equation}
We know from Lemma \ref{lem: degree bound} that, with $\mathbb{P}_{\ER_n(p)}$-probability tending 
to $1$ as $n\to\infty$,
\begin{equation}
|S_n| \leq \frac{c n^{3/2}}{[k-O(1)]n} \qquad \forall\,c > \sqrt{\tfrac{1}{8}\log 2}.
\end{equation}
Since we may take $k \geq \frac{n}{3}(p-h)$ (recall \eqref{eq:frate at end alt}), we obtain
\begin{equation}
\sum_{v \notin \sigma} r(\sigma,\sigma^v) = O(1) + [k-O(1)]\,e^{-2\beta \theta_k}\,e^{O(n^{-1/2})}.
\end{equation}
This is a refinement of \eqref{eq: brate}.

\medskip\noindent
{\bf 5.}
The same argument works when we assume that $k$ is such that $\theta_k \leq -\delta$ for some 
fixed $\delta>0$: simply reverse the arguments in Steps 3 and 4. It therefore remains to explain
what happens when $\theta_k \approx 0$, i.e., $a_k \approx -\frac{h}{p}$. We then see from 
\eqref{eq:diffR=J} that $R'_{p,\beta,h}(a_k) \approx \beta^{-1} I'(a_k) < 0$, so that $a_k$ lies
in the interval $[\mathbf{t},0]$, which is \emph{beyond} the top state (recall Fig.~\ref{fig:CWfe}).


\end{document}